\theoremstyle{thmstyleone}%
\newtheorem{theorem}{Theorem}%  meant for continuous numbers
\theoremstyle{thmstyletwo}%
\newtheorem{remark}{Remark}%
\newtheorem{lemma}{Lemma}%
\newtheorem{corollary}{Corollary}%
\theoremstyle{thmstylethree}%
\renewcommand{\v}[1]{\bm{#1}}
\newcommand{\pars}[1]{\left( #1 \right)}%
\newcommand{\braces}[1]{\left\{ #1 \right\}}%
\newcommand{\conj}[1]{\overline{#1}}
\newcommand{\reals}{\mathbb{R}}
\newcommand{\CC}{\mathbb{C}}
\newcommand{\ZZ}{\mathbb Z}
\newcommand{\zplus}{{\mathbb Z}^+}
\newcommand{\Surf}{S}
\newcommand{\opint}{\mathcal{I}}
\newcommand{\opintTh}{\opint[\Theta_p](\v x)}
\newcommand{\opquadsym}{\operatorname{Q}}
\newcommand{\opremsym}{\operatorname{E}}
\newcommand{\oprem}{\opremsym_{n}}
\newcommand{\opremTh}{\oprem [\Theta_p](\v x)}
\newcommand{\opquadt}{\opquadsym_{t,n_t}}
\newcommand{\opremt}{\opremsym_{t,n_t}}
\newcommand{\opquadphi}{\opquadsym_{\varphi,n_\varphi}}
\newcommand{\opremphi}{\opremsym_{\varphi,n_\varphi}}
\newcommand{\opremtphi}{\opremsym^2_{n_t,n_{\varphi}}}
\renewcommand{\Im}{\operatorname{Im}}
\newcommand{\est}{\operatorname{est}}
\newcommand{\expcoord}{\v C}
\newcommand{\expcoordlm}{\expcoord_{\ell}^m}
\newcommand{\atan}{\operatorname{atan2}}
\newcommand{\gammaaxisym}{\v\gamma^{\circ,A}}
\newcommand{\geomfac}[1]{G_{\v\gamma, #1}}
\newcommand{\Eest}{E_{\v\gamma}^{EST}}
\newcommand{\EQQ}{E_{\v\gamma}^{Q}}
\newcommand{\EQwarg}{\EQQ(f,p,n_t,n_\varphi,\v x)}
\newcommand{\Eestwarg}{\Eest(f,p,n_t,n_\varphi,\v x)}
\newcommand{\wtz}{w^{TZ}}
\newcommand{\wgl}{w^{GL}}
\newcounter{estno}
\newtheorem{estimate}[estno]{Error estimate}
\crefname{estimate}{Error estimate}{Error estimates}
\begin{document}

\title[C. Sorgentone and A.-K. Tornberg]{Estimation of quadrature errors for layer potentials
    evaluated near surfaces with spherical topology}

%%=============================================================%%
%% Prefix	-> \pfx{Dr}
%% GivenName	-> \fnm{Joergen W.}
%% Particle	-> \spfx{van der} -> surname prefix
%% FamilyName	-> \sur{Ploeg}
%% Suffix	-> \sfx{IV}
%% NatureName	-> \tanm{Poet Laureate} -> Title after name
%% Degrees	-> \dgr{MSc, PhD}
%% \author*[1,2]{\pfx{Dr} \fnm{Joergen W.} \spfx{van der} \sur{Ploeg} \sfx{IV} \tanm{Poet Laureate} 
%%                 \dgr{MSc, PhD}}\email{iauthor@gmail.com}
%%=============================================================%%

\author*[1]{\fnm{Chiara} \sur{Sorgentone}}\email{chiara.sorgentone@uniroma1.it}

\author[2]{\fnm{Anna-Karin} \sur{Tornberg}}\email{akto@kth.se}

\affil*[1]{\orgdiv{Department of Basic and Applied Sciences for Engineering}, \orgname{Sapienza University of Rome}, \orgaddress{\country{Italy}}}

\affil[2]{\orgdiv{Department of Mathematics}, \orgname{KTH Royal Institute of Technology}, \orgaddress{\city{Stockholm}, \country{Sweden}}}

%%==================================%%
%% sample for unstructured abstract %%
%%==================================%%

\abstract{Numerical simulations with rigid particles, drops or vesicles
constitute some examples that involve 3D objects with spherical
topology. When the numerical method is based on boundary integral
equations, the error in using a regular quadrature rule to
approximate the layer potentials that appear in the formulation will
increase rapidly as the evaluation point approaches the surface and
the integrand becomes sharply peaked. To determine when the accuracy
becomes insufficient, and a more costly special quadrature method
should be used, error estimates are needed. 
In this paper we present quadrature error estimates for layer
potentials evaluated near surfaces of genus 0, parametrized using a
polar and an azimuthal angle, discretized by a combination of the
Gauss-Legendre and the trapezoidal quadrature rules. The error estimates involve
no unknown coefficients, but complex valued roots of a specified
distance function. The evaluation of the error estimates in general requires a
one dimensional local root-finding procedure, but for specific
geometries we obtain analytical results. Based on these explicit
solutions, we derive simplified error estimates for layer potentials
evaluated near spheres; these simple formulas depend only on the
distance from the surface, the radius of the sphere and the number of
discretization points. The usefulness of these error estimates is
illustrated with numerical examples.}

\keywords{Error estimate, Layer potentials, Close evaluation, Quadrature, Nearly singular, Spherical topology, Gaussian grid}

%%\pacs[JEL Classification]{D8, H51}

%%\pacs[MSC Classification]{35A01, 65L10, 65L12, 65L20, 65L70}

\maketitle

\section{Introduction}

We consider a generic layer potential over a regular surface $S \subset \reals^3$,
\begin{align}
  u(\v x) = \int_{\Surf} \frac{k \pars{\v x, \v y}\sigma(\v y)
  }{\norm{\v y - \v x}^{2p}} \dif S(\v y) ,
\label{eq:layerpot_S2}
\end{align}
where $2p\in \zplus$ and the evaluation (or target) point $\v x \in \reals^3$ is
allowed to be close to, but not on,  $S$.
The functions $k \pars{\v x, \v y}$ and $\sigma(\v y)$ as
well as $S$ are assumed to be smooth.
When $\v x$ is close to $S$, the integrand will be peaked around the
point on $S$ closest to $\v x$, implying that, while the integral is well defined
analytically, it is difficult to resolve well numerically.

In paper \cite{AFKLINTEBERG20221}, we derived estimates for the
numerical errors that result when applying quadrature rules to such
layer potentials. Specifically, we considered the panel based
Gauss-Legendre quadrature rule and the global trapezoidal rule. The
estimates that was derived have no unknown coefficients and can be
efficiently evaluated given the discretization of the surface. The
evaluation involves a local one-dimensional root finding procedure. In
numerical experiments, we have found the estimates to be both
sufficiently precise and computationally cheap to be practically
useful. This means that they can be used to determine when the regular
quadrature is insufficient for a required accuracy, and hence when a
more costly special quadrature method must be invoked.  In deriving
these estimates, we assumed that the local (for Gauss-Legendre) or
global (for trapezoidal rule) surface parametrization is such that
the map between the parameter space and the surface coordinates is
one-to-one.

%This implies that the first partial derivatives with respect to the
%two parameterization variables do not vanish at any point.

For surfaces of genus $0$, topologically equivalent to a sphere, it is
quite common to use a global parametrization in two angles,
i.e. spherical coordinates.  At the {\em poles} of such a
parametrization, the parameter to surface coordinate map is not
one-to-one. Here, the derivative of the surface coordinate with
respect to the azimuth angle vanish, and with it, the surface area
element. For such parametrizations, the previously derived error
estimates cannot be directly used for all evaluation points $\v x$. 

It is quite common in applications of boundary integral equations to
discretize surfaces with parametrizations based on spherical
coordinates, and to that attach a Gaussian grid, i.e. a discretization with a Gauss-Legendre
quadrature rule in the polar angle (or a mapping of the polar angle)
and the trapezoidal rule in the periodic azimuthal angle. This has been used for simulations of Stokes flow with solid particles in e.g. \cite{AfKlinteberg2016qbx,Corona2017504}, and with drops and vesicles \cite{Sorgentone2018167,Sorgentone2022,Rahimian2015766,Veerapaneni2016278}.
In the latter
case, the deformable shapes are represented by spherical harmonics
expansions. 
%%Johnstons sinh?? 

In this paper, we consider the case of a general smooth surface of
genus $0$,
parametrized using a polar and an azimuthal angle, discretized by a
combination of the Gauss-Legendre quadrature rule and the trapezoidal
rule as described above. Before we introduce the contributions of this
paper, we will briefly describe the derivations of the estimates
in paper \cite{AFKLINTEBERG20221}, and the previous results that enabled that work. 

%closed manifold genus 0 surfaces

Consider the following simple integrals
\begin{equation}
\int_{E} \frac{1}{\left( (t-a)^2+b^2 \right)^p} \, dt = \int_{E}
\frac{1}{\left(t-z_0 \right)^p \left(t-\bar{z}_0 \right)^p} \, dt,
\label{eq:cartesian_int_basic}
\end{equation}
with $z_0=a+ib$, $a,b \in
\reals$, $b > 0$, $p\in \zplus$, and
\begin{equation}
\int_{E} \frac{1}{\left(t-z_0 \right)^p} \, dt, \quad z_0=a+ib, \quad a,b \in
\reals, b \ne 0, p\in \zplus,
\label{eq:complex_int}
\end{equation}
over a basic interval $E$, e.g. $[-1,1]$ for Gauss-Legendre and
$[0,2\pi)$ for the trapezoidal rule.
If $b$ is small the integrands have two poles/one pole close to the
integration interval along the real axis.
The theory of Donaldson and Elliott \cite{Donaldson1972} 
defines the quadrature error as a contour integral in the complex
plane over the integrand multiplied with a so-called remainder
function, that depends on the quadrature rule.
Elliott et al. \cite{Elliott2008} derived error estimates for the error in the approximation of
(\ref{eq:cartesian_int_basic}) with an $n$-point
Gauss-Legendre quadrature rule. To estimate the contour
integral, they used residue calculus for $p=1$ and branch cuts for
$0<p<1$. 
In \cite{AfKlinteberg2016quad},  af Klinteberg and Tornberg derived
error estimates for both (\ref{eq:cartesian_int_basic}) and (\ref{eq:complex_int}) for the
Gauss-Legendre quadrature rule, for any $p \in \zplus$. Corresponding
results were derived also for the trapezoidal rule, but for
integration over the unit circle. Previous studies on the trapezoidal rule include the survey by Trefethen and Weideman \cite{Trefethen2014}, and the error bound provided by Barnett in \cite{Barnett2014} for the quadrature error in evaluating the harmonic double layer potential.

In \cite{AfKlinteberg2018}, a key step was taken to accurately
estimate the quadrature errors for the Gauss-Legendre quadrature rule
in the approximation of layer potentials in 2D, written in complex
form. Introducing a parametrization of a smooth curve segment,
$\gamma(t) \in \CC$, a typical form of an integral to evaluate is
\begin{equation}
\int_E \frac{f(t) \, \gamma'(t)}{(\gamma(t)-z_0)^p} \, dt.
\label{eq:complex_int_curved}
\end{equation}
As compared to the estimates for the simple complex integral
(\ref{eq:complex_int}) above, the estimates derived for this integral
require the knowledge of $t_0 \in \CC$ such that $\gamma(t_0)=z_0$. In
practice, given the Gauss-Legendre points used to discretize the
panel, a numerical procedure is used to compute $t_0$. 
Note that not all layer potentials in 2D can be written in this form.
Using the same techniques, remarkably accurate error estimates were
derived also for layer potentials for the Helmholtz and Stokes
equations, in \cite{AfKlinteberg2018} and \cite{Palsson2019},
respectively.

Now, let $S$ in  (\ref{eq:layerpot_S2}) be a curve $\Gamma \in
\reals^2$,  $\Gamma=\gamma(E)$, $E  \subset \reals$. 
We can then write the layer potential in (\ref{eq:layerpot_S2}) in the equivalent form
\begin{align}
  u(\v x) = \int_E \frac{k \pars{\v x, \v \gamma(t)}\sigma(\v\gamma(t)) }{\norm{\v\gamma(t)-\v x}^{2p}} \norm{\v\gamma'(t)} \dif t
  = \int_E \frac{f(t) \dif t}{\norm{\v\gamma(t)-\v x}^{2p}}.
\label{eq:base_integral}
\end{align}
In the last step all the components that are
assumed to be smooth have been collected in the function $f(t)$, which has an implicit
dependence on $\v x$.
The location of the poles are in this case given by each $t_0 \in \CC$
such that the denominator is zero. In \cite{AFKLINTEBERG20221}, error estimates were
derived for such integrals, both for the Gauss-Legendre and the
trapezoidal quadrature rules. To evaluate these estimates, the
pair of the complex conjugate roots $\{t_0,\bar{t_0}\}$ closest to integration interval is needed, and
is in practice found through a numerical root-finding procedure. The
estimates will be stated in Section \ref{sec:basictheory}. 

A key observation in \cite{AFKLINTEBERG20221} was that the error estimates for the
numerical approximation of (\ref{eq:base_integral}) can be derived the
same way for  $\Gamma \in \reals^d$, $d=2,3$. The only difference is
that $R^2(t,x)=\norm{\v\gamma(t)-\v x}^{2}$ will have three additive
terms instead of two. Starting with the curve estimates in $\reals^3$,
error estimates for the prototype layer potential
(\ref{eq:layerpot_S2}) were derived. 

With $S$ a two-dimensional surface in $\reals^3$, parametrized by
  $\v \gamma: E \rightarrow \reals^3$, $E=\left\{ E_1 \times E_2 \right\}
  \subset \reals^2$, (\ref{eq:layerpot_S2}) takes the form
\begin{align}
  u(\v x) = \iint_E 
  \frac{k \pars{\v x, \v \gamma(t,\varphi)}\sigma(\v\gamma(t,\varphi)) }
  {\norm{\v\gamma(t,\varphi)-\v x}^{2p}} 
  \norm{\dpd{\v\gamma}{t} \times \dpd{\v\gamma}{\varphi}}
  \dif \varphi \dif t
  = \iint_E \frac{f(t,\varphi) \dif \varphi \dif t}{\norm{\v\gamma(t,\varphi)-\v x}^{2p}}.
\label{eq:3Dpot}
\end{align}
All the components that are assumed to be
smooth have here been collected in $f(t,\varphi)$ which depends implicitly on $\v x$. 
In \cite{AFKLINTEBERG20221} error estimates were derived for the numerical approximation
of (\ref{eq:3Dpot}) by composite Gauss-Legendre quadrature or global trapezoidal
quadrature. Numerical examples were shown with tensor product
quadrature rules based on Gauss-Legendre quadrature for surface
discretizations of quadrilateral patches, and the tensor product
trapezoidal rule for global discretizations. 

\section{Contributions and outline}
\label{sec:section2}
In this paper, we will discuss the generalization of the results of
paper \cite{AFKLINTEBERG20221} to the case of smooth surfaces topologically equivalent to a
sphere, 
parametrized by 
  $\v \gamma^\circ: U \rightarrow \reals^3$, where 
$U=\left\{ (\theta,\varphi)\in [0,\pi]\times [0,2\pi) \right\}$.

A generic such surface can be represented by
\begin{align}
\gamma^\circ(\theta,\varphi)=\sum_{\ell=0}^{\infty} \sum_{m=-\ell}^{\ell}
  \expcoordlm Y_{\ell}^m(\theta,\varphi),
\label{eq:gamma_sph_harm}
\end{align}
where  $\expcoordlm \in \CC^3$ and $Y_{\ell}^m(\theta,\varphi)$ is the spherical harmonic function of
degree $\ell$ and order $m$.

The surface $\gamma(t,\varphi)$ in (\ref{eq:3Dpot}) can then be defined as 
\begin{align}
  \gamma(t,\varphi)=\gamma^{\circ}(\theta(t),\varphi),
  \label{eq:gammast}
 \end{align}
 where $t \in [-1,1]$ and $\varphi \in [0,2\pi)$. With this
 parametrization, we can naturally discretize the integral using a $n_{t}$-point
Gauss-Legendre quadrature rule in the $t$ coordinate, and a $n_{\varphi}$
point trapezoidal rule in the periodic $\varphi$ coordinate. 
We will consider two different maps $\theta(t):[-1,1]\rightarrow [0,\pi]$: a simple linear
scaling 
\begin{equation}
\label{eq:linear_map}
\theta(t)=(t+1)\pi/2,
\end{equation} 
and a non-linear one
\begin{equation}
\label{eq:cos_map}
%%\theta(t)=\cos^{-1}(t).
\theta(t)=\cos^{-1}(-t)=\pi-\cos^{-1}(t).
\end{equation}
The inverse mappings are $t(\theta)=-1+2\theta/\pi$ and
$t(\theta)=-\cos(\theta)$, respectively. With both mappings, $t=-1$
corresponds to $\theta=0$.

%We will consider the discretization of
%\begin{align}
%\gamma(s,t)=\gamma^{\circ}(\theta(t),s),
% \end{align}
%where $t \in [-1,1]$ and $s \in [0,2\pi)$, by $n_{GL}$
%Gauss-Legendre points in the $t$ coordinate, and an equidistant
%discretization with $n_{TZ}$ points in the $s$-direction. 

In section \ref{sec:basictheory}, we introduce the error
estimates derived in \cite{AFKLINTEBERG20221} for the integral over a
curve in $\reals^2$ or $\reals^3$ (\ref{eq:base_integral}). In section
\ref{sec:surface_estimates} we then introduce the extension to a
surface of genus 0 in $\reals^3$ for our specific discretizations,
based on what was done in paper \cite{AFKLINTEBERG20221}.
In section \ref{sec:analytical} we derive analytical results for
axisymmetric surfaces. We also show how, based on the explicit
knowledge of the roots, it is possible to derive simplified error
estimates for layer potentials evaluated near spheres. In section
\ref{sec:numeval}, we discuss how to numerically evaluate the
estimates for a general surface with spherical topology, and in
section \ref{sec:numer-exper} we show how the error estimates perform
on different numerical examples. 

% Need root in phi given theta, and the opposite.
% Say, in general numerical procedure.
% 

\section{Quadrature error estimates for curves in $\reals^2$ and $\reals^3$}
\label{sec:basictheory}
Let us introduce the base interval $E$, which for the Gauss-Legendre
quadrature will be $[-1, 1]$ and for the trapezoidal rule $[0, 2\pi)$.
Consider an integral over such a base interval
\begin{equation}
\opint[g]=\int_{E} g(t) \, dt, 
\end{equation}
and an $n$-point quadrature rule with quadrature nodes $\{t_{\ell}\}_{\ell=1}^n$ and
corresponding quadrature  weights $\{w_{\ell}\}_{\ell=1}^n$ to approximate it, 
\begin{equation}
Q_n[g]=\sum_{\ell=1}^n g(t_{\ell}) w_{\ell}.
\end{equation}
The error 
\begin{equation}
E_n[g]=\opint[g]-Q_n[g],
\end{equation}
as a function of $n$ will depend on the function $g$ and the specific
quadrature rule. 
We will consider closed curves for the trapezoidal rule and open
curves (segments) with the Gauss-Legendre quadrature rule.

We now introduce the squared distance function for a curve in
$\reals^d$ ($d=2$ or $3$), to an evaluation point
$\v x \in \reals^d$,
\begin{align}
  R^2(t,\v x) \coloneqq \norm{\v\gamma(t)-\v x}^{2} 
  = \sum_{i=1}^d (\gamma_i(t)-x_i)^2.
  \label{eq:R2_def}
\end{align}
We will later evaluate this function also for $t\in \CC$, in which case we
will use the right most expression. This expression can
then evaluate as a complex number, and will no longer be a norm. 
Our integral of interest (\ref{eq:base_integral}) can be written in the form
\begin{align}
 %% I[\Theta_p](\v x) =
\opintTh = \int_E \Theta_p(t,\v x) \dif t, \qquad
\Theta_p(t,\v x)=\frac{f(t)}{\pars{R^2(t,\v x)}^p},
\label{eq:cartesian_int}
\end{align}
and we want to estimate  $\opremTh$.

As $\v x$ is not on $\v\gamma(t)$, we have $R^2(t,\v x)>0$ for
$t \in E$.
There will however be complex conjugate pairs of roots to $R^2(t,\v
x)$, since $R^2(t,\v x)$ is real for real $t$.
%There will however be complex roots to $R^2$. They will come in complex
%conjugate pairs since 
Let $\braces{t_0, \conj t_0}$ be the pair closest to
$E$, s.t.
\begin{align}
  R^2(t_0,\v x) = R^2(\conj t_0, \v x) = 0.
\end{align}
Under the assumption that $f$ is smooth, the region of analyticity of
$\Theta_p(t,\v x)$ is bounded by these roots. We will henceforth refer to
them both as roots (of $R^2$) and
singularities (of the integrand).
They are in most applications not
known a priori, but can be found numerically for a given target point
$\v x$ (see section \ref{sec:rootfinding}). 

The quadrature error $\opremTh$ can, following Donaldson and Elliott \cite{Donaldson1972}, 
be written as a contour integral in the complex
plane over the integrand $\Theta_p(t,\v x)$ multiplied with a so-called remainder
function, that depends on the quadrature rule.
If $p$ is an integer, $t_0$ and $\bar{t_0}$ are $p$th order
poles of the integrand.  If $p$ is a half-integer, the integrand has
branch points at these singularities. 

An important step in the derivation leading up to estimates of
the quadrature error in \cite{AFKLINTEBERG20221}, is to divide and multiply the integrand
with the factor $t-w$, where $w\in \CC$ is the singularity at the branch
begin considered.  This yields the singularity $(t-w)^{-1}$ to consider in the
complex plane, and introduces what is denoted the geometry factor. 

The \emph{geometry factor} $G$ is, for an evaluation point $\v x \in
\reals^d$ and $w \in \CC$ a root of $R^2$, 
defined as
\begin{align}
  G(w,\v x) = 
  \lim_{t \to w} \frac{t-w}{R^2(t,\v x)}
  =
  \left(2\pars{\v\gamma(w)-\v x}\cdot\v\gamma'(w) \right)^{-1}
  . \label{eq:geometry_factor}
\end{align}
With these definitions, we are ready to state the error estimates from
\cite{AFKLINTEBERG20221}, for both the trapezoidal rule and the Gauss-Legendre quadrature
rule. 

\begin{estimate}[Trapezoidal rule]
  \label{est:trapz_p}
  Consider the integral in (\ref{eq:cartesian_int})
for an evaluation point $\v x \in \reals^d$, 
  with $2p \in \zplus$, where $\gamma(E)$ is the parametrization
  of a smooth closed curve in $\mathbb R^d$
where $d=2$ or $3$.
The integrand is assumed to be periodic in $t$ over the integration interval  $E=[0,2\pi)$.
The error in approximating the integral with the $n$-point trapezoidal
rule can in the limit $n \rightarrow \infty$ be estimated as 
 \begin{align}
  \abs{\opremTh} 
 \approx  
    \frac{4\pi n^{p-1} }{\Gamma(p)}  \abs{f(t_0)} \abs{G(t_0,\v x)}^p e^{-n \abs{\Im{t_0}}}.
    \label{eq:trapz_p}
  \end{align}
Here, $\Gamma(p)$ the gamma function, and the geometry
factor $G$ is defined in  (\ref{eq:geometry_factor}). 
The squared distance function is defined in (\ref{eq:R2_def}), and 
$\braces{t_0, \conj t_0}$ is the pair of complex conjugate roots of this
$ R^2(t,\v x)$ closest to the integration interval $E$. 
\end{estimate}

\begin{estimate}[Gauss-Legendre rule]
  \label{est:gl_p}
 Consider the integral in (\ref{eq:cartesian_int})
for an evaluation point $\v x \in \reals^d$, 
  with $2p \in \zplus$, where $\gamma(E)$ is the parametrization
  of a smooth closed curve in $\mathbb R^d$
 $d=2$ or $3$, with $E=[-1,1]$.
  The error in approximating the integral with the $n$-point
  Gauss-Legendre rule can in the limit $n \rightarrow \infty$ be
  estimated as
%  \begin{align}
 %   \abs{\opremTh} \approx     
 %   \frac{4\pi}{ \Gamma(p)}   
 %   (2n+1)^{p-1} \frac{\abs{f(t_0)}  \abs{G(t_0)}^p}{\abs{\sqrt{t_0^2-1}}^{p-1}}   
 %   \frac{1}{\abs{t_0 + \sqrt{t_0^2-1}}^{2n+1}}.
 %   \label{eq:gl_p}
%  \end{align}
\begin{align}
    \abs{\opremTh} \approx     
    \frac{4\pi}{ \Gamma(p)}   
    (2n+1)^{p-1} \abs{f(t_0)}  \abs{G(t_0,\v x)}^p     
    \frac{   \abs{\sqrt{t_0^2-1}}^{1-p}       }{\abs{t_0 + \sqrt{t_0^2-1}}^{2n+1}},
    \label{eq:gl_p}
\end{align}
where $\sqrt{z^2-1}$ is defined as $\sqrt{z+1}\sqrt{z-1}$ with $-\pi <
\arg(z \pm 1) \le \pi$.
  Here, $\Gamma(p)$ the gamma function, the geometry
factor $G$ is defined in  (\ref{eq:geometry_factor}), 
the squared distance function is defined in (\ref{eq:R2_def}), and 
$\braces{t_0, \conj t_0}$ is the pair of complex conjugate roots of this
$ R^2(t,\v x)$ closest to the integration interval $E$. 
\end{estimate}

In paper \cite{AFKLINTEBERG20221}, each estimate was written as two different estimates, one for
positive integers $p$, and one for positive half-integers $p$. The
derivation of the two estimates follows a different path, using residue
calculus and branch cuts, respectively. However, using
$(p-1)!=\Gamma(p)$ for integer $p$ the two estimates can both be written in the form
given above. 

\section{Quadrature errors near two-dimensional surfaces in $\reals^3$}
\label{sec:surface_estimates}

Let us now consider the three-dimensional case, and the prototype
layer potential (\ref{eq:3Dpot}). Here, $S \subset \mathbb R^3$ is a two-dimensional
surface parametrized by $\v\gamma : E \to \mathbb R^3$,
$E = \{E_1 \times E_2\} \subset \mathbb R^2$. As introduced in
(\ref{eq:gammast}), we will specifically consider $\v\gamma(t,\varphi)$ 
where  $t \in E_1=[-1,1]$ and $\varphi \in E_2=[0,2\pi)$. 

In analogy to the squared distance function to a curve, as introduced
in (\ref{eq:R2_def}), we now introduce the squared distance function 
between the surface $\v\gamma(t,\varphi)$ and the evaluation point $\v
x=(x,y,z)$, 
\begin{align}
\begin{split}
  R^2(t,\varphi,\v x) &\coloneqq \norm{\v\gamma(t,\varphi)-\v x}^{2} \\ &=
(\gamma_1(t,\varphi)-x)^2+(\gamma_2(t,\varphi)-y)^2+(\gamma_3(t,\varphi)-z)^2.
  \label{eq:R2_def_surface} 
  \end{split}
\end{align}
Note that we will later evaluate $R^2$ also for complex arguments using the
right most expression, in which case it is no longer a norm.
With this, the integrand of (\ref{eq:3Dpot}) can be written 
\begin{align}
\Theta_p(t,\varphi,\v x)=\frac{f(t,\varphi)}{\pars{R^2(t,\varphi,\v x)}^p}.
  \label{eq:Thetap_st} 
\end{align}

The operators $\opint[g]$, $Q_n[g]$ and $E_n[g]$ were introduced in the beginning of Section
\ref{sec:basictheory}, with $E_n[g]= \opint[g]-Q_n[g]$. Here we use them with a subindex indicating if
they are applied in the $t$ or the $\varphi$ direction, where it
should be understood that Gauss-Legendre quadrature rule is applied in
the $t$-direction, and the trapezoidal rule in the $\varphi$
direction. For ease of
notation, we will skip the brackets above, such that $\opint_t \opint_\varphi \Theta_p$ means an
integration of $\Theta_p$ first in the $\varphi$ and then in the $t$
direction. We can then write the tensor product quadrature as
\begin{align}
  \opquadt \opquadphi \Theta_p
  &= \pars{\opint_t - \opremt}\pars{\opint_{\varphi} - \opremphi}
    \Theta_p, 
\end{align}
and from here
\begin{align}
  \opremtphi \Theta_p &\coloneqq 
             \left(  \opint_t\opint_{\varphi} - \opquadt \opquadphi \right) \, \Theta_p \notag
\\ 
& = \left( \opint_t \opremphi + \opremt  \opint_{\varphi} - \opremt
                        \opremphi  \right) \, \Theta_p  \notag \\
& \approx
             \left( \opint_t \opremphi + \opint_{\varphi}\opremt \right) \, \Theta_p
             .
             \label{eq:proderr_approx}
\end{align}
In this last step, we have neglected the quadratic error term, and used that
$\opremt\opint_{\varphi}=\opint_{\varphi}\opremt$. This last fact
follows from
$\opremt\opint_{\varphi}=(\opint_{t}-\opquadt)\opint_{\varphi}$
combined with $\opint_{t}\opint_{\varphi}=\opint_{\varphi}\opint_{t}$
and $\opquadt \opint_{\varphi}=\opint_{\varphi}\opquadt$. For a more
detailed discussion, see \cite{Elliott2011}.
%Neglecting the quadratic error term, and using that
%$\opremt\opint_{\varphi}=\opint_{\varphi}\opremt$, \textcolor{red}{The
 % swap of order not obvious, should be commented on. Where? I thought in was in \cite%{Elliott2011} but could'nt find.} we can approximate the tensor product quadrature error% as
For some basic integrals, Elliott et al. \cite{Elliott2015} have shown 
that the quadratic error term that we here neglect can
have an important contribution. As it is a higher order
contribution, this is only true when the quadrature error is
large, and we will derive our estimates without it, as was also done in \cite{AFKLINTEBERG20221}. 

Explicitly writing out the first term in the right hand side of
(\ref{eq:proderr_approx}), we have 
\begin{align}
  \opint_t \opremphi \Theta_p
  %%\frac{f}{\norm{\v\gamma-\v x}^{2p}}
  =
  \int_{E_1}
  \left[
  \int_{E_2}
  \frac{f(t,\varphi) \dif \varphi}{\norm{\v\gamma(t,\varphi)-\v x}^{2p}}
  -
  \sum_{l=1}^{n_\varphi} 
  \frac{f(t,\varphi_l) \wtz_l}{\norm{\v\gamma(t,\varphi_l)-\v x}^{2p}}
  \right] \dif t,
  \label{eq:IsEt}
\end{align}
where $\varphi_l= 2\pi (l-1) /n_\varphi$, $l=1,\ldots,n_\varphi$ and
$\wtz_l=2\pi/n_\varphi$, $\forall l$. 
The term in the brackets (i.e. $\opremphi$) represents the quadrature
error of the trapezoidal rule on the line $L_t$ that for a given $t$ is defined as
\begin{align}
  L_t \coloneqq
  \left\{
  \v\gamma(t,\varphi)
  \mid
  \varphi \in E_2
  \right\}
  ,\quad
  t \in E_1
  .
\end{align}
For short, we will denote this curve $\v\gamma(t,\cdot)$.
For a fixed $t$, this is the quadrature error for the trapezoidal
rule, for which an
estimate is given in \cref{est:trapz_p}. 
The term $\opint_\varphi \opremt \Theta_p$ can be written analogously
to   \eqref{eq:IsEt}, simply swapping $t$ and $\varphi$, introducing
the Gauss-Legendre quadrature nodes and weights. 
The error estimate that needs to be integrated in
this term is given in \cref{est:gl_p}.

To be able to distinguish if the geometry factor in the error estimate
corresponds to $\gamma(t,\cdot)$ or $\gamma(\cdot,\varphi)$, we 
extend the definition of the  geometry factor in
(\ref{eq:geometry_factor}) and denote 
\begin{align}
  \geomfac{1}(t,\varphi) &= \left(2\pars{\v\gamma(t,\varphi)-\v x}\cdot\v\gamma_t(t,\varphi) \right)^{-1}
  =\left( \frac{\partial}{\partial t} R^2(t,\varphi,\v x)\right)^{-1},
 \label{eq:geometry_factor_one}                  
  \\
   \geomfac{2}(t,\varphi)&= \left(2\pars{\v\gamma(t,\varphi)-\v x}\cdot\v\gamma_\varphi(t,\varphi) \right)^{-1}
  =\left( \frac{\partial}{\partial \varphi} R^2(t,\varphi,\v x)\right)^{-1}.
\label{eq:geometry_factor_two}
\end{align}

We need to work with the absolute value of the error, and we
will use the estimate
\begin{align}
\abs{\opremtphi \Theta_p}
             \approx
             \abs{\left( \opint_t \opremphi + \opint_{\varphi}\opremt \right) \,
  \Theta_p}
  \le \opint_t \abs{ \opremphi   \Theta_p } +  \opint_{\varphi}
 \abs{ \opremt   \Theta_p }.
\label{eq:proderr_approx_abs}
\end{align}
Expanding back from this shorthand notation, this can be formulated as follows.
%%%%%%%%%%%%%%%%%%%%%%%%%%%%%%%%%%%%%%%%%%%%%%%%%%%%%%%%%%
%%%%%%%%%%%%%%%%%%%%%%%%%%%%%%%%%%%%%%%%%%%%%%%%%%%%%%%%%%
\begin{estimate}[Surface in $\reals^3$]
   \label{est:3D}
 Given an evaluation point $\v x \in \reals^3$,  consider the integral in  (\ref{eq:3Dpot}) with $2p \in \zplus$,
   where  $S \subset \mathbb R^3$ is a two-dimensional
smooth closed surface parametrized by $\v\gamma : E \to \mathbb R^3$,
$E = \{E_1 \times E_2\} = [-1,1] \times [0,2\pi) \subset \mathbb R^2$.
The integrand is assumed to be periodic in $\varphi$ over the
integration interval  $E_2=[0,2\pi)$.

The error in approximating the integral with the $n_t$ point
Gauss-Legendre rule in the $t$-direction and the $n_\varphi$-point trapezoidal
rule in the $\varphi$ direction is defined as
\begin{align}
 \EQwarg = \abs{
  \int_{E_1}  \int_{E_2}
  \frac{f(t,\varphi) }{\norm{\v\gamma(t,\varphi)-\v x}^{2p}} \dif \varphi  \dif t
  -  \sum_{l=1}^{n_\varphi} \sum_{k=1}^{n_t} 
  \frac{f(t_k,\varphi_l) \wtz_l \wgl_k}{\norm{\v\gamma(t_k,\varphi_l)-\v x}^{2p}}}
  \label{eq:EQdef}
\end{align}
where $\{t_k,\wgl_k\}_{k=1}^{n_t}$ and $\{\varphi_l,\wtz_l\}_{l=1}^{n_\varphi}$ are the
Gauss-Legendre and trapezoidal rule quadrature nodes and weights. 

Assume that $\varphi_0(t,\v x)$ and $t_0(\varphi, \v x)$ as defined
below exist for $t \in [-1,1]$ and $\varphi \in [0, 2 \pi)$,
respectively. Then,  $\EQQ$
can be estimated as 
 \begin{align}
  \EQwarg  \approx \Eestwarg
             =
  E_{\v \gamma}^{TZ}(f,p,n_\varphi,\v x)+ E_{\v \gamma}^{GL}(f,p,n_t,\v x),
    \label{eq:ETZplusEGL}
 \end{align}
 where
\begin{align}
  E_{\v \gamma}^{TZ}(f,p,n_\varphi,\v x)&=\int_{E_1} 
  \abs{f\pars{t,\varphi_0(t,\v x)} \geomfac{2}\pars{t,\varphi_0(t,\v x),\v x }^p}
  \est^{TZ}(\varphi_0(t, \v x),n_{\varphi},p) \dif t ,
 \label{eq:ETZdef}\\
  E_{\v \gamma}^{GL}(f,p,n_t,\v x) &=  \int_{E_2} 
  \abs{f\pars{t_0(\varphi, \v x),\varphi} \geomfac{1}\pars{t_0(\varphi, \v x),\varphi ,\v x}^p}
  \est^{GL}(t_0(\varphi,\v x),n_t,p) \dif \varphi ,
    \label{eq:EGLdef}
\end{align}
and 
\begin{align}
 \est^{TZ}(\varphi_0,n,p) & =
                      \frac{4\pi}{ \Gamma(p)}  n^{p-1}  e^{-n \abs{\Im \varphi_0}},
  \label{eq:estTZ}
  \\
 \est^{GL}(t_0,n,p) & =
  \frac{4\pi}{ \Gamma(p)}   \abs{ \frac{2n+1}{\sqrt{t_0^2-1}} }^{p-1}
\abs{t_0 + \sqrt{t_0^2-1}}^{-(2n+1)},
\label{eq:estGL}
\end{align}
where $\sqrt{z^2-1}$ is defined as $\sqrt{z+1}\sqrt{z-1}$ with $-\pi <
\arg(z \pm 1) \le \pi$.

%% \label{eq:EQdef} \label{eq:ETZplusEGL}    \EQwarg \Eestwarg 

Here, $\Gamma(p)$ is the gamma function, and the geometry
factors $\geomfac{1}$ and $\geomfac{2}$  are defined in  (\ref{eq:geometry_factor_one})- (\ref{eq:geometry_factor_two}). 
%%The squared distance function $ R^2(t,\varphi,\v x)$ is defined in (\ref{eq:R2_def_surface}).
Given the evaluation point $\v x \in \reals^3$ and $t$,
$\{\varphi_0(t,\v x), \conj{\varphi_0(t, \v x)}\}$ is the pair of
complex conjugate roots of $ R^2(t,\varphi,\v x) =
\norm{\v\gamma(t,\varphi)-\v x}^{2} $ closest to the integration
interval $E_2$, and similarly for $t_0(\varphi,\v x)$ for given $\v x$ and $\varphi$.

%Given the evaluation point $\v x \in \reals^3$ and $t$,
%$\{\varphi_0(t,\v x), \varphi_0(t, \v x)\}$ is the pair of complex conjugate roots of 
%$ R^2(t,\varphi,\v x)$ as defined in (\ref{eq:R2_def_surface}), closest to the
%integration interval $E_2$, and similarly for $t_0(\varphi,\v x)$.
%Note that the geometry factors as well as the roots depend on the
%evaluation point $\v x$, even though that is not explicit in the notation.
\end{estimate}
%\textcolor{red}{Commands have been defined for $\Eest$, $\Eestwarg$,
%  $\wtz$, $\wgl$, $\geomfac{1}$, $\geomfac{2}$ and notation can easily be changed. }
\begin{remark}
\label{rem:n_inf}
 % We have no formal proof that this estimate yields the limiting value
  %of the full quadrature error in the limit
  %$n_t,n_\varphi \rightarrow \infty$.
  For this error estimate to be useful, it should give a good approximation of the error already
  for moderate values of $n_t$ and $n_\varphi$.  In practice we find
  this to be true as long as  $n_t$ and $n_\varphi$ are large enough for the surface to be well
  resolved. This will be discussed in the numerical results
  section.
\end{remark}
\begin{remark}
  Given $\v x \in \reals^3$, it is not guaranteed that a root
  $\varphi_0(t,\v x)$ exists for all $t \in [-1,1]$, nor that 
  $t_0(\varphi, \v x)$ exists for all $\varphi \in [0, 2 \pi)$.
 For example,  for surfaces of spherical topology with a global parametrization as
  defined in \cref{eq:gammast}, the squared distance function $ R^2(t,\varphi,\v x)$ is
 independent of $\varphi$ for $t=-1,1$, and hence no root $\varphi_0$
 exists.
 %% (at these points $\|\v\gamma_\varphi(t,\varphi) \|=0$).
 For axisymmetric surfaces, no root $\varphi_0$ exist for evaluation
 points along the axis of symmetry, interior or exterior to the
 surface. 
\end{remark}
%\begin{remark}
%The condition that $\|R^2_t(t,\varphi,\v x) \|, \, \|R^2_\varphi(t,\varphi,\v x) \|>0$
%for all $t \in E_1, \, \varphi \in E_2$, ensures that $R^2(t,\varphi,\v
%x)$ depends on $\varphi$ for each $t$, and  on $t$ for each
%$\varphi$. This guarantees that the roots $\varphi_0$ and $t_0$ of $
%R^2(t,\varphi,\v x)$ exist for $t \in E_1$ and $\varphi \in E_2$, respectively.
%This usually holds true if $\|\v\gamma_t(t,\varphi) \|,\|\v\gamma_\varphi(t,\varphi) \|>0$
%for all $t \in E_1, \, \varphi \in E_2$, but can fail to hold for some
%evaluation points $\v x$ due to symmetries. 
%\end{remark}
%\begin{remark}
%\end{remark}

The exposition in this section has followed what was done in
\cite{AFKLINTEBERG20221},  however combining
integration by the Gauss-Legendre rule in one direction, and the
trapezoidal rule in the other. In \cite{AFKLINTEBERG20221} discretizations of surfaces of
genus 1 with either a global trapezoidal rule in both directions, or a
panel based Gauss-Legendre rule were considered. When a panel based
discretization is used, the error estimates for the panels closest to
the evaluation point are added together.
We cannot theoretically guarantee that the roots that we need for
evaluation of the error estimate always exist, and there are in
general no analytical formulas for the roots. 
In \cite{AFKLINTEBERG20221}, the root finding is done numerically, and approximations to
  the integrals in \eqref{eq:ETZdef} and \eqref{eq:EGLdef} are made, using the fact that
  the error contribution is strongly localized to the region on the
  surface closest to the evaluation point. 
For each evaluation point $\v x$, only one root
  $t_0(\varphi^*,\v x)$ and one root $\varphi_0(t^*,\v x)$ are needed,
  where $\gamma(t^*,\varphi^*)$ is the grid point (quadrature node) on
  the surface closest to the evaluation point $\v x$.  This approach
  works well  apart from the rare occasions where the 
  root finding algorithm fails for evaluation
  points quite far from the surface.

%With these surfaces, the issue of the roots not being defined does not
%really arise. For a plain torus, this would happen for evaluation
%points along the symmetry axis going through the center of the torus,
%but these are not points close to the surface.

To understand how we can evaluate error estimates for surfaces of
genus 0 with a global parametrization, we will first analytically consider the simpler
case of an axisymmetric surface, at times further simplified to a
sphere.  We will then in Section \ref{sec:numeval} discuss the practical evaluation of the estimate,
including how to approximate the remaining integrals and determine the
roots as needed.

\section{Analytical derivations for axisymmetric and spherical surfaces}
\label{sec:analytical}
In this section we will consider an
axisymmetric surface, as parametrized by
\begin{align}
\gammaaxisym(\theta,\varphi)= \left( a(\theta) \sin(\theta) \cos(\varphi),
  a(\theta) \sin(\theta) \sin(\varphi), b(\theta) \cos(\theta) \right), 
\label{eq:gamma_axisymm}
\end{align}
with $a(\theta),b(\theta)>0$.
Here, $\gammaaxisym: U \rightarrow \reals^3$, where 
$U=\left\{ (\theta,\varphi)\in [0,\pi]\times [0,2\pi) \right\}$.
For some results, we will simplify further and set $a(\theta)=b(\theta)=a$
and consider a sphere of radius $a$. 

The parametrization $\v \gamma(t,\varphi)$ relates to this parametrization
through a mapping $\theta=\theta(t)$ as given in
\cref{eq:gammast}, with $\v \gamma^\circ=\gammaaxisym$.
The map $\theta(t)$ will change the
parametrization of the surface in $t$ and hence yield different
locations of the Gauss-Legendre quadrature nodes on the surface.
In this section, we will keep this choice open to the extent possible, and state most results
in $\theta$ and $\varphi$.

Note that the axis of symmetry for $\gammaaxisym(\theta,\varphi)$ is
the $z$-axis. For a surface of a different shift and orientation, the
evaluation point $\v x$ can be translated and rotated into a local
coordinate system of the particle, and the results stated below will
apply.

Generally, the roots of $R^2$ cannot be found analytically, and we
need to compute them using a root finding procedure. For the axisymmetric
case, we can however analytically find the roots $\varphi_0$ given
$\theta$, and for a sphere, we can furthermore find the
roots $\theta_0$ given $\varphi$.

We will see that the estimate for the error incurred by the trapezoidal
rule cannot be evaluated for an evaluation point at the symmetry axis,
as the integrand in \cref{eq:ETZdef} becomes undefined. Using the
analytical expressions for the roots, we can study appropriate limits
as the evaluation point approaches the symmetry axis. We will also use
these analytical results to derive a simplified error estimate for the
sphere.

\subsection{Analytical roots to the squared distance function}
  
We will start by finding the roots to the squared distance function defined with respect to a
circle in the $xy$ plane, and then extend this result.

\begin{lemma}[Root of $R^2$ for circle in plane]
Let a circle of radius $a>0$ in the $xy$-plane be parametrized by $(\gamma_1(\alpha),
\gamma_2(\alpha), \gamma_3(\alpha))=a(\cos\alpha,\sin\alpha,0)$, $0 \le \alpha <2 \pi$.
Given a point $\v x =(x,y,z)\in \reals^3$, not on the curve,
% $(\v x \ne \gamma_1(\alpha), \gamma_2(\alpha),0)$
define
\begin{align*}
  R^2(\alpha,\v x)=(\gamma_1(\alpha)-x)^2+(\gamma_2(\alpha)-y)^2+z^2.
\end{align*}
Then 
$R^2(\alpha,\v x)=0$ for $\alpha=\alpha_0$ with
\begin{equation}
\alpha_0=\atan(y,x)\pm i \ln
  \left(\lambda+\sqrt{\lambda^2-1}\right), \quad
  \lambda=\frac{1}{2a}\frac{a^2+x^2+y^2+z^2}{\sqrt{x^2+y^2}}.
  \label{eqn:alpha_root_circle}
\end{equation}

{\flushleft Here, $\lambda>1$ and $\atan(\eta,\xi)$ is the argument of the complex number $\xi+i\eta$, 
$-\pi < \atan(\eta,\xi) \le \pi$.  }
\label{lemma:root_circle}

\end{lemma}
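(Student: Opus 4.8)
The plan is to compute $R^2(\alpha,\v x)$ in closed form, collapse the equation $R^2=0$ into a single trigonometric equation in the shifted angle $\alpha-\atan(y,x)$, and then solve that equation over $\CC$ using $\cos(is)=\cosh s$. First I would expand the square distance; since $\gamma_1(\alpha)^2+\gamma_2(\alpha)^2=a^2$ identically,
\[
R^2(\alpha,\v x)=a^2+x^2+y^2+z^2-2a\left(x\cos\alpha+y\sin\alpha\right).
\]
Writing the target point in polar form in the $xy$-plane, $x=r\cos\beta$, $y=r\sin\beta$ with $r=\sqrt{x^2+y^2}$ and $\beta=\atan(y,x)$, gives $x\cos\alpha+y\sin\alpha=r\cos(\alpha-\beta)$, hence $R^2(\alpha,\v x)=a^2+r^2+z^2-2ar\cos(\alpha-\beta)$. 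Therefore $R^2(\alpha,\v x)=0$ is equivalent to $\cos(\alpha-\beta)=\lambda$ with $\lambda=\frac{a^2+r^2+z^2}{2ar}=\frac{1}{2a}\frac{a^2+x^2+y^2+z^2}{\sqrt{x^2+y^2}}$, exactly the quantity in the statement (note this already forces $r>0$ for a root to exist — if $\v x$ is on the $z$-axis then $R^2=a^2+z^2>0$ is independent of $\alpha$).

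Next I would verify the strict inequality $\lambda>1$. By AM--GM, $a^2+r^2\ge 2ar$, so $\lambda\ge 1+\frac{z^2}{2ar}\ge 1$, with equality in both steps only if $r=a$ and $z=0$, i.e. only if $\v x$ lies on the circle; since $\v x$ is assumed not on the curve, $\lambda>1$ strictly. This is what makes the equation $\cos u=\lambda$ have no real solution and a purely imaginary one (up to real shift).

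Finally I would solve $\cos u=\lambda$ for $u=\alpha-\beta\in\CC$. Trying $u=is$ with $s\in\reals$ gives $\cos(is)=\cosh s=\lambda$, so $s=\pm\cosh^{-1}\lambda=\pm\ln\!\big(\lambda+\sqrt{\lambda^2-1}\big)$, valid since $\lambda>1$. Hence the solutions are $\alpha-\beta=\pm i\ln(\lambda+\sqrt{\lambda^2-1})+2\pi k$, $k\in\ZZ$, and substituting back $\beta=\atan(y,x)$ yields
\[
\alpha_0=\atan(y,x)\pm i\ln\!\left(\lambda+\sqrt{\lambda^2-1}\right),
\]
with the two signs forming a complex-conjugate pair because $\beta$ is real; this is the pair nearest the integration interval (the others differ by $2\pi k$), which is the relevant one. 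A direct substitution, using $\cos(\beta+is)=\cos\beta\cosh s-i\sin\beta\sinh s$ and $\cosh s=\lambda$, confirms $R^2(\alpha_0,\v x)=0$.

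\textbf{Main obstacle.}
There is no deep difficulty here; the only care needed is (i) the reduction to a single angle via the polar representation of $(x,y)$, which fixes the $\atan2$ branch of the real part, (ii) the strict bound $\lambda>1$, which is precisely where the hypothesis "$\v x$ not on the curve" enters, and (iii) correctly identifying $\cosh^{-1}\lambda=\ln(\lambda+\sqrt{\lambda^2-1})$ and selecting, out of the $2\pi$-periodic family of roots, the conjugate pair closest to the integration interval. The degenerate case $x=y=0$ must be excluded for the formula to make sense, consistent with the later remark that a root $\varphi_0$ need not exist on the axis of symmetry.
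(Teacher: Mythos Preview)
Your proof is correct and follows essentially the same approach as the paper: expand $R^2$, pass to polar coordinates $(r,\beta)$ for $(x,y)$ to reduce $R^2=0$ to $\cos(\alpha-\beta)=\lambda$, show $\lambda>1$ via the AM--GM bound on $a^2+r^2$, and solve for a purely imaginary shift. The only cosmetic difference is that the paper substitutes $\alpha=\beta+i\eta$, $\eta=\ln\beta'$ and solves the resulting quadratic $\beta'^2-2\lambda\beta'+1=0$, whereas you invoke $\cos(is)=\cosh s$ and the closed form $\cosh^{-1}\lambda=\ln(\lambda+\sqrt{\lambda^2-1})$ directly; these are the same computation.
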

%%%%%%%%%%%%%%%%%%%%%%%%%%%%%%%%%%%%%%%%%%%%%%%%%%%%%%%%%%%%
\begin{remark}
Note that if $\alpha_0$ is a root to $R^2(\alpha,\v x)$, so is
$\alpha_0+2\pi p$ for any $p \in \ZZ$. Further, notice that we have
\begin{align*}
\alpha_0=\atan(y,x)\pm i \ln
  \left(\lambda+\sqrt{\lambda^2-1}\right) = \atan(y,x) \mp i \ln
  \left(\lambda-\sqrt{\lambda^2-1}\right).
\end{align*}

%\textcolor{red}{Choose to write plus or minus before square root in
%  the Lemma?} 
\end{remark}
%%%%%%%%%%%%%%%%%%%%%%%%%%%%%%%%%%%%%%%%%%%%%%%%%%%%%%%%%%%%
\begin{proof} (\cref{lemma:root_circle})
Introduce $\rho^2=x^2+y^2$ and $d^2=a^2+\rho^2+z^2$. With this
notation we have
\begin{align}
R^2(\alpha,\textbf{x})& =(a\cos(\alpha)-x)^2+(a\sin(\alpha)-y)^2+
                        z^2 =d^2-2a(x\cos(\alpha)+y\sin(\alpha))
  \label{eq:R2_phi_circle} \\
& =d^2-2a\rho\left(\cos(\bar{\theta}\right)\cos(\alpha)+\sin(\bar{\theta})\sin(\alpha))
                                \notag
\end{align}
where $x=\rho \cos(\bar{\theta})$,  $y=\rho \sin(\bar{\theta})$ with
$\bar{\theta}=\atan(y,x)$ have been introduced in the last step. 

To determine the roots of $R^2$ we replace $\alpha=\bar{\theta}+i \eta$,
and rewrite the four trigonometric terms similarly to $\cos(\bar{\theta}+i\eta)=\frac{1}{2}
(e^{i\bar{\theta}}e^{-\eta}+e^{-i\bar{\theta}}e^{\eta})$. After
simplification, this yields 
\begin{equation}
0=d^2-2a\rho \left[ \frac{1}{2}(e^{-\eta}+e^\eta) \right]=d^2-a\rho(\beta^{-1}+\beta),
\label{eq:R2_phi_circle3}
\end{equation}
where we have replaced $\eta=\ln(\beta)$.
Introducing $\lambda=\frac{1}{2}\frac{d^2}{a\rho}$, we are left to
solve  $\beta^2-2\lambda \beta+1=0$ which yields $\beta=\lambda \pm \sqrt{\lambda^2-1}$. 

We have  $\alpha_0=\bar{\theta}+i \ln \beta$, for the two values of
$\beta$. Using that 
$\lambda-\sqrt{\lambda^2-1}=(\lambda+\sqrt{\lambda^2-1})^{-1}$ and
hence $\ln (\lambda-\sqrt{\lambda^2-1})=-\ln
(\lambda+\sqrt{\lambda^2-1})$, we can write
\begin{equation}
\alpha_0=\atan(y,x)\pm i\ln(\lambda+\sqrt{\lambda^2-1}) = \atan(y,x) \mp i \ln
  \left(\lambda-\sqrt{\lambda^2-1}\right).
\end{equation}
To see that $\lambda>1$, introduce  $\rho=\sqrt{x^2+y^2}$, and write
$\lambda=(a/\rho+\rho/a)/2+z^2/(2a\rho)$. Here, $(a/\rho+\rho/a)/2 \ge1$
with equality only when $\rho=a$. Since $\v x$ is not on the curve, we
cannot have $z=0$ in this case, and hence $\lambda>1$.
\end{proof}
%%%%%%%%%%%%%%%%%%%%%%%%%%%%%%%%%%%%%%%%%%%%%%%%%%%%%%%%%%%%
%%%%%%%%%%%%%%%%%%%%%%%%%%%%%%%%%%%%%%%%%%%%%%%%%%%%%%%%%%%%
%%%%%%%%%%%%%%%%%%%%%%%%%%%%%%%%%%%%%%%%%%%%%%%%%%%%%%%%%%%%
\begin{lemma}[Root of $R^2$ in $\varphi$ given $\theta$]
  %% varphi_0 given thetabar. 
Let the axisymmetric surface $\gammaaxisym(\theta,\varphi)$ be
parametrized as in \cref{eq:gamma_axisymm}. Given an evaluation point
$\v x =(x,y,z)\in \reals^3$ not on $\gammaaxisym$,
define
\begin{align*}
  R^2(\theta,\varphi,\v
  x)=(\gammaaxisym_1(\theta,\varphi)-x)^2+(\gammaaxisym_2(\theta,\varphi)-y)^2+(\gammaaxisym_3(\theta,\varphi)
  -z)^2.
\end{align*}
Assume $x^2+y^2>0$.
Given $\theta=\bar{\theta} \in (0,\pi)$, 
$R^2(\bar{\theta},\varphi,\v x)=0$ for $\varphi=\varphi_0$ with
\begin{align}
\varphi_0=\atan(y,x)\pm i \ln
  \left(\lambda+\sqrt{\lambda^2-1}\right), \quad
  \lambda=\frac{1}{2\tilde{a}}\frac{\tilde{a}^2+x^2+y^2+(\tilde{b}-z)^2}{\sqrt{x^2+y^2}}, 
\label{eq:phiroot_given_theta}
\end{align}
where $\tilde{a}=a(\bar{\theta})\sin \bar{\theta}$ and  $\tilde{b}=b(\bar{\theta})\cos \bar{\theta}$.
Here, $\lambda>1$, and $\atan(\eta,\xi)$ is the argument of the complex number
$\xi+i\eta$, $-\pi < \atan(\eta,\xi) \le \pi$.  
\label{lemma:phiroot_given_theta}
\end{lemma}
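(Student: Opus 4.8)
The plan is to reduce this to \cref{lemma:root_circle} by observing that fixing $\theta=\bar\theta$ slices the axisymmetric surface into a horizontal circle. First I would evaluate the parametrization \cref{eq:gamma_axisymm} at $\theta=\bar\theta$, obtaining $\gammaaxisym(\bar\theta,\varphi)=(\tilde a\cos\varphi,\ \tilde a\sin\varphi,\ \tilde b)$ with $\tilde a=a(\bar\theta)\sin\bar\theta$ and $\tilde b=b(\bar\theta)\cos\bar\theta$. Since $\bar\theta\in(0,\pi)$ and $a(\bar\theta)>0$, we have $\tilde a>0$, so this is a genuine circle of radius $\tilde a$ lying in the plane $\{z=\tilde b\}$.

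Next I would substitute into the squared distance function,
\[
R^2(\bar\theta,\varphi,\v x)=(\tilde a\cos\varphi-x)^2+(\tilde a\sin\varphi-y)^2+(\tilde b-z)^2,
\]
and recognize the right-hand side as the squared distance to the planar circle of radius $\tilde a$ in the $xy$-plane, evaluated at the shifted point $\widehat{\v x}=(x,y,z-\tilde b)$, in the notation of \cref{lemma:root_circle} (the vertical shift enters only through $(\tilde b-z)^2=(z-\tilde b)^2$, so its sign is irrelevant). Then I would check that the hypotheses of \cref{lemma:root_circle} hold for the pair $(\tilde a,\widehat{\v x})$: the assumption $x^2+y^2>0$ is inherited directly, and $\widehat{\v x}$ cannot lie on the planar circle, since that would force $x^2+y^2=\tilde a^2$ and $z=\tilde b$, i.e. $\gammaaxisym(\bar\theta,\varphi)=\v x$ for some $\varphi$, contradicting $\v x\notin\gammaaxisym$. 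Applying \cref{lemma:root_circle} with $a\mapsto\tilde a$ and $z\mapsto z-\tilde b$ then yields $\varphi_0=\atan(y,x)\pm i\ln\!\left(\lambda+\sqrt{\lambda^2-1}\right)$ with
\[
\lambda=\frac{1}{2\tilde a}\,\frac{\tilde a^2+x^2+y^2+(z-\tilde b)^2}{\sqrt{x^2+y^2}},
\]
which is exactly \cref{eq:phiroot_given_theta}, and the inequality $\lambda>1$ is inherited from the same lemma.

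There is essentially no hard step here; the statement is a corollary of \cref{lemma:root_circle}. The only point requiring a little care is the bookkeeping of the non-degeneracy conditions — in particular ruling out that the shifted point $\widehat{\v x}$ lies on the sliced circle — and making explicit that $\tilde a>0$ on the open interval $\theta\in(0,\pi)$, which is precisely what fails at the poles and motivates the separate treatment later in \cref{sec:analytical}.
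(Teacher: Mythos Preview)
Your proposal is correct and matches the paper's own proof essentially line for line: fix $\theta=\bar\theta$, recognize the slice as a circle of radius $\tilde a$ at height $\tilde b$, and invoke \cref{lemma:root_circle} with the shifted target point. The paper re-derives the inequality $\lambda>1$ by repeating the AM--GM argument in this setting rather than simply citing the earlier lemma, but that is a cosmetic difference; your verification that $\widehat{\v x}$ does not lie on the sliced circle is exactly the observation needed to inherit it.
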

%%%%%%%%%%%%%%%%%%%%%%%%%%%%%%%%%%%%%%%%%%%%%%%%%%%%%%%%%%%%
\begin{remark}
Note that the root (in \cref{lemma:phiroot_given_theta}) is not defined if $x^2+y^2=0$. In this case $R^2(\bar{\theta},\varphi,\v
x)=\tilde{a}^2+(\tilde{b}-z)^2$, which is independent of $\varphi$ and
always positive, so no root can be found. Similarly, if  $\bar{\theta}=0$ or
$\pi$, $R^2(\bar{\theta},\varphi,\v x)$ is again independent of
$\varphi$, and no root can be found.
\label{rem:phiroot_given_theta} 
\end{remark}
%%%%%%%%%%%%%%%%%%%%%%%%%%%%%%%%%%%%%%%%%%%%%%%%%%%%%%%%%%%%
\begin{proof} (\cref{lemma:phiroot_given_theta})
  Fix $\theta=\bar{\theta}$, define  $\tilde{a}=a(\bar{\theta})\sin(\bar{\theta})$, $ \tilde{b}=b(\bar{\theta)}\cos(\bar{\theta})$, and rewrite \cref{eq:gamma_axisymm} as 
\begin{equation}
\label{eq:sphere_thetafixed}
\gammaaxisym(\theta,\varphi)=\left( \tilde{a} \cos(\varphi), \tilde{a} \sin(\varphi),\tilde{b} \right).
\end{equation}
Now
\begin{equation}
R^2(\bar{\theta},\varphi,\v x)=(\tilde{a}\cos(\varphi)-x)^2+(\tilde{a}\sin(\varphi)-y)^2+ (\tilde{b}-z)^2.
\end{equation}
\cref{eq:sphere_thetafixed} describes a circle of radius $\tilde{a}$
in the $xy$ plane at $z=\tilde{b}$, and so we can proceed similarly as we did for \cref{lemma:root_circle}. 
In this case, we get
$\lambda=\frac{\tilde{a}^2+(\tilde{b}-z)^2+\rho^2}{2\tilde{a}\rho}$,
where  $\rho^2=x^2+y^2$. With this $\lambda$,  the expression of the
analytical root for $\varphi_0(\bar{\theta})$ is of the same form as
in \cref{eqn:alpha_root_circle}, as given in \cref{eq:phiroot_given_theta}.
%\begin{equation}
%\varphi_0(\bar{\theta})=\atan(y,x)\pm i\ln(\lambda-\sqrt{\lambda^2-1}).
% \end{equation}

To see that $\lambda>1$, the argument is similar to that in the proof
of \cref{lemma:root_circle}. Rewrite $\lambda$ as 
$\lambda=(\tilde{a}/\rho+\rho/\tilde{a})/2+(\tilde{b}-z)^2/(2\tilde{a}
\rho)$. Here, $(\tilde{a}/\rho+\rho/\tilde{a})/2 \ge1$
with equality when $\rho=\tilde{a}$. Since $\v x$ is not on the surface, we
cannot have $z=\tilde{b}$ in this case, and hence $\lambda>1$.
\end{proof}
%%%%%%%%%%%%%%%%%%%%%%%%%%%%%%%%%%%%%%%%%%%%%%%%%%%%%%%%%%%%
%%%%%%%%%%%%%%%%%%%%%%%%%%%%%%%%%%%%%%%%%%%%%%%%%%%%%%%%%%%%
%%%%%%%%%%%%%%%%%%%%%%%%%%%%%%%%%%%%%%%%%%%%%%%%%%%%%%%%%%%%
\begin{lemma}[Root of $R^2$ in $\theta$ given $\varphi$]
  % theta0 given varphibar. 
Let the sphere $\gammaaxisym(\theta,\varphi)$ be
parametrized as in \cref{eq:gamma_axisymm} with $a(\theta)=b(\theta)=a$.
Given an evaluation point $\v x =(x,y,z)\in \reals^3$ not on $\gammaaxisym$, 
define
\begin{align*}
  R^2(\theta,\varphi,\v
  x)=(\gammaaxisym_1(\theta,\varphi)-x)^2+(\gammaaxisym_2(\theta,\varphi)-y)^2+(\gammaaxisym_3(\theta,\varphi)
  -z)^2.
\end{align*}
Given $\varphi=\bar{\varphi} \in [0,2\pi)$, assume that if $z=0$, then
$\bar{\varphi}-\atan(y,x) \ne \pi/2+p \pi$, $p \in {\mathbb Z}$.
Then 
$R^2(\theta,\bar{\varphi},\v x)=0$ for $\theta=\theta_0$ with
\begin{align}
  \theta_0 &=
 % \atan(\tilde{x},z)
    \atan(x \cos \bar{\varphi} + y \sin \bar{\varphi},z)
  \pm i \ln
  \left(\lambda+\sqrt{\lambda^2-1}\right),
  \label{eqn:thetaroot_given_phi}
 \\
  \lambda&=\frac{1}{2a}\frac{a^2+x^2+y^2+z^2}{\sqrt{(x \cos \bar{\varphi} + y \sin \bar{\varphi})^2+z^2}}.
\label{eqn:thetaroot_given_phi2}
\end{align}
Here, $\lambda>1$ and $\atan(\eta,\xi)$ is the argument of the complex number
$\xi+i\eta$,  $-\pi < \atan(\eta,\xi) \le \pi$. 
%{\flushleft Here, $\atan(\eta,\xi)$ is the argument of the complex number
%$\xi+i\eta$, and $-\pi < \atan(\eta,\xi) \le \pi$.  }

\label{lemma:thetaroot_given_phi}
\end{lemma}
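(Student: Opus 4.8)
The plan is to reduce $R^2(\theta,\bar\varphi,\v x)$, viewed as a function of $\theta$ alone, to exactly the form $d^2 - 2ar\cos(\theta-\psi)$ that was solved in the proof of \cref{lemma:root_circle}, and then rerun that argument with $\theta$ in the role of $\alpha$. Concretely, for the sphere $\gammaaxisym(\theta,\varphi) = (a\sin\theta\cos\varphi,\ a\sin\theta\sin\varphi,\ a\cos\theta)$ I would expand the three squared terms in $R^2(\theta,\bar\varphi,\v x)$ and collect. Using $\sin^2\theta+\cos^2\theta=1$ the purely trigonometric quadratic terms collapse to $a^2$, and the remaining cross terms combine into
\[
  R^2(\theta,\bar\varphi,\v x) = d^2 - 2a\bigl(u\sin\theta + z\cos\theta\bigr),
  \qquad u := x\cos\bar\varphi + y\sin\bar\varphi,\quad d^2 := a^2 + x^2 + y^2 + z^2 .
\]
Writing $u = r\sin\psi$ and $z = r\cos\psi$ with $r := \sqrt{u^2+z^2}$ and $\psi := \atan(u,z)$ (the argument of $z+iu$), the parenthesis becomes $r\cos(\theta-\psi)$, so $R^2=0$ is equivalent to $\cos(\theta-\psi) = d^2/(2ar) =: \lambda$.

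Next, as in \cref{lemma:root_circle}, I would look for roots of the form $\theta = \psi + i\eta$ with $\eta\in\reals$, so that $\cos(\theta-\psi)=\cosh\eta$. Setting $\beta := e^{\eta}$ turns the equation into $\beta + \beta^{-1} = 2\lambda$, i.e. $\beta^2 - 2\lambda\beta + 1 = 0$, whose roots are $\beta = \lambda \pm \sqrt{\lambda^2-1}$. These are reciprocals, so $\eta = \pm\ln(\lambda+\sqrt{\lambda^2-1})$, and substituting $\psi = \atan(u,z)$ and $\lambda = d^2/(2ar)$ back, with $u$ and $r$ written out explicitly, yields precisely \cref{eqn:thetaroot_given_phi} and \cref{eqn:thetaroot_given_phi2}.

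It then remains to verify that $\lambda > 1$, for which one first needs $r > 0$. Writing $x = \rho\cos\bar\theta$, $y = \rho\sin\bar\theta$ with $\rho := \sqrt{x^2+y^2}$ and $\bar\theta := \atan(y,x)$ gives $u = \rho\cos(\bar\varphi-\bar\theta)$, so $r^2 = \rho^2\cos^2(\bar\varphi-\bar\theta) + z^2$ vanishes only when $z = 0$ and in addition either $\rho = 0$ or $\bar\varphi-\bar\theta = \pi/2 + p\pi$ for some $p\in\ZZ$; the second possibility is precisely what the hypothesis on the case $z=0$ rules out, and the first ($\v x$ the centre of the sphere) makes $R^2 \equiv a^2$, which has no root. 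For the bound itself, Cauchy--Schwarz gives $u^2 = (x\cos\bar\varphi + y\sin\bar\varphi)^2 \le x^2 + y^2$, hence $r^2 \le x^2+y^2+z^2$ and
\[
  \lambda = \frac{a^2 + x^2 + y^2 + z^2}{2ar} \ge \frac{a^2 + r^2}{2ar} = \frac12\left(\frac{a}{r} + \frac{r}{a}\right) \ge 1,
\]
with equality throughout forcing $r = a$ and $x^2+y^2+z^2 = r^2 = a^2$, i.e. $\v x$ on the sphere, contradicting the hypothesis; therefore $\lambda > 1$. The whole computation is routine — essentially a relabelled instance of \cref{lemma:root_circle} — and the only points requiring a little care are the degenerate case $r = 0$ (handled exactly by the stated hypothesis, plus the trivial exclusion of the sphere's centre) and keeping the $\atan2$ branch conventions consistent so that the argument written in \cref{eqn:thetaroot_given_phi} is the one obtained. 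I do not anticipate a substantive obstacle.
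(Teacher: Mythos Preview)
Your proposal is correct and follows essentially the same approach as the paper: both reduce $R^2(\theta,\bar\varphi,\v x)$ to the form $d^2 - 2ar\cos(\theta-\psi)$ and then rerun the \cref{lemma:root_circle} computation to extract $\theta_0$. The only cosmetic difference is that the paper first performs a rotation by $\bar\varphi$ about the $z$-axis (obtaining $(\tilde x,\tilde y,\tilde z)=(x\cos\bar\varphi+y\sin\bar\varphi,\,-x\sin\bar\varphi+y\cos\bar\varphi,\,z)$) so that the half-circle becomes $(a\sin\theta,0,a\cos\theta)$ and \cref{lemma:root_circle} can be invoked verbatim with a permutation of coordinates, whereas you expand directly and identify $u=x\cos\bar\varphi+y\sin\bar\varphi$ without naming the rotation. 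Your $\lambda>1$ argument via Cauchy--Schwarz ($u^2\le x^2+y^2$, hence $r^2\le\|\v x\|^2$) is equivalent to the paper's, which writes $\lambda=(a/\tilde\rho+\tilde\rho/a)/2+\tilde y^2/(2a\tilde\rho)$ with $\tilde\rho=r$ and $\tilde y^2=x^2+y^2-u^2\ge 0$; these are the same inequality. Your remark that the centre $\v x=\v 0$ is a trivial degenerate case (no root) is a fair observation; the paper's proof does not single it out.
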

%%%%%%%%%%%%%%%%%%%%%%%%%%%%%%%%%%%%%%%%%%%%%%%%%%%%%%%%%%%%
%%%%%%%%%%%%%%%%%%%%%%%%%%%%%%%%%%%%%%%%%%%%%%%%%%%%%%%%%%%%
\begin{remark}
Note that the root in \cref{lemma:thetaroot_given_phi} is not defined for the case when $z=0$ and
$\bar{\varphi}-\atan(y,x)=\pi/2+p \pi$, $p \in {\mathbb
  Z}$.
In this case, $R^2(\theta,\bar{\varphi},\v x)=a^2+x^2+y^2$, which is
independent of $\theta$ and always positive, so no root can be found. 
\end{remark}
%%%%%%%%%%%%%%%%%%%%%%%%%%%%%%%%%%%%%%%%%%%%%%%%%%%%%%%%%%%%
\begin{proof} (\cref{lemma:thetaroot_given_phi})
We fix the angle $\varphi=\bar{\varphi} \in [0, 2\pi)$ and want to
determine the root $\theta_0$ of:
\begin{align}
R^2(\theta,\bar{\varphi},\v x)&=(a \sin(\theta) \cos(\bar{\varphi})-x)^2+(a \sin(\theta) \sin(\bar{\varphi})-y)^2+(a \cos(\theta) -z)^2 
\end{align}
After a rotation of the evaluation point by $\bar{\varphi}$ in
clockwise direction around the $z$-axis, we get\\
$(\tilde{x}, \tilde{y}, \tilde{z})=(x\cos(\bar{\varphi})+y\sin(\bar{\varphi}), -x\sin(\bar{\varphi})+y\cos(\bar{\varphi}), z)$.
In this new coordinate system \cref{eq:gamma_axisymm} becomes
$\tilde{\gamma}^\circ(\theta,\bar{\varphi})=\left(a \sin(\theta), 0 ,a
  \cos(\theta)\right)$.
This is as in \cref{lemma:root_circle}, but here with $z$ instead of $x$,
$x$ instead of $y$, $y$ instead of $z$. Also, $\theta\in [0, \pi]$, so
we are considering half circle. Now introduce
$\tilde{\rho}=\sqrt{\tilde{z}^2+\tilde{x}^2}$.
Using \cref{lemma:root_circle}, we get
\begin{equation}
\lambda=\frac{a^2+\tilde{\rho}^2+\tilde{y}^2}{2a\tilde{\rho}}=\frac{a^2+\tilde{x}^2+\tilde{y}^2+\tilde{z}^2}{2a\tilde{\rho}},
\end{equation}
and
\begin{equation}
\theta_0=\atan(\tilde{x},\tilde{z}) \pm i \ln(\lambda+\sqrt{\lambda^2-1}).
\end{equation}
Using the relations between  $(\tilde{x},\tilde{y},\tilde{z})$ and
$x,y,z$,
%and noticing that $\tilde{x}^2+\tilde{y}^2=x^2+y^2$,
yields \cref{eqn:thetaroot_given_phi,eqn:thetaroot_given_phi2}.

To see that $\lambda>1$, rewrite $\lambda$ as 
$\lambda=(a/\tilde{\rho}+\tilde{\rho}/a)/2+\tilde{y}^2/(2a
\tilde{\rho})$.
Here, $(a/\tilde{\rho}+\tilde{\rho}/a)/2 \ge1$
with equality when $\tilde{\rho}=a$. Since $\v x$ is not on the surface, we
cannot have $\tilde{y}=0$ in this case, and hence $\lambda>1$.
\end{proof}
%%%%%%%%%%%%%%%%%%%%%%%%%%%%%%%%%%%%%%%%%%%%%%%%%%%%%%%%%%%%
\begin{corollary} %theta0 given varphibar when eva pt on axis.
Under the assumptions of \cref{lemma:thetaroot_given_phi}, with the
evaluation point at the $z$-axis
inside or outside of the sphere, $\v x
=(0,0,z)\in \reals^3$, $\abs{z}\ne a$, it holds
% Consider the set up in \cref{lemma:thetaroot_given_phi}, but with the evaluation point at the $z$-axis outside of the sphere,
%i.e. $\v x =(0,0,z)\in \reals^3$, $\abs{z}>a$. Then it holds
\begin{align*}
  \theta_0=
  \pm i \ln(\abs{z}/a),
\end{align*}
independent of $\bar{\varphi}$.
\label{corr:thetaroot_given_phi_on_zaxis}
\end{corollary}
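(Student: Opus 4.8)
The plan is to obtain the corollary as a direct specialization of \cref{lemma:thetaroot_given_phi}: for the on-axis point $\v x=(0,0,z)$ with $z\neq 0$ the hypotheses of that lemma hold precisely when $|z|\neq a$ (the points $(0,0,\pm a)$ being the only on-axis points of the sphere, and $(0,0,0)$ being excluded since $\ln(|z|/a)$ must be finite), so I may simply insert $x=y=0$ into the closed-form root and simplify. First I would substitute $x=y=0$ into \cref{eqn:thetaroot_given_phi2}: the radicand in the denominator collapses to $z^{2}$ and the numerator to $a^{2}+z^{2}$, giving
\begin{equation*}
  \lambda=\frac{a^{2}+z^{2}}{2a|z|}=\frac12\pars{\frac{|z|}{a}+\frac{a}{|z|}} .
\end{equation*}
This no longer depends on $\bar\varphi$, which already yields the $\bar\varphi$-independence claimed in the corollary; and since $\v x$ is not on the sphere, $|z|\neq a$, so $\lambda>1$ strictly. (In the notation of the proof of \cref{lemma:thetaroot_given_phi}, here $\tilde\rho=|z|$ and $\tilde y=0$, and $(a/\tilde\rho+\tilde\rho/a)/2=1$ forces $\tilde\rho=a$; thus $|z|\neq a$ is exactly what keeps $\lambda>1$ and $\theta_{0}$ genuinely non-real.)

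Next I would simplify $\lambda+\sqrt{\lambda^{2}-1}$. From $\lambda=\tfrac12(|z|/a+a/|z|)$ one gets $\lambda^{2}-1=\tfrac14(|z|/a-a/|z|)^{2}$, hence $\sqrt{\lambda^{2}-1}=\tfrac12\,\bigl|\,|z|/a-a/|z|\,\bigr|$, so
\begin{equation*}
  \lambda+\sqrt{\lambda^{2}-1}=
  \begin{cases}
    |z|/a, & |z|>a,\\[2pt]
    a/|z|, & |z|<a .
  \end{cases}
\end{equation*}
Taking logarithms and using $\ln(a/|z|)=-\ln(|z|/a)$, in either case $\pm\ln(\lambda+\sqrt{\lambda^{2}-1})=\pm\ln(|z|/a)$, the $\pm$ absorbing the sign flip (so that the two sign choices recover the conjugate pair whether $|z|\gtrless a$). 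For the real part, \cref{eqn:thetaroot_given_phi} gives $\atan(x\cos\bar\varphi+y\sin\bar\varphi,z)=\atan(0,z)$, which is $0$ for $z>0$; combining, $\theta_{0}=\pm i\ln(|z|/a)$. (For $z<0$ the same steps give $\atan(0,z)=\pi$, i.e. $\theta_{0}=\pi\pm i\ln(|z|/a)$, with the singularity then sitting near the opposite pole $\theta=\pi$; the statement is to be read with this understanding, or simply for $z>0$.)

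I do not expect any genuine obstacle here — the corollary is a one-line substitution into \cref{lemma:thetaroot_given_phi}. The only points needing a little care are (i) verifying that we stay in the regime $\lambda>1$, which is precisely the hypothesis $|z|\neq a$ that $\v x$ is not a pole, so that $\theta_0$ is non-real; and (ii) carrying out the case split $|z|\gtrless a$ together with the sign conventions for $\sqrt{\cdot}$ and for $\atan(0,z)$ when reading off the real and imaginary parts. No root-finding or fixed-point reasoning is involved; everything follows by putting $x=y=0$ in \cref{eqn:thetaroot_given_phi,eqn:thetaroot_given_phi2}.
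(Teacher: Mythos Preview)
Your proof is correct and follows essentially the same route as the paper: substitute $x=y=0$ into \cref{eqn:thetaroot_given_phi,eqn:thetaroot_given_phi2}, compute $\lambda+\sqrt{\lambda^{2}-1}$ via the case split $|z|\gtrless a$, and use $\ln(a/|z|)=-\ln(|z|/a)$ so the $\pm$ absorbs the sign. Your additional remarks on the real part $\atan(0,z)$ for $z<0$ and on excluding $z=0$ are more careful than the paper, which tacitly treats only the $z>0$ pole.
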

\begin{proof} (\cref{corr:thetaroot_given_phi_on_zaxis})
From \cref{eqn:thetaroot_given_phi,eqn:thetaroot_given_phi2}
we have $\sqrt{\lambda^2-1}=\abs{a^2-z^2}/(2a\abs{z})$, and
$\lambda+\sqrt{\lambda^2-1}$ evaluates as $\abs{z}/a$ for $\abs{z}>a$
and $a/\abs{z}$ for $\abs{z}<a$. The result follows since
$\ln(\abs{z}/a)=-\ln(a/\abs{z})$.
\end{proof}
%%%%%%%%%%%%%%%%%%%%%%%%%%%%%%%%%%%%%%%%%%%%%%%%%%%%%%%%%%%%

%%%%%%%%%%%%%%%%%%%%%%%%%%%%%%%%%%%%%%%%%%%%%%%%%%%%%%%%%%%%
%%%%%%%%%%%%%%%%%%%%%%%%%%%%%%%%%%%%%%%%%%%%%%%%%%%%%%%%%%%%
%%%%%%%%%%%%%%%%%%%%%%%%%%%%%%%%%%%%%%%%%%%%%%%%%%%%%%%%%%%%
\subsection{Error estimates for evaluation points close to and on the symmetry axis}
\label{sec:estsymmax}
With $\v \gamma(t,\varphi)=\gammaaxisym(\theta(t),\varphi)$, 
$E^{TZ}(\v\gamma,f,p,n_\varphi,\v x)$ from \cref{est:3D} can be
written as
\begin{align}
  E_{\v \gamma}^{TZ}(f,p,n_\varphi,\v x)&=\frac{4\pi}{ \Gamma(p)}
                                       n^{p-1} \int_{E_1} 
  \abs{f\pars{t,\varphi_0(\theta(t),\v x)} } E_{fac}^{TZ}(\v x,\theta(t))  \dif t ,
  %%\geomfac{2}\pars{t,\varphi_0(t),\v x }^p e^{-n|\Im \varphi_0|}, \dif t ,
    \label{eq:ETZ2}
\end{align}
where
\begin{align}
E_{fac}^{TZ}(\v x,\theta) =   \abs{\geomfac{2}^\circ\pars{\theta,\varphi_0(\theta),\v x }}^p
  e^{-n_\varphi \abs{\Im \varphi_0(\theta,\v x)}}.
\label{eq:EfacTZ_thph}
\end{align}
Here $\varphi_0(\theta,\v x)$ is the root associated with
$\gammaaxisym (\theta,\varphi)$. Similarly,
$\geomfac{2}^\circ\pars{\theta,\varphi,\v x}$ is the second geometry factor
associated with $\gammaaxisym(\theta,\varphi)$, as will be
explicitly defined below.  Note that the
differentiation in \cref{eq:geometry_factor_two} is with respect to
$\varphi$ and hence the mapping of the $t$-coordinate yields no extra
factor. 

As was commented on in \cref{rem:phiroot_given_theta}, the root
$\varphi_0(\theta,\v x)$ is not defined for $\v x=(0,0,z)$, $z \ne
a$. Furthermore, the geometry factor in
$E_{fac}^{TZ}((0,0,z),\theta)$, $z \ne a$ is infinite. To proceed, we
will derive the expression for the general
$E_{fac}^{TZ}(\v x,\theta)$, where $\v x$ is not on the $z$-axis, and
then consider the appropriate limit to understand the behavior for
evaluation points along the $z$-axis.

%%%%%%%%%%%%%%%%%%%%%%%%%%%%%%%%%%%%%%%%%%%%%%%%%%%%%%%%%%%%
%%%%%%%%%%%%%%%%%%%%%%%%%%%%%%%%%%%%%%%%%%%%%%%%%%%%%%%%%%%%
%%%%%%%%%%%%%%%%%%%%%%%%%%%%%%%%%%%%%%%%%%%%%%%%%%%%%%%%%%%%
\begin{theorem}
  Let the axisymmetric surface $\gammaaxisym(\theta,\varphi)$ be
  parametrized as in \cref{eq:gamma_axisymm}.
Given an evaluation point $\v x =(x,y,z)\in \reals^3$, not on $\gammaaxisym$, 
let $R^2(\theta,\varphi,\v x)$ and $\varphi_0(\theta,\v x )$, the root
of $R^2$, be defined as in \cref{lemma:phiroot_given_theta}, and define
$\geomfac{2}^\circ\pars{\theta,\varphi,\v x}=\left(
  \frac{\partial}{\partial \varphi} R^2(\theta,\varphi,\v
  x)\right)^{-1}$.

Let $E_{fac}^{TZ}(\v x,\theta)$ be as in \cref{eq:EfacTZ_thph} with 
$2p \in \zplus$, $n_{\varphi} \in \zplus$, and 
assume that $\rho^2=x^2+y^2>0$. Then for $\theta \in (0,\pi)$ it holds
\begin{align}
  E_{fac}^{TZ}(\v
  x,\theta) & =\abs{\geomfac{2}^\circ\pars{\theta,\varphi_0(\theta, \v x),\v x }}^p
              e^{-n_\varphi \abs{\Im \varphi_0(\theta,\v x)}}
\notag \\
              & =\frac{1}{(\tilde{a}^2+\rho^2+(\tilde{b}-z)^2)^p}
  \left( \frac{\lambda}{\sqrt{\lambda^2-1}} \right)^p
  \left( \frac{1}{\lambda+\sqrt{\lambda^2-1}}  \right)^{n_{\varphi}},
 %% \frac{\left(\lambda-\sqrt{\lambda^2-1}\right)^{n_{\varphi}}}{\left((\sqrt{\lambda^2-1}\right))^p}
\label{eq:EfacTZ}
\end{align}
where $\tilde{a}=a(\theta)\sin(\theta)$, $\tilde{b}=b(\theta)\cos(\theta)$  and $\lambda$ is as defined in \cref{eq:phiroot_given_theta} in 
\cref{lemma:phiroot_given_theta}. We have that $\lambda>1$ since $\v x
=(x,y,z)\in \reals^3$ is not on $\gammaaxisym$.

For $\lambda>\lambda_0>1$, it holds
\begin{align}
  E_{fac}^{TZ}(\v x, \theta) \le \frac{C}{(\tilde{a}^2+\rho^2+(\tilde{b}-z)^2)^p}
  \left( \frac{1}{2\lambda}  \right)^{n_{\varphi}},
  %% \frac{\left(\lambda-\sqrt{\lambda^2-1}\right)^{n_{\varphi}}}{\left((\sqrt{\lambda^2-1}\right))^p}
  \label{eq:EfacBound}
\end{align}
where
$C= (\lambda_0/(\lambda_0-1))^p (2\lambda_0/(2\lambda_0-1))^ {n_{\varphi}}$.
For large $\lambda_0$, $C \approx 1$. 

\label{thm:EfacTZ_est}
\end{theorem}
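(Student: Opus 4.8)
The plan is to compute $E_{fac}^{TZ}(\v x,\theta)$ directly from its definition in \cref{eq:EfacTZ_thph}, using the explicit root $\varphi_0(\theta,\v x)$ supplied by \cref{lemma:phiroot_given_theta}, and then derive the bound \cref{eq:EfacBound} by a simple estimate of the $\lambda$-dependent factors. First I would record from \cref{lemma:phiroot_given_theta} that, with $\tilde a = a(\theta)\sin\theta$, $\tilde b = b(\theta)\cos\theta$, $\rho^2 = x^2+y^2$, the root satisfies $\varphi_0 = \atan(y,x) \pm i\ln(\lambda + \sqrt{\lambda^2-1})$ with $\lambda$ as in \cref{eq:phiroot_given_theta}, so that $\abs{\Im \varphi_0} = \ln(\lambda+\sqrt{\lambda^2-1})$ and hence
\begin{align*}
  e^{-n_\varphi \abs{\Im \varphi_0(\theta,\v x)}} = \left(\frac{1}{\lambda+\sqrt{\lambda^2-1}}\right)^{n_\varphi},
\end{align*}
which already produces the last factor of \cref{eq:EfacTZ}.

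The main work is to evaluate the geometry factor $\geomfac{2}^\circ(\theta,\varphi_0,\v x)$. Since for fixed $\theta$ the surface reduces to the circle $(\tilde a\cos\varphi,\tilde a\sin\varphi,\tilde b)$, I would use the intermediate formula \cref{eq:R2_phi_circle3} from the proof of \cref{lemma:root_circle} (adapted, i.e. $d^2 = \tilde a^2 + \rho^2 + (\tilde b - z)^2$), write $R^2(\theta,\varphi,\v x) = d^2 - 2\tilde a\rho\cos(\varphi - \atan(y,x))$, differentiate in $\varphi$ to get $\partial_\varphi R^2 = 2\tilde a\rho\sin(\varphi - \atan(y,x))$, and then substitute $\varphi = \varphi_0$, i.e. $\varphi_0 - \atan(y,x) = \pm i\ln\beta$ with $\beta = \lambda + \sqrt{\lambda^2-1}$. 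Using $\sin(i u) = i\sinh u$ and $\sinh(\ln\beta) = (\beta - \beta^{-1})/2 = \sqrt{\lambda^2-1}$ gives $\abs{\partial_\varphi R^2(\theta,\varphi_0,\v x)} = 2\tilde a\rho\sqrt{\lambda^2-1}$. Since $\lambda = d^2/(2\tilde a\rho)$, this equals $d^2\sqrt{\lambda^2-1}/\lambda$, so
\begin{align*}
  \abs{\geomfac{2}^\circ(\theta,\varphi_0,\v x)}^p = \left(\frac{\lambda}{d^2\sqrt{\lambda^2-1}}\right)^p = \frac{1}{d^{2p}}\left(\frac{\lambda}{\sqrt{\lambda^2-1}}\right)^p,
\end{align*}
which combined with the previous factor yields \cref{eq:EfacTZ}. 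The assumption $\rho > 0$ and $\theta \in (0,\pi)$ guarantee $\tilde a \rho \neq 0$ so every step is legitimate, and $\lambda > 1$ (hence $\sqrt{\lambda^2-1} > 0$) is exactly the content of \cref{lemma:phiroot_given_theta}.

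For the bound \cref{eq:EfacBound}, I would estimate the two $\lambda$-factors separately for $\lambda > \lambda_0 > 1$. First, $\lambda/\sqrt{\lambda^2-1} = (1 - \lambda^{-2})^{-1/2}$ is decreasing in $\lambda$, so it is at most $\lambda_0/\sqrt{\lambda_0^2-1} \le \lambda_0/(\lambda_0-1)$ (using $\sqrt{\lambda_0^2-1} \ge \lambda_0 - 1$), contributing the factor $(\lambda_0/(\lambda_0-1))^p$. Second, $\lambda + \sqrt{\lambda^2-1} \ge \lambda + (\lambda - 1) = 2\lambda - 1 = 2\lambda(1 - 1/(2\lambda)) \ge 2\lambda(1 - 1/(2\lambda_0))$, hence $(\lambda+\sqrt{\lambda^2-1})^{-n_\varphi} \le (2\lambda)^{-n_\varphi}(2\lambda_0/(2\lambda_0-1))^{n_\varphi}$; collecting these into $C = (\lambda_0/(\lambda_0-1))^p(2\lambda_0/(2\lambda_0-1))^{n_\varphi}$ gives exactly \cref{eq:EfacBound}, and both bracketed quantities tend to $1$ as $\lambda_0 \to \infty$, so $C \approx 1$ for large $\lambda_0$. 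I do not anticipate a genuine obstacle here; the one point requiring slight care is the manipulation of $\sin$ at the complex argument $\varphi_0$ and tracking the absolute value (the sign/branch of $\pm i$ is irrelevant since we take $\abs{\cdot}$), so that the geometry-factor computation cleanly produces the stated closed form.
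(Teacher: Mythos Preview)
Your proof is correct and follows essentially the same route as the paper: both compute $\partial_\varphi R^2$ for the circle $(\tilde a\cos\varphi,\tilde a\sin\varphi,\tilde b)$, evaluate it at $\varphi_0=\nu\pm i\ln\beta$ via the relation $\sinh(\ln\beta)=\sqrt{\lambda^2-1}$, and then use $2\tilde a\rho\lambda=d^2$ to recast the geometry factor in the form \cref{eq:EfacTZ}; your bound argument likewise rests on the same inequality $\sqrt{\lambda^2-1}\ge\lambda-1$ that the paper uses. The only cosmetic difference is that you write $R^2=d^2-2\tilde a\rho\cos(\varphi-\nu)$ directly, whereas the paper carries the expanded $\mu(\varphi)=2(-\cos\nu\sin\varphi+\sin\nu\cos\varphi)$ before simplifying.
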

\begin{proof} (\cref{thm:EfacTZ_est})
Introduce the angle $\nu=\atan(y,x)$ in the $xy$-plane, such that
$x=\rho \cos\nu$, $y=\rho \sin \nu$. With this, we can write
$\partial R^2(\theta,\varphi,\v x)/\partial
  \varphi=\tilde{a}\rho \mu(\varphi)$
%\[
%\frac{\partial R^2(\theta,\varphi,\v x)}{\partial
%  \varphi}=\tilde{a}\rho \mu(\varphi)
%\]
where $\mu(\varphi)=2(-\cos \nu \sin \varphi + \sin \nu \cos
\varphi)$.
Hence
\[
\geomfac{2}^\circ(\theta,\varphi_0,\v x)=(\tilde{a}\rho
\mu(\varphi_0(\theta,\v x)))^{-1}.
\]
Note that dependence on $\theta$ is hidden in
$\tilde{a}=a(\theta)\sin(\theta)$ and $\rho=\sqrt{x^2+y^2}$.
%%and $z$ will only enter in the expression of the root $\varphi_0$.

Now, we first pick the root with the positive imaginary part, and write
$\varphi_0(\theta,\v x)=\nu+i\eta$, where $\eta=\ln \beta=
\ln \left(\lambda+\sqrt{\lambda^2-1}\right)$. We have $\lambda>1$ and
hence $\beta>1$ and $\eta>0$. 
With this, we have
\[
\mu(\varphi_0)=2(-\cos \nu \sin(\nu +i \eta)+\sin \nu \cos(\nu +i
\eta))=\frac{1}{i}\left( e^\eta -e^{-\eta} \right).
\]
For the root with the negative imaginary part, we get
$\mu(\varphi_0)=\left( e^{-\eta} -e^{\eta} \right)/i$. Since the
absolute value will be taken, this will yield the same result in the
end.  We can evaluate
\begin{align}
   E_{fac}^{TZ}(\v
  x,\theta) &=\abs{\geomfac{2}^\circ\pars{\theta,\varphi_0(\theta, \v x),\v x }^p
              \left( e^{- \abs{\Im \varphi_0(\theta,\v
                  x)}}\right)^{n_\varphi}} \notag \\ &=
            \frac{\left( e^{-
                  \eta}\right)^{n_\varphi}}{\left(\tilde{a}\rho
                \left( e^\eta -e^{-\eta} \right) \right)^p}
%\]
%\[
%  E_{fac}^{TZ}(\v x,\theta)
            =\frac{1}{\left(\tilde{a}\rho \right)^p}
    \frac{1}{\left( \beta - 1/\beta\right)^p}\frac{1}{\beta^{n_\varphi}}.
           \end{align}
Replacing $\beta=\lambda+\sqrt{\lambda^2-1}$ and noting that
$\beta-1/\beta=2\sqrt{\lambda^2-1}$
we obtain
\begin{equation}
   E_{fac}^{TZ}(\v x,\theta) =\frac{1}{\left(2 \tilde{a}\rho \right)^p}
   \frac{1}{\left( \sqrt{\lambda^2-1} \right)^p}
   \frac{1}
   {  \left( \lambda+\sqrt{\lambda^2-1} \right)^{n_\varphi}},
   \label{eq:EfacTZ_ver2}
\end{equation}
which can be rewritten as \cref{eq:EfacTZ}.

We have $\lambda>1$, and 
\begin{align}
\frac{\lambda}{\sqrt{\lambda^2-1}} <
  \frac{\lambda}{\sqrt{\lambda^2-2\lambda +1}}=
  \frac{\lambda}{\sqrt{(\lambda-1)^2}}=\frac{\lambda}{\lambda-1}
  < \frac{\lambda_0}{\lambda_0-1},
\end{align}
where the last step holds true for $\lambda>\lambda_0$ as this
quantity approaches $1$ from above as $\lambda$ increases. 
Similarly
\begin{align}
 \frac{1}{\lambda+\sqrt{\lambda^2-1}}
  <\frac{1}{\lambda+\sqrt{(\lambda-1)^2}}= \frac{1}{2\lambda-1} < \frac{2 \lambda_0}{2\lambda_0-1}\frac{1}{2\lambda}.
\end{align}
Using these inequalities in \cref{eq:EfacTZ}, the result follows. 
\end{proof}

%%%%%%%%%%%%%%%%%%%%%%%%%%%%%%%%%%%%%%%%%%%%%%%%%%%%%%%%%%%%
%%%%%%%%%%%%%%%%%%%%%%%%%%%%%%%%%%%%%%%%%%%%%%%%%%%%%%%%%%%%%
\begin{remark}
$E_{fac}^{TZ}(\v x,\theta)$ uses the root $\varphi_0(\theta,\v x)$, and is
hence not defined for $\rho^2=x^2+y^2=0$ or $\theta=\{0,\pi\}$ (see 
the remark following \cref{lemma:phiroot_given_theta}). 
%With the obtained bound, we can however consider the limit as $\alpha \right
In the next theorem we bound the maximum value of $E_{fac}^{TZ}$ for a
spherical surface as
the evaluation point approaches the $z$-axis. 
\end{remark}
%%%%%%%%%%%%%%%%%%%%%%%%%%%%%%%%%%%%%%%%%%%%%%%%%%%%%%%%%%%%%
%%%%%%%%%%%%%%%%%%%%%%%%%%%%%%%%%%%%%%%%%%%%%%%%%%%%%%%%%%%%
\begin{theorem}
Let the sphere $\gammaaxisym(\theta,\varphi)$ be
parametrized as in \cref{eq:gamma_axisymm} with
$a(\theta)=b(\theta)=a$, and let all else be defined as in 
\cref{thm:EfacTZ_est}.
Given an evaluation point $\v x =(x,y,z)\in \reals^3$ not on $\gammaaxisym$, 
set $m=\|\v x\|/a$, $\alpha=\atan(\sqrt{x^2+y^2},z)$ and
$\beta=\atan(y,x)$ such that 
\begin{equation}
  \v x=\v x(\alpha)= m a \left( \sin(\alpha) \cos(\beta),
    \sin(\alpha) \sin(\beta), \cos(\alpha) \right).
  \label{eqn:xalpha}
\end{equation}

It then follows
\[
\max_{\theta}E_{fac}^{TZ}(\v x(\alpha), \theta) =E_{fac}^{TZ}(\v x(\alpha), \alpha) 
%\]
%and moreover,
%\[
%  E_{fac}^{TZ}(\v x(\alpha), \alpha)
\le
C \frac{m^{n_\varphi}}{a^{2p}}\left(\frac{1}{1-m}\right)^{2n_\varphi}
 \left( \sin^2 \alpha \right)^{n_\varphi},
\]
where the constant $C$ is the same as in \cref{eq:EfacBound}.
Note that the value of $\beta$ does not affect the result due to the
axisymmetry.

\label{thm:EfacTZmax_sph}
\end{theorem}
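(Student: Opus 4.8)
\emph{Proof plan.} The plan is to start from the closed form for $E_{fac}^{TZ}(\v x,\theta)$ in \cref{thm:EfacTZ_est} (namely \eqref{eq:EfacTZ}), specialise it to the sphere $a(\theta)=b(\theta)=a$, and insert the parametrisation $\v x=\v x(\alpha)$ of \eqref{eqn:xalpha}; throughout, $\theta\in(0,\pi)$ and $\alpha\in(0,\pi)$ so that all quantities are defined. With $\tilde a=a\sin\theta$, $\tilde b=a\cos\theta$, $\rho=\sqrt{x^2+y^2}=m a\sin\alpha$ and $z=m a\cos\alpha$, a short computation gives
\[
\tilde a^2+\rho^2+(\tilde b-z)^2=a^2\bigl(1+m^2-2m\cos\theta\cos\alpha\bigr),\qquad 2\tilde a\rho=2m a^2\sin\theta\sin\alpha ,
\]
hence $\lambda=\lambda(\theta)=\dfrac{1+m^2-2m\cos\theta\cos\alpha}{2m\sin\theta\sin\alpha}>1$. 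Using the factorisation $\bigl(\tilde a^2+\rho^2+(\tilde b-z)^2\bigr)^2-(2\tilde a\rho)^2=D_-^2D_+^2$ underlying the proof of \cref{thm:EfacTZ_est}, with $D_\pm(\theta)^2=(\tilde a\pm\rho)^2+(\tilde b-z)^2=a^2\bigl(1+m^2-2m\cos(\theta\mp\alpha)\bigr)$ the squared distances from $\v x$ to the nearest and farthest points of the circle $L_\theta$, a rearrangement of \eqref{eq:EfacTZ} gives the compact form
\[
E_{fac}^{TZ}(\v x(\alpha),\theta)=\frac{1}{\bigl(D_-(\theta)D_+(\theta)\bigr)^{p}}\left(\frac{D_+(\theta)-D_-(\theta)}{D_+(\theta)+D_-(\theta)}\right)^{n_\varphi}.
\]

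The main obstacle is locating the maximum over $\theta$. Geometrically, $D_-(\theta)^2=a^2(1+m^2-2m\cos(\theta-\alpha))$ attains its global minimum $a^2(1-m)^2$ exactly at $\theta=\alpha$, because $L_\alpha$ passes through the point of the sphere nearest to $\v x(\alpha)$, at distance $a|1-m|$. Since $E_{fac}^{TZ}$ blows up as $D_-\to0$ while $(D_+-D_-)/(D_++D_-)$ stays in $[0,1)$ and $D_+$ remains bounded away from zero, one expects the maximum at $\theta=\alpha$; the delicate point is that there is a genuine competition between shrinking $D_-$ and the slower variation of $D_+$ and of the quotient, and one must check that the former wins. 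Concretely I would analyse $\partial_\theta\log E_{fac}^{TZ}=-p\,\partial_\theta\log(D_-D_+)+n_\varphi\,\partial_\theta\log\frac{D_+-D_-}{D_++D_-}$, using $D_-'(\alpha)=0$ and the identity $D_+^2-D_-^2=4m a^2\sin\theta\sin\alpha$ to keep the algebra tractable, and thereby confirm that $\theta=\alpha$ is the global maximiser.

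Finally I would evaluate and bound at $\theta=\alpha$. There $D_-(\alpha)=a|1-m|$ and $D_+(\alpha)^2=a^2\bigl((1-m)^2+4m\sin^2\alpha\bigr)$, so $D_+^2-D_-^2=4m a^2\sin^2\alpha$ and, since $|1-m|+\sqrt{(1-m)^2+4m\sin^2\alpha}\ge 2|1-m|$,
\[
\frac{D_+(\alpha)-D_-(\alpha)}{D_+(\alpha)+D_-(\alpha)}=\frac{4m\sin^2\alpha}{\bigl(|1-m|+\sqrt{(1-m)^2+4m\sin^2\alpha}\bigr)^{2}}\le\frac{m\sin^2\alpha}{(1-m)^2}.
\]
Combining this with $1+m^2-2m\cos^2\alpha\ge(1-m)^2$ (which controls the remaining factor $D_-(\alpha)D_+(\alpha)$, equivalently $\lambda(\alpha)$) and applying the bound \eqref{eq:EfacBound} of \cref{thm:EfacTZ_est} at $\theta=\alpha$ with the same constant $C$, one obtains
\[
\max_\theta E_{fac}^{TZ}(\v x(\alpha),\theta)=E_{fac}^{TZ}(\v x(\alpha),\alpha)\le C\,\frac{m^{n_\varphi}}{a^{2p}}\Bigl(\frac{1}{1-m}\Bigr)^{2n_\varphi}\bigl(\sin^2\alpha\bigr)^{n_\varphi},
\]
the stated estimate. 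The appearance of $\sin^2\alpha$ rather than $\sin\alpha$ is exactly what forces the evaluation to be taken at $\theta=\alpha$, where $\sin\theta=\sin\alpha$; independence of $\beta$ is immediate, since $m$, $\alpha$ and every quantity above are $\beta$-free.
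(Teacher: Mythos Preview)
Your approach mirrors the paper's proof closely: both specialise \eqref{eq:EfacTZ} to the sphere with $\v x=\v x(\alpha)$, assert that the maximum over $\theta$ occurs at $\theta=\alpha$, and then apply the bound \eqref{eq:EfacBound} at $\theta=\alpha$ together with the elementary inequalities $\tilde a^2+\rho^2+(\tilde b-z)^2\ge a^2(1-m)^2$ and $\lambda>(1-m)^2/(2m\sin^2\alpha)$. Your rewriting in terms of $D_\pm$ is a genuinely nicer presentation---the identity $E_{fac}^{TZ}=(D_+D_-)^{-p}\bigl((D_+-D_-)/(D_++D_-)\bigr)^{n_\varphi}$ makes the geometric structure transparent and your direct bound on the quotient is clean---but the logical skeleton is the same. (A small typo: your displayed formula $D_\pm^2=a^2(1+m^2-2m\cos(\theta\mp\alpha))$ has the signs swapped; you use the correct version $D_-^2=a^2(1+m^2-2m\cos(\theta-\alpha))$ afterwards.)

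One caution about the step you flag as delicate. The paper also handwaves this (``the formulas do however get very long''), but your concrete plan---compute $\partial_\theta\log E_{fac}^{TZ}$ using $D_-'(\alpha)=0$---will not in fact show that $\theta=\alpha$ is a critical point. At $\theta=\alpha$ the derivative equals $D_+'(\alpha)\bigl(-p/D_+ + 2n_\varphi D_-/(D_+^2-D_-^2)\bigr)$, and since $D_+'(\alpha)=ma^2\sin(2\alpha)/D_+(\alpha)$ is generically nonzero while the bracket need not vanish, $\theta=\alpha$ is typically not an exact stationary point. The true maximiser is extremely close to $\alpha$ (the large second derivative $D_-''(\alpha)=ma^2/D_-(\alpha)$ confines it to a tiny neighbourhood, and the resulting correction to the bound is negligible), so for the purposes of the estimate both the paper's and your argument are perfectly adequate; just be aware that the identity $\max_\theta E_{fac}^{TZ}=E_{fac}^{TZ}(\cdot,\alpha)$ is an approximation, not a strict equality.
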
 
\begin{proof} (\cref{thm:EfacTZmax_sph})
The point on the surface of the sphere closest to $\v x(\alpha)$ is
  the point with $\theta =\alpha$ at the same azimuthal angle
  ($\varphi=\beta$).  Differentiating
  $E_{fac}^{TZ}(\v x(\alpha), \theta)$ as given in \cref{eq:EfacTZ}
  with respect to $\theta$, one can show that $\theta=\alpha$ yields
  the maximum, as one would expect (the formulas do however get very
  long).
 
Starting with the formula in \cref{eq:EfacBound}, we can evaluate the
expression in the first denominator using \cref{eqn:xalpha} and $\theta=\alpha$,
\[
  \tilde{a}^2+\rho^2+(\tilde{b}-z)^2=
 %% a^2\sin^2(\alpha)+a^2m^2\sin^2(\alpha)+(a\cos \alpha-ma\cos\alpha)^2
  a^2(1+m^2)\sin^2\alpha+a^2(1-m)^2\cos^2 \alpha=a^2(1-m)^2+2a^2m\sin^2\alpha,
\]
and with $\tilde{a}\rho=a^2m\sin^2\alpha$, 
\[
\lambda=\frac{\tilde{a}^2+\rho^2+(\tilde{b}-z)^2}{2 \tilde{a}\rho}=\frac{(1-m)^2}{2m}\frac{1}{\sin^2\alpha}+1.
  \]
Using that  $ \tilde{a}^2+\rho^2+(\tilde{b}-z)^2 \ge a^2(1-m)^2$ and
$\lambda > (1-m)^2/(2m\sin^2\alpha)$, the result
follows. 
\end{proof}
%%%%%%%%%%%%%%%%%%%%%%%%%%%%%%%%%%%%%%%%%%%%%%%%%%
\begin{corollary}
Let all be as in \cref{thm:EfacTZmax_sph}, and specifically
  let the evaluation point $\v x(\alpha)$ be as in \cref{eqn:xalpha}
with $m\ne 1$. 
  As the evaluation point $x$ approaches the $z-axis$ the trapezoidal rule error 
 $E_{\v \gamma}^{TZ}(f,p,n_\varphi,\v x)$ in \cref{eq:ETZ2} vanishes,
 i.e. 
\[
\lim_{\alpha \rightarrow \{ 0,\pi \}}
E_{\v \gamma}^{TZ}(f,p,n_\varphi,\v x(\alpha))=0.
 \]
\label{corr:ErrTZpole}
\end{corollary}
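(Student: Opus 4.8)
The plan is to reduce the whole statement to the geometric decay already proven in \cref{thm:EfacTZmax_sph}. Start from the representation \cref{eq:ETZ2} of $E_{\v\gamma}^{TZ}$ for the axisymmetric (here spherical) parametrization $\v\gamma(t,\varphi)=\gammaaxisym(\theta(t),\varphi)$, with $E_1=[-1,1]$, and bound the integral by the length $2$ of the interval times the supremum of the integrand:
\[
  E_{\v \gamma}^{TZ}(f,p,n_\varphi,\v x(\alpha))
  \;\le\; \frac{8\pi}{\Gamma(p)}\, n_\varphi^{p-1}
  \sup_{t\in[-1,1]} \Big( \abs{f\pars{t,\varphi_0(\theta(t),\v x(\alpha))}}\; E_{fac}^{TZ}(\v x(\alpha),\theta(t)) \Big).
\]
So it suffices to show that this pointwise product tends to $0$ uniformly in $t$ as $\alpha\to\{0,\pi\}$. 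I deliberately keep $\abs f$ and $E_{fac}^{TZ}$ together under one supremum: bounding $\max_t\abs f$ and $\max_\theta E_{fac}^{TZ}$ separately is too lossy, since for the sphere $\lambda\to\infty$ (and with it $\abs{\Im\varphi_0}$) also as $\theta\to\{0,\pi\}$, so $\max_t\abs f$ alone need not be controlled.

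For the geometric part I would use the closed form \cref{eq:EfacTZ} together with the elementary inequalities from the proof of \cref{thm:EfacTZmax_sph}, namely $\lambda/\sqrt{\lambda^2-1}\le 2$ and $\lambda+\sqrt{\lambda^2-1}\ge \lambda$ valid for $\lambda\ge 2$, to obtain for all $\theta\in(0,\pi)$
\[
  E_{fac}^{TZ}(\v x(\alpha),\theta)
  \;\le\; \frac{2^p}{\big(\tilde a^2+\rho^2+(\tilde b-z)^2\big)^{p}}\;\lambda^{-n_\varphi},
  \qquad
  \lambda=\frac{\tilde a^2+\rho^2+(\tilde b-z)^2}{2\tilde a\rho},
\]
with $\tilde a=a\sin\theta$, $\tilde b=a\cos\theta$, $\rho=\sqrt{x^2+y^2}=ma\abs{\sin\alpha}$. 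The hypothesis $\lambda\ge 2$ holds for all $\theta$ once $\alpha$ is close enough to $\{0,\pi\}$: by \cref{lemma:phiroot_given_theta}, $\rho\to 0$ forces $\lambda\to\infty$. Equivalently one can invoke \cref{eq:EfacBound}/\cref{thm:EfacTZmax_sph} directly, which gives $\max_\theta E_{fac}^{TZ}(\v x(\alpha),\theta)\le C\,m^{n_\varphi}a^{-2p}(1-m)^{-2n_\varphi}(\sin^2\alpha)^{n_\varphi}$ with $C$ the fixed constant from \cref{eq:EfacBound} (fixed once $\lambda>\lambda_0$, which again holds near the axis); every factor there other than $(\sin^2\alpha)^{n_\varphi}$ is a constant because $m\ne1$ is fixed, so the maximum alone tends to $0$.

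It remains to control $\abs{f(t,\varphi_0)}$, and this is the only delicate point: as $\alpha\to\{0,\pi\}$ the root $\varphi_0$ runs off to complex infinity, $\abs{\Im\varphi_0}=\ln(\lambda+\sqrt{\lambda^2-1})\to\infty$, so one must use analyticity of $f$ rather than mere smoothness on the real surface. If $f$ is simply bounded along the path traced out by $\varphi_0$, the first two displays already give $E_{\v\gamma}^{TZ}\le \text{const}\cdot(\sin^2\alpha)^{n_\varphi}\to0$ and we are done. More carefully, in the setting of this paper $f(t,\cdot)$ extends analytically to a strip with $\abs{f(t,\nu+i\eta)}\le M e^{N\abs\eta}$ for some fixed azimuthal bandwidth $N$ (truncated spherical-harmonic surface/density, polynomial kernels), hence $\abs{f(t,\varphi_0)}\le M\big(\lambda+\sqrt{\lambda^2-1}\big)^{N}\le M(2\lambda)^{N}$ along the root. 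Multiplying the two bounds and using $\lambda^{-1}=2\tilde a\rho/(\tilde a^2+\rho^2+(\tilde b-z)^2)$ gives, for $n_\varphi>N$,
\[
  \abs{f(t,\varphi_0)}\,E_{fac}^{TZ}(\v x(\alpha),\theta)
  \;\le\; M\,2^{N+p}\,
  \frac{(2\tilde a\rho)^{\,n_\varphi-N}}{\big(\tilde a^2+\rho^2+(\tilde b-z)^2\big)^{\,n_\varphi-N+p}} .
\]
Here $\tilde a\rho\le ma^2\abs{\sin\alpha}\to 0$, while, since $\v x(\alpha)\to(0,0,\pm ma)$ with $m\ne1$, for $\alpha$ small enough $\tilde a^2+\rho^2+(\tilde b-z)^2=a^2+z^2-2az\cos\theta+\rho^2\ge(a-\abs z)^2\ge\tfrac12 a^2(1-m)^2>0$ uniformly in $\theta$; thus the right-hand side is at most $c(a,m,p,N,n_\varphi)\,\abs{\sin\alpha}^{\,n_\varphi-N}$, which is independent of $t$ and vanishes as $\alpha\to\{0,\pi\}$. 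Substituting back into the first display yields $\lim_{\alpha\to\{0,\pi\}} E_{\v\gamma}^{TZ}(f,p,n_\varphi,\v x(\alpha))=0$.

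The main obstacle, as flagged, is not the geometry — that decay is \cref{thm:EfacTZmax_sph} — but the fact that $f$ is evaluated at a root $\varphi_0$ escaping to complex infinity, so one cannot just pull $\abs f$ out as a bounded constant without an assumption, and $\lambda$ is not uniformly bounded in $\theta$ on the sphere. The resolution is that in the integrand the growth $\lambda^{N}$ of $\abs{f(t,\varphi_0)}$ is dominated by the decay $\lambda^{-n_\varphi}$ from $E_{fac}^{TZ}$ precisely when the azimuthal resolution exceeds the azimuthal bandwidth of $f$ — the regime assumed in \cref{rem:n_inf} — with everything else (the factor $(\tilde a^2+\rho^2+(\tilde b-z)^2)^{-1}$, the constant $C$) harmlessly bounded away from the poles once $m\ne1$.
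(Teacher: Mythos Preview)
Your argument follows the paper's route: bound $E_{\v\gamma}^{TZ}$ by extracting the geometric factor $E_{fac}^{TZ}$ from the integral and invoke the decay $\max_\theta E_{fac}^{TZ}(\v x(\alpha),\theta)=E_{fac}^{TZ}(\v x(\alpha),\alpha)\le C(\sin^2\alpha)^{n_\varphi}$ from \cref{thm:EfacTZmax_sph}. The paper does precisely this, writing
\[
  E_{\v\gamma}^{TZ}(f,p,n_\varphi,\v x(\alpha))\le C\, E_{fac}^{TZ}(\v x(\alpha),\alpha),
  \qquad C=\frac{4\pi}{\Gamma(p)}n^{p-1}\!\int_{E_1}\abs{f(t,\varphi_0(\theta(t),\v x))}\,\dif t,
\]
and concluding from $E_{fac}^{TZ}(\v x(\alpha),\alpha)\to 0$.

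Where you diverge --- and are in fact more careful than the paper --- is in refusing to treat this $C$ as a fixed constant. You correctly observe that $C$ depends on $\alpha$ through $\varphi_0$, and that $\abs{\Im\varphi_0}\to\infty$ as $\alpha\to\{0,\pi\}$, so $\abs{f(t,\varphi_0)}$ is not a~priori bounded without an analyticity hypothesis. Your fix (bound $\abs{f(t,\varphi_0)}\le M(2\lambda)^N$ for azimuthal bandwidth $N$, then absorb this into the $\lambda^{-n_\varphi}$ decay whenever $n_\varphi>N$) is the right way to close this, and the resolution condition $n_\varphi>N$ is exactly the regime of \cref{rem:n_inf}. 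The paper's proof, as written, silently assumes $\int\abs{f(t,\varphi_0)}\,\dif t$ remains bounded; your version makes the needed hypothesis explicit and verifies the conclusion under it. One small correction: the elementary inequalities you quote ($\lambda/\sqrt{\lambda^2-1}$ bounded, $\lambda+\sqrt{\lambda^2-1}\ge\lambda$) come from the proof of \cref{thm:EfacTZ_est} (leading to \cref{eq:EfacBound}), not from \cref{thm:EfacTZmax_sph}.
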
 
\begin{proof} (\cref{corr:ErrTZpole})
Using the definition in \cref{eq:ETZ2}, 
 \begin{align*}
  E_{\v \gamma}^{TZ}(f,p,n_\varphi,\v x)&=\frac{4\pi}{ \Gamma(p)}
                                       n^{p-1} \int_{E_1} 
  \abs{f\pars{t,\varphi_0(\theta(t),\v x)} } E_{fac}^{TZ}(\v
                                       x,\theta(t))  \dif t  \\
   & \le  \max_{\theta}  E_{fac}^{TZ}(\v
                                       x,\theta) \, \frac{4\pi}{ \Gamma(p)}
                                       n^{p-1} \int_{E_1} 
     \abs{f\pars{t,\varphi_0(\theta(t),\v x)} }  \dif t . 
 %%  & = C E_{fac}^{TZ}(\v x,\theta)
 %%   \label{eq:ETZ2}
 \end{align*}
 Hence with
 $C=\frac{4\pi}{ \Gamma(p)} n^{p-1}
 \int_{E_1} \abs{f\pars{t,\varphi_0(\theta(t),\v x)} }  \dif t $, it follows
 \[
 E_{\v \gamma}^{TZ}(f,p,n_\varphi,\v x(\alpha)) \le C E_{fac}^{TZ}(\v x(\alpha),\alpha).
 \]
 From \cref{thm:EfacTZmax_sph} it follows that
$ \lim_{\alpha \rightarrow \{ 0,\pi \}} E_{fac}^{TZ}(\v
x(\alpha),\alpha)=0$, and the result follows. 
\end{proof}

%%%%%%%%%%%%%%%%%%%%%%%%%%%%%%%%%%%%%%%%%%%%%%%%%%%%%%%%%%%%%%%%%
\begin{figure}[htbp]
  % matlab/scripts/twodim_est_tests.m
  \centering
  \begin{subfigure}{.32\textwidth}
    \centering
    \includegraphics[width=\textwidth]{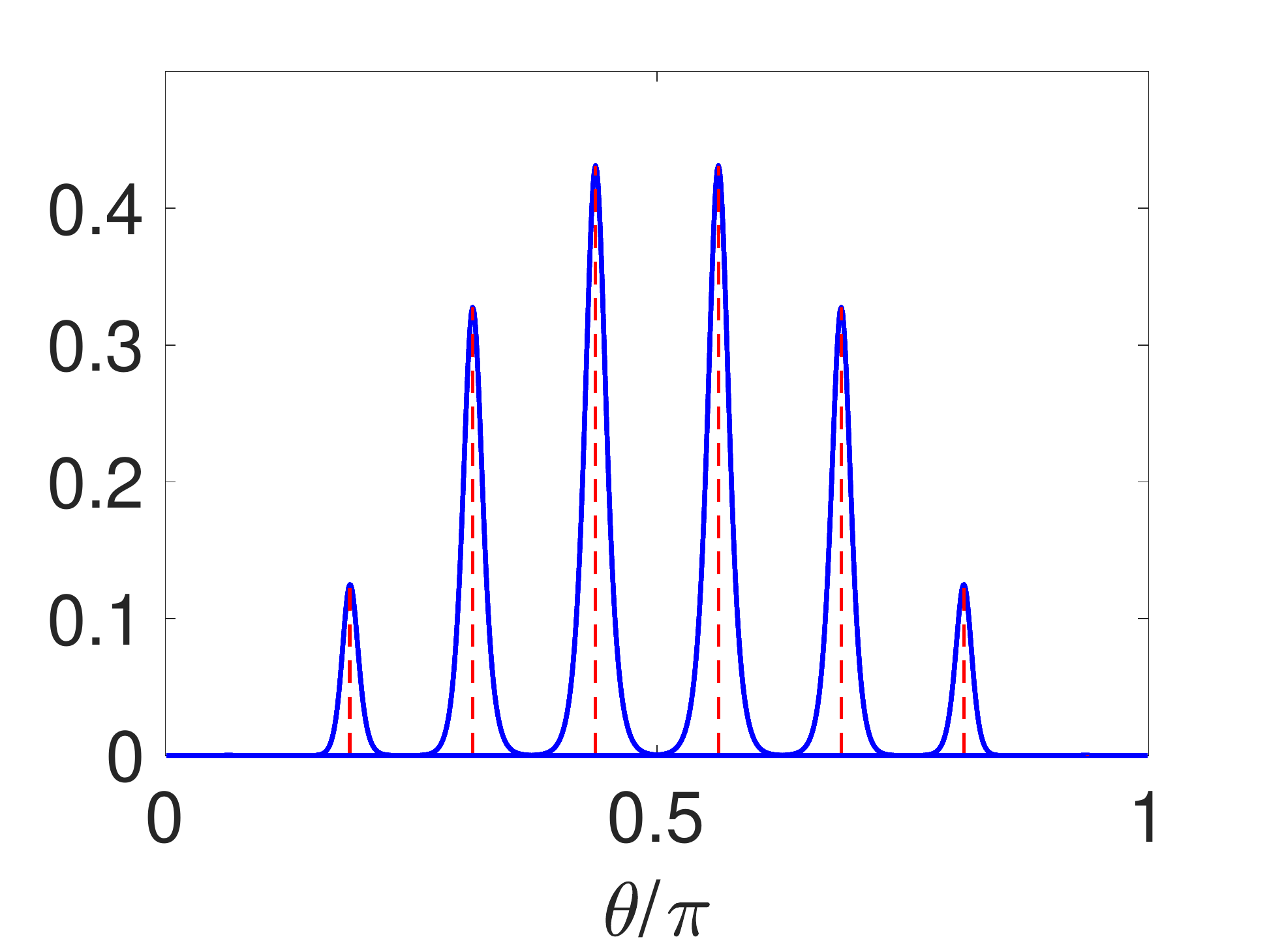}
    \caption{}
%    \label{fig:fig1a}
  \end{subfigure}
   \begin{subfigure}{.32\textwidth}
    \centering
    \includegraphics[width=1\textwidth]{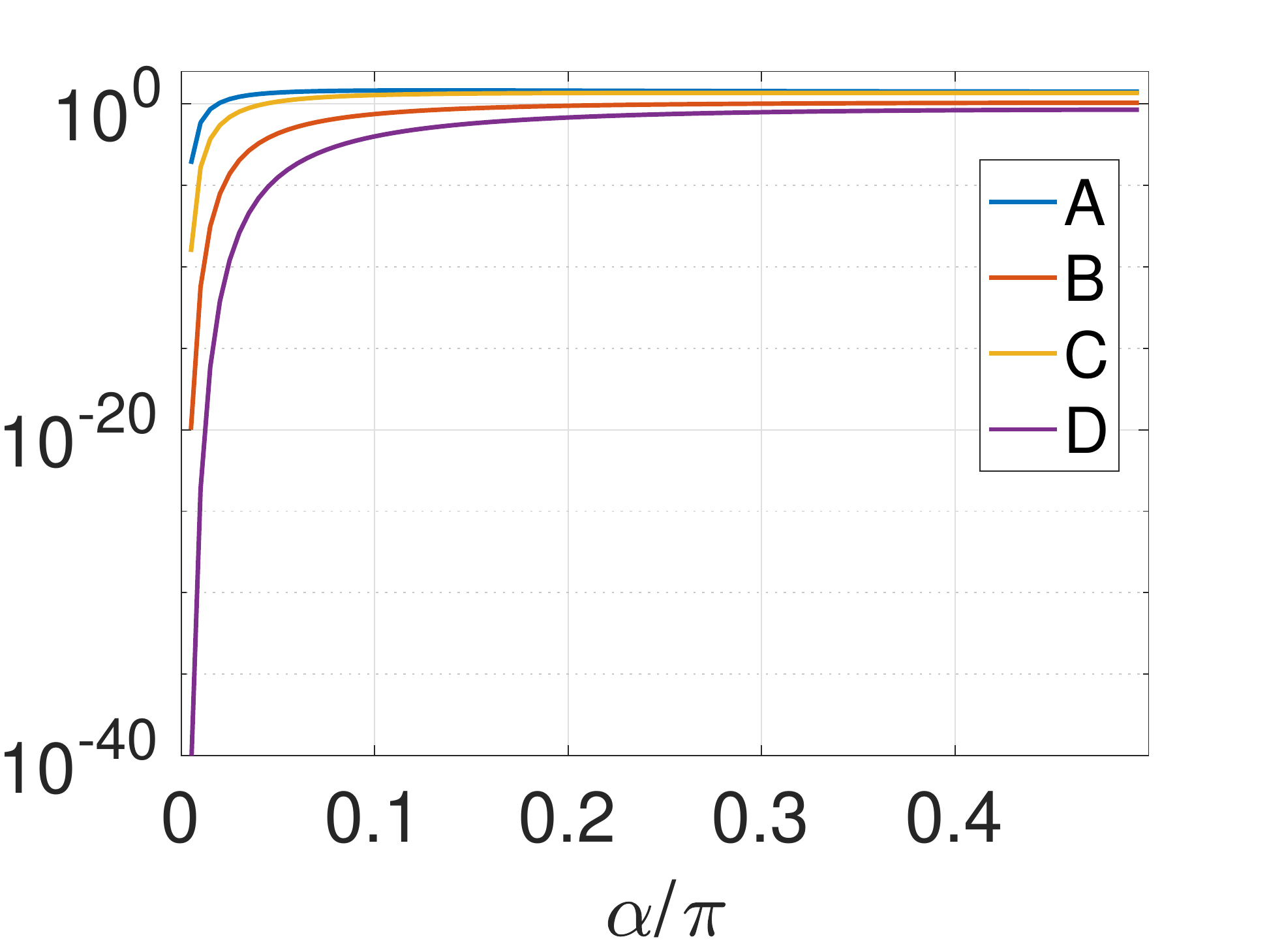}
    \caption{}
 %   \label{fig:fig1b}
  \end{subfigure}
  \begin{subfigure}{.32\textwidth}
    \centering
    \includegraphics[width=1\textwidth]{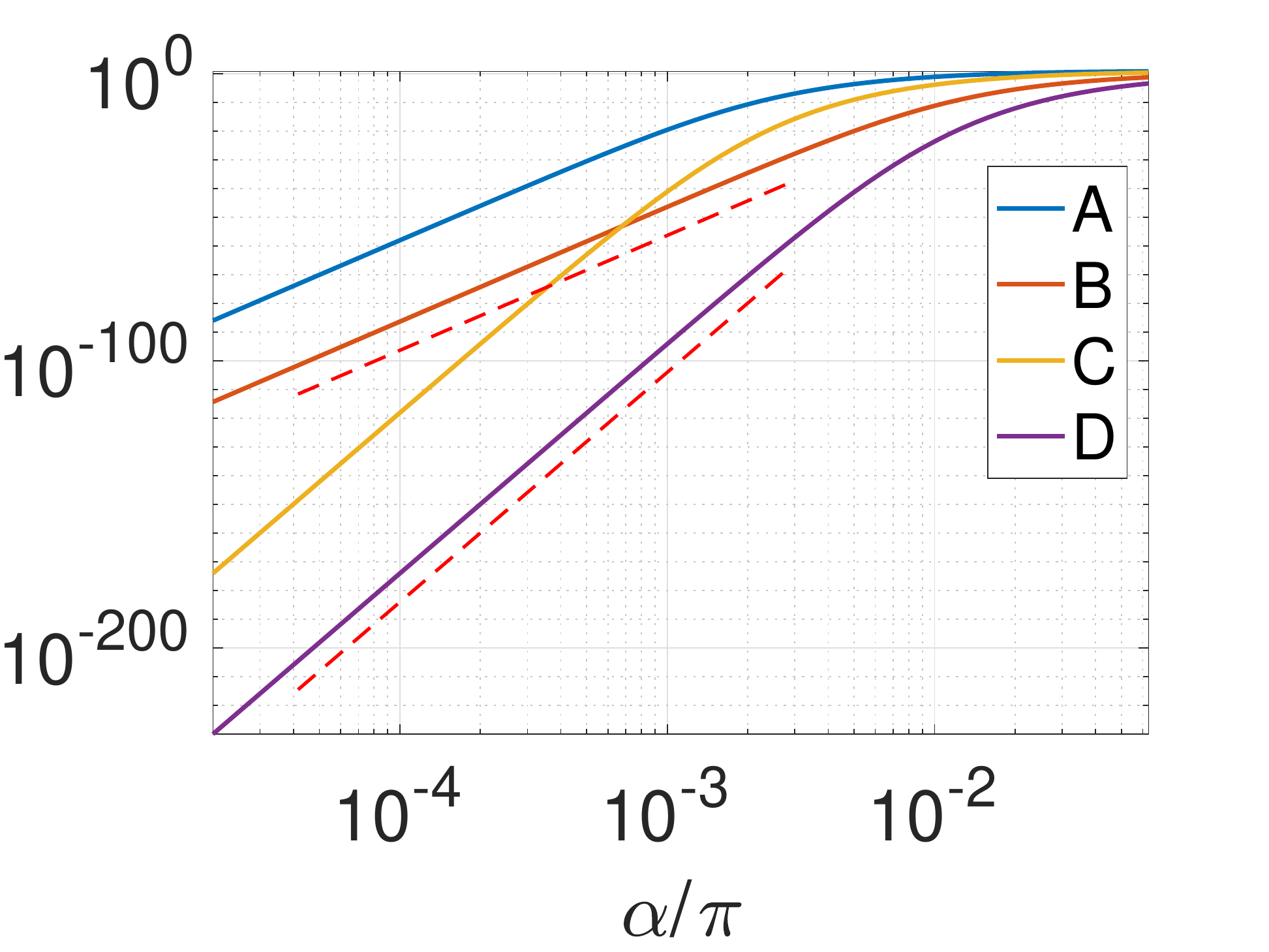}
    \caption{}
   % \label{fig:fig1b}
  \end{subfigure}
 \caption{
Illustration of results related to \cref{thm:EfacTZmax_sph} for a sphere of
radius $a=1$ with $p=1/2$. 
  (a) Plot of $E_{fac}^{TZ}(\v x(\alpha),\theta)$ versus
  $\theta/\pi$ for several values of $\alpha$ with $n_\varphi=20$ and $m=1.05$. Each curve
  peaks at $\theta=\alpha$, marked with a red vertical dashed
  line, and decays rapidly away from $\theta=\alpha$. (b) Plot of $E_{fac}^{TZ}(\v x(\alpha),\alpha)$ versus
  $\alpha/\pi$ for (A): $n_{\varphi}=20$, $m=1.01$, (B):
  $n_{\varphi}=20$, $m=1.05$, (C): $n_{\varphi}=40$, $m=1.01$, (D):
  $n_{\varphi}=40$, $m=1.05$. 
 (c) Same as (b) but with log scale over small values of $\alpha$. The
 two dashed red lines indicate the slopes $\alpha^{2n_\varphi}$
  for $n_{\varphi}=20$ and $40$, respectively. 
  }
\label{fig:EfacTZ}
\end{figure}

%%%%%%%%%%%%%%%%%%%%%%%%%%%%%%%%%%%%%%%%%%%%%%%%%%%%%%%%%%%%%%%%% 
To the left in Fig. \ref{fig:EfacTZ}, we plot $E_{fac}^{TZ}(\v x(\alpha),\theta)$ versus
  $\theta/\pi$ for several values of $\alpha$ for a sphere of radius
  $1$, with $p=1/2$.  $E_{fac}^{TZ}(\v x(\alpha),\theta)$ 
  peaks at $\theta=\alpha$, and decays rapidly away from
  $\theta=\alpha$. The closer the evaluation point is to the surface
  (i.e. the closer $m$ is to $1$), the larger the maximum magnitude
  for a fixed number of discretization points $n_\varphi$. For a fixed
  $m$, the maximum magnitude decreases rapidly with $n_\varphi$. 
In the middle figure, we plot  $E_{fac}^{TZ}(\v x(\alpha),\alpha)$ versus
  $\alpha/\pi$ for the combinations of $n_\varphi=20$ and $40$ and
  $m=1.01$ and $1,05$. In the rightmost plot, we zoom in to see the
  behavior for small values of $\alpha$. As $\sin \alpha \approx
  \alpha$ for small $\alpha$, we expect to see a decay  proportional
  to $\alpha^{2n_\varphi}$, and we indicate these slopes in the plot
  for the two values of $n_\varphi$.

 From \cref{corr:ErrTZpole}, we have the result that the trapezoidal
 rule error $E_{\v \gamma}^{TZ}$ is bounded by a constant times $E_{fac}^{TZ}(\v
 x(\alpha),\alpha)$, and in the plots we can see the fast decay of $E_{fac}^{TZ}(\v
 x(\alpha),\alpha)$ with decreasing $\alpha$.
 
%%%%%%%%%%%%%%%%%%%%%%%%%%%%%%%%%%%%%%%%%%%%%%%%%%%%%%%%%%%%%%%%%
Let us now consider the Gauss-Legendre rule error. 
Write $E_{\v \gamma}^{GL}$
%%$E^{GL}(\v\gamma,f,p,n_t,\v x)$
\cref{eq:EGLdef} from \cref{est:3D} as
 \begin{align}
E_{\v \gamma}^{GL}(f,p,n_t,\v x) &=  \frac{4\pi}{ \Gamma(p)}
                                (2n_t+1)^{p-1}
                                \int_{E_2} 
                                \abs{f\pars{t_0(\varphi, \v x),\varphi}}
                                E_{fac}^{GL}(\v x,\varphi) \dif \varphi ,
\label{eq:EGLdefwEfac}
\end{align}
where
\begin{align}
E_{fac}^{GL}(\v x,\varphi) =   \abs{\geomfac{1}\pars{t_0(\varphi),\varphi ,\v
  x}}^p
%  \abs{ \frac{1}{\sqrt{t_0(\varphi)^2-1}} }^{p-1}
%\abs{ \frac{1}{t_0(\varphi) + \sqrt{t_0(\varphi)+1}
 % \sqrt{t_0(\varphi)^2-1}}}^{2n+1}.
  \frac{ \abs{\sqrt{t_0(\varphi)^2-1}}^{1-p}} {
\abs{   t_0(\varphi) + \sqrt{t_0(\varphi)^2-1}     }^{2n+1}}.
    %\abs{ t_0(\varphi) + \sqrt{t_0(\varphi)^2-1}}}^{2n+1}
\label{eq:EfacGL}
\end{align}

This term can be directly evaluated for points on the symmetry
axis. For an evaluation point $\v x=(0,0,z)$, we find that for a
sphere of radius $a$, 
\begin{align}
E_{fac}^{GL}(\v x,\varphi) =\frac{1}{2a\abs{z}}
\frac{1}{\abs{z^2-a^2}^{p-1}}
  \left(\frac{1}{\delta} \right)^{2n_t+1}
  \label{eqn:EfacGLzaxis}
\end{align}
where $\delta=\abs{z}/a$ if $\abs{z}>a$ and $\delta=a/\abs{z}$ if
  $\abs{z}<a$. We use the result in
  \cref{corr:thetaroot_given_phi_on_zaxis} for this derivation. For
  details, see \cref{app:GLders}. 
 This result is independent of $\varphi$, and hence we have
  \begin{align}
E_{\v \gamma}^{GL}(f,p,n_t,\v x) &=  \frac{4\pi}{ \Gamma(p)}
\frac{1}{2a\abs{z}}
\frac{(2n_t+1)^{p-1}}{\abs{z^2-a^2}^{p-1}}
\left(\frac{1}{\delta} \right)^{2n_t+1}
                                \int_{E_2} 
                                \abs{f\pars{t_0(\varphi, \v x),\varphi}}
                               \dif \varphi .
\label{eq:EGLSphSymmAx}
\end{align}
This means that the
 Gauss-Legendre error $E^{GL}$ will strongly dominate over the
 trapezoidal rule error $E^{TZ}$ for evaluation points close to the symmetry
 axis, and the trapezoidal rule error can hence safely be ignored.

For a general axisymmetric surface, we cannot follow the same approach
as for the sphere, were we could identify the two parametrization
angles for the closest point on the surface, and furthermore show that $E_{fac}^{TZ}(\v x,\theta)$
attains its maximum value for that value of the polar angle $\theta$.
As an evaluation point sufficiently close to a general axisymmetric surface approaches
the $z$-axis, the closest point on the surface will however be the north or
the south pole. Hence, it is interesting to investigate this limit. 
\begin{theorem}
Let $E_{fac}^{TZ}(\v x,\theta)$ be defined as in
\cref{thm:EfacTZ_est}, and assume $z$ of the evaluation point 
$\v x =(x,y,z)\in \reals^3$ such that
$(b(\theta)\cos(\theta)-z)^2 =(\tilde{b}-z)^2>0$ for  $\theta \in \{
[0,\beta]\cup [\pi-\beta, \pi]\}$, for some $\beta>0$. Then for this
range of $\theta$ it holds 
%\begin{align}
%  E_{fac}^{TZ}(\v x) \le \frac{C}{(\tilde{b}-z)^{2(p+n_{\varphi})}}
%  (\tilde{a} \rho)^{n_{\varphi}}.
%\label{eq:EfacBound_arhoX}
%\end{align}
\begin{align}
  E_{fac}^{TZ}(\v x) \le \frac{C}{(\tilde{b}-z)^{2(p+n_{\varphi})}}
  (\rho a(\theta) \sin\theta)^{n_{\varphi}}.
\label{eq:EfacBound_arho}
\end{align} 
In the limit as $\rho \rightarrow 0$ or $\tilde{a}=a(\bar{\theta})
\sin(\bar{\theta}) \rightarrow 0$, or both, we have $ E_{fac}^{TZ}(\v x)
\rightarrow 0$.
\label{thm:EfacTZ_limit}
\end{theorem}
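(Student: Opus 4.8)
The plan is to derive the bound \cref{eq:EfacBound_arho} directly from \cref{eq:EfacBound} in \cref{thm:EfacTZ_est}, using the hypothesis $(\tilde b-z)^2>0$ to discard the nonnegative terms $\tilde a^2,\rho^2$ from the denominators appearing there. Recall that, for $\rho^2=x^2+y^2>0$, $\theta\in(0,\pi)$ and $\lambda>\lambda_0>1$, \cref{eq:EfacBound} states
\[
  E_{fac}^{TZ}(\v x,\theta)\le\frac{C}{\bigl(\tilde a^2+\rho^2+(\tilde b-z)^2\bigr)^p}\left(\frac{1}{2\lambda}\right)^{n_\varphi},\qquad \lambda=\frac{\tilde a^2+\rho^2+(\tilde b-z)^2}{2\tilde a\rho},
\]
with $\tilde a=a(\theta)\sin\theta$ and $\tilde b=b(\theta)\cos\theta$. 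Restricting $\theta$ to $[0,\beta]\cup[\pi-\beta,\pi]$ together with $(\tilde b-z)^2>0$ is exactly what makes the two elementary estimates $\tilde a^2+\rho^2+(\tilde b-z)^2\ge(\tilde b-z)^2>0$ and
\[
  \frac{1}{2\lambda}=\frac{\tilde a\rho}{\tilde a^2+\rho^2+(\tilde b-z)^2}\le\frac{\tilde a\rho}{(\tilde b-z)^2}=\frac{\rho\,a(\theta)\sin\theta}{(\tilde b-z)^2}
\]
available; bounding the first factor of \cref{eq:EfacBound} by $(\tilde b-z)^{-2p}$, raising the second display to the power $n_\varphi$, and multiplying yields $E_{fac}^{TZ}(\v x,\theta)\le C(\tilde b-z)^{-2p}(\tilde b-z)^{-2n_\varphi}(\rho\,a(\theta)\sin\theta)^{n_\varphi}$, which is precisely \cref{eq:EfacBound_arho} with the same constant $C$.

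For the limiting assertion I would first note that, by the hypothesis on $z$ and continuity of $b$ on the compact set $[0,\beta]\cup[\pi-\beta,\pi]$ (in particular $z\ne b(0)$ and $z\ne-b(\pi)$), the quantity $(\tilde b-z)^2$ is bounded below by a fixed positive constant on this $\theta$-range. In each of the regimes $\rho\to 0$, or $\tilde a=a(\bar\theta)\sin\bar\theta\to 0$ (equivalently $\bar\theta\to 0^+$ or $\bar\theta\to\pi^-$, since $a>0$), or both, the product $\rho\,a(\theta)\sin\theta=\rho\tilde a\to 0$ while $\lambda\ge(\tilde b-z)^2/(2\tilde a\rho)\to\infty$, so the threshold condition $\lambda>\lambda_0$ of \cref{eq:EfacBound} is eventually satisfied with any fixed $\lambda_0$ and $C=C(\lambda_0)$ remains a fixed constant. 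Hence the right-hand side of \cref{eq:EfacBound_arho} tends to $0$, and therefore so does $E_{fac}^{TZ}(\v x,\theta)$.

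The computations are routine; the one delicate point --- and what I would flag as the main obstacle --- is the bookkeeping of the constant $C$, since \cref{eq:EfacBound} is valid only once $\lambda$ exceeds some $\lambda_0>1$, so one must verify that the regime of interest actually forces $\lambda$ large, which it does because $(\tilde b-z)^2$ stays bounded below while $\tilde a\rho\to 0$. If instead one wanted \cref{eq:EfacBound_arho} to hold with no implicit lower bound on $\lambda$ at all, one could work from the exact expression \cref{eq:EfacTZ_ver2}, using $1/(\lambda+\sqrt{\lambda^2-1})\le 1/\lambda$ and $\sqrt{\lambda^2-1}=\sqrt{\lambda-1}\,\sqrt{\lambda+1}\ge\sqrt{\lambda-1}$ with $\lambda-1\ge(\tilde b-z)^2/(2\tilde a\rho)$; this gives a bound of the same shape, at the price of replacing the exponent $n_\varphi$ on $\rho\,a(\theta)\sin\theta$ by $n_\varphi-p/2$, which is still enough to conclude the limit but loses the clean exponent and constant.
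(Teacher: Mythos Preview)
Your proof is correct and follows essentially the same route as the paper: both start from \cref{eq:EfacBound}, bound the denominator $\tilde a^2+\rho^2+(\tilde b-z)^2$ below by $(\tilde b-z)^2$ to get $1/(2\lambda)\le \tilde a\rho/(\tilde b-z)^2$, and multiply through. Your treatment of the $\lambda>\lambda_0$ threshold and the constant $C$ is in fact more careful than the paper's, which simply invokes \cref{eq:EfacBound} without flagging this point.
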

\begin{proof} (\cref{thm:EfacTZ_limit})
We start with \cref{eq:EfacBound} and rewrite $\lambda$ as 
$\lambda=(\tilde{a}/\rho+\rho/\tilde{a})/2+(\tilde{b}-z)^2/(2\tilde{a}
\rho)$. As it is assumed  $(\tilde{b}-z)^2>0$, we can use the simple
estimate
\[
\lambda \ge 1+\frac{(\tilde{b}-z)^2}{2 \tilde{a}\rho} >\frac{(\tilde{b}-z)^2}{2 \tilde{a}\rho},
\]
and hence
\begin{align}
  E_{fac}^{TZ}(\v x, \theta) < \frac{C}{(\tilde{a}^2+\rho^2+(\tilde{b}-z)^2)^p}
  \left(  \frac{\tilde{a}\rho}{(\tilde{b}-z)^2}\right)^{n_{\varphi}}
<\frac{C}{(\tilde{b}-z)^{2(p+n)}}
  \left( \tilde{a}\rho \right)^{n_{\varphi}},
    %% \frac{\left(\lambda-\sqrt{\lambda^2-1}\right)^{n_{\varphi}}}{\left((\sqrt{\lambda^2-1}\right))^p}
  \label{eq:EfacBoundZZ}
\end{align}
which is the desired result. 
\end{proof}

For general axisymmetric surfaces, we have not been able to prove that the trapezoidal error
vanishes as the evaluation point approaches the $z$-axis, as we have
done for the sphere in \cref{corr:ErrTZpole}.
From the theorem above, we have the result that $E_{fac}^{TZ}(\v x,
\theta)$ vanishes as $\v x$ approaches the $z$-axis for some range of
$\theta$, under some assumption on the $z$-coordinate. We have however
not proven for which value of $\theta$ the maximum of $E_{fac}^{TZ}(\v x,
\theta)$ is attained. We do conjecture that this $\theta \rightarrow
\{0,\pi \}$ as the evaluation point approaches the $z$-axis for $z<0$
and $z>0$, respectively, as this $\theta$ is the parameter for the
point on the surface closest to the evaluation point.

Numerically, we note that the trapezoidal error contribution to the
total error estimate decays rapidly as the evaluation point approaches
the $z$-axis also for general axisymmetric surfaces. 

\subsection{Simplified error estimate for a spherical surface}
\label{sec:simplified_est}

In this section, we will present a simplified error estimate for a
spherical surface, and then discuss the derivation of it, starting
from \cref{est:3D}.
%%%%%%%%%%%%%%%%%%%%%%%%%%%%%%%%%%%%%%%%%%%%%%%%%%%%%%%%%%%%
\begin{estimate}
Let $\gamma(t,\varphi)=\gammaaxisym(\theta(t),\varphi)$, where
$\gammaaxisym$ is the sphere
parametrized as in \cref{eq:gamma_axisymm} with
$a(\theta)=b(\theta)=a$, and $\theta(t)=\pi-\cos^{-1}(t)$. 
Consider the integral in \cref{eq:3Dpot} with $k \equiv 1$ and $\sigma
\equiv 1$ such that $f(t,\varphi)=\norm{\partial \v\gamma/\partial t
  \times \partial \v\gamma/\partial \varphi}=a^2$, with the evaluation
point $\v x =(x,y,z)\in \reals^3$ not on $\gamma$.

Introduce $\zeta=\|\v x\|=\sqrt{x^2+y^2+z^2}$ and an even integer $n$. 
%%let $n_\varphi=n$ and $n_t=n/2$, $n/2 \in \zplus$.
The error in approximating 
the integral with the $n_t =n/2$ point
Gauss-Legendre rule in the $t$-direction and the $n_\varphi=n$-point trapezoidal
rule in the $\varphi$ direction can
% in the limit $n \rightarrow \infty$
be estimated as 
\begin{align}
E_{sphere}(\zeta,a,p,n)= \frac{8\pi}{ \Gamma(p)}  n^{p-1} \frac{n!!}{(n+1)!!}
  \frac{a^2}{\abs{\zeta^2-a^2}^p} \left( \frac{1}{\delta} \right)^n,
  \label{eqn:Esphere}
\end{align}
 where $\delta=\zeta/a$ if $\zeta>a$ and $\delta=a/\zeta$ if
  $\zeta<a$.
\label{est:Esphere}
\end{estimate}
\begin{remark}
Note that this error estimate only depends on the evaluation point through $\zeta=\|\v x\|$. This means that
the error is estimated to decay equally in all directions as we move
away from the sphere. This is only a good approximation under the
map $t=-\cos(\theta)$, and not under the linear map, as will be
discussed in section \ref{sec:numer-sphere}. 
  \end{remark}

To derive the simplified estimate, 
we will use the assumption that the error only depends on the distance
to the sphere, and will pick an evaluation point that yields the
simplest expressions to work with.
We will start by considering the trapezoidal rule error at a point $\v x =(x,y,0)$; in this case $\zeta$ corresponds to the distance from the z-axis that was previously denoted by $\rho$, so we will continue with this notation, assuming
$\rho^2=x^2+y^2\ne a$.
The expression for  $E_{fac}^{TZ}(\v x,\theta)$ is given in
\cref{eq:EfacTZ}, but we will start with the equivalent expression in
\cref{eq:EfacTZ_ver2}.
For this case we obtain
%\[
%  \lambda=\frac{1}{2 \sin \theta} \left(\frac{\rho}{a}+\frac{a}{\rho}
%  \right)=\frac{1}{2 \sin \theta} \left(\delta+\frac{1}{\delta}  \right)
%  \]
  $\lambda=(\rho/a+a/\rho)/ (2 \sin \theta)=(\delta+1/\delta)/ (2 \sin \theta)$
  where we let $\delta=\rho/a$ if $\rho>a$ and $\delta=a/\rho$ if
  $\rho<a$, such that $\delta>1$. With this, we have
  \begin{align}
\lambda^2-1=\left(\frac{1}{2 \sin \theta} \left(\delta-\frac{1}{\delta}
  \right) \right)^2+\frac{\cos^2\theta}{\sin^2 \theta}.
\label{eqn:lambdasq_m1_tz}
  \end{align}
From \cref{thm:EfacTZmax_sph}, we know that $E_{fac}^{TZ}(\v
x,\theta)$ will attain its maximum at $\theta=\pi/2$ for the chosen
evaluation point. At this value of $\theta$, the last term vanishes. 
Ignoring that term we get $\lambda+\sqrt{\lambda^2-1} \approx \delta/\sin
\theta$ with equality at $\theta=\pi/2$. If we use this approximation, and 
evaluate the part of $E_{fac}^{TZ}$ that is taken to the power of $p$
at $\theta=\pi/2$, we obtain
\[
 E_{fac}^{TZ}(\v x,\theta) \approx
 \frac{1}{\abs{\rho^2-a^2}^p}\left(\frac{\sin \theta}{\delta} \right)^n.
  \]
Now, under the map $t=-\cos(\theta)$ an approximation to the
trapezoidal rule error will be
\begin{align}
  E _{\v \gamma}^{TZ}(a^2,p,n,\v x) & \approx
  \frac{4\pi}{ \Gamma(p)}  n^{p-1}  \frac{1}{\abs{\rho^2-a^2}^p}
 \int_{-1}^{1} \left(\frac{\sin \theta(t)}{\delta} \right)^n \, dt \\
 &= \frac{4\pi}{ \Gamma(p)}  n^{p-1}  \frac{1}{\abs{\rho^2-a^2}^p}
  \left(\frac{1}{\delta} \right)^n\int_{-1}^{1} \left( \sqrt{1-t^2} \right)^n\, dt.
\end{align}
Under the coordinate transformation $t=\sin \beta$, the integral can be written as
\begin{align}
\int_{-\pi/2}^{\pi/2} (\cos \beta)^{n+1} \, d\beta.
\label{eqn:cosbetaint}
\end{align}
Using that for
$q>1$,
\[
\int_{-\pi/2}^{\pi/2} (\cos \beta)^{q} \, d\beta=\frac{q-1}{q} \int_{-\pi/2}^{\pi/2} (\cos \beta)^{q-2} \, d\beta,
  \]
  we obtain
  \[
\int_{-1}^{1} \left(1-t^2\right)^{n/2}\, dt=2
\frac{n(n-2)\cdots 2}{(n+1)(n-1)\cdots 3}=\frac{2n!!}{(n+1)!}
\]
and in total we get 
\begin{align}
  E _{\v \gamma}^{TZ}(a^2,p,n,\v x)  \approx
  \frac{8\pi}{ \Gamma(p)}  n^{p-1} \frac{n!!}{(n+1)!} \frac{a^2}{\abs{\rho^2-a^2}^p}
  \left(\frac{1}{\delta} \right)^n.
\label{eqn:sph_ETZ_approx}
\end{align}

We will now continue by estimating the size of the Gauss-Legendre rule
error. 
We write $E _{\v \gamma}^{GL}$ from \cref{est:3D} as in \cref{eq:EGLdefwEfac}, with
$E_{fac}^{GL}(\v x,\varphi)$ defined in \cref{eq:EfacGL}.

Again, picking an evaluation point at the equator, we can derive the approximation
\begin{align}
  E _{\v \gamma}^{GL}(a^2,p,n,\v x)  \approx
  \frac{8\pi}{ \Gamma(p)}  (n+1)^{p-1} \frac{n!!}{(n+1)!} \frac{a^2(\rho/a+a/\rho)}{\abs{\rho^2-a^2}^p}
  \left(\frac{1}{\delta} \right)^{n+1},
\label{eqn:sph_EGL_approx}
\end{align}
where $\rho=\|\v x\|$. The details of this derivation is given in
\cref{app:GLders}. 

Comparing  \cref{eqn:sph_ETZ_approx} ($n=n_\varphi$) and
\cref{eqn:sph_EGL_approx} ($n=2n_t$), the expressions are very
similar.
The ratio is $E _{\v \gamma}^{GL}/E _{\v \gamma}^{TZ}=((n+1)/n)^{p-1}(1+1/\delta^2)$.

This means that the contributions of the two errors are of about equal size for
evaluation points in the $xy$-plane at $z=0$, as opposed to
the case of evaluation points at the $z$-axis, where the contribution
for the trapezoidal rule  $E _{\v \gamma}^{TZ}$ vanishes. The total error is however
approximately equal for the same $\zeta=\|\v x\|$.
Since there is some overestimation of the errors, we have chosen to
estimate the full error as a function of $\zeta$ by the derived
expression for the trapezoidal rule error at the equator, as is given in
\eqref{eqn:Esphere} in \cref{est:Esphere}. The accuracy of this estimate
will be numerically evaluated in \cref{sec:numer-sphere}.

\section{Numerical evaluation of the error estimate} 
\label{sec:numeval}
An estimate $\Eest$ for the quadrature error $\EQQ$ in the evaluation of a layer
potential was given in \cref{est:3D}. The integrals in
\eqref{eq:ETZdef}-\eqref{eq:EGLdef} adding up to $\Eest$ can however not be evaluated
analytically, and we need to introduce an approximation that is
sufficiently precise and computationally cheap to evaluate. For
general surfaces, we do  not have access to analytical expressions for
the roots appearing in the estimate. Hence, they must be computed
using a numerical root finding procedure, and it will be of interest
to minimize the number of root evaluations.

The integrand in \eqref{eq:ETZdef} is not well defined for evaluation
points along the symmetry axis for an axisymmetric surface, as was
discussed in section \ref{sec:estsymmax}. We proved that the
contribution from the trapezoidal error \eqref{eq:ETZdef} vanishes in
the limit of the target point approaching the symmetry axis of a
sphere (Corollary \ref{corr:ErrTZpole}).
For a general axisymmetric surface, we were able to prove only a
weaker result but we
quantified the contribution of the trapezoidal error \eqref{eq:ETZdef} compared to the
contribution of the Gauss-Legendre error \eqref{eq:EGLdef} by
numerical experiments. 
Also here we find that the first quantity decays very rapidly as the
evaluation point approaches the z-axis, depending also on the distance
of the evaluation point from the surface. To be more precise, the problematic target points lie in the region
where $\rho \rightarrow 0$ or/and
$a(\theta)\sin(\theta) \rightarrow 0$ (see Theorem
\ref{thm:EfacTZ_limit}) geometrically represented by the cones with
apices at the poles and increasing width inside or outside the surface
respectively for the interior or exterior problem. To see why, consider a number of target points placed in the normal direction starting from a grid point $\gamma(t^*,\varphi^*)$; it is clear that they will all refer to the same closest grid point, but $\rho$ (the distance from the z-axis) will increase together with the distance from the surface.
In practical
applications we will safely ignore the trapezoidal rule
contribution for the evaluation points $\v x$ such that
\begin{equation}
\label{eq:cdt_cone}
\frac{\rho}{A}<\frac{K_c \pi}{n_t}\min_{(t,\varphi)} \norm{\v \gamma(t,\varphi) - \v x},
\end{equation} 
where $K_c$ is a fixed constant, $\rho$ the distance to the $z$-axis
and $A$ a representative radius/length scale. This procedure is not
very sensitive to the choice of $A$ and $K_c$, as there is a rather
wide range where the trapezoidal rule error contribution is
negligible but it is still numerically stable to compute the error estimate. 
Practically, to evaluate the distance between the target
point and the surface it is sufficient to approximate the minimum in
\eqref{eq:cdt_cone} by the minimum over the surface grid points only. 

When considering non-axisymmetric surfaces, the situation is much less
predictable and we need a different strategy. This is based on using a
local approximation of the surface centered away from the poles; in
this way all the quantities needed for the estimate evaluation are
locally well defined, and the singularities mentioned above are
eluded. How to numerically compute such an approximation will be further discussed in the next subsections, where we describe how to practically evaluate
the two error contributions \eqref{eq:ETZdef} and
\eqref{eq:EGLdef}, that add up to the total error. We start by the
approximation of the integrals before we discuss the root finding.

\subsection{Approximation of integrals in the error estimate}
\label{sec:errest}
In this section, we discuss how to approximate the integrals in
\eqref{eq:ETZdef}-\eqref{eq:EGLdef}, to be able to efficiently compute
a sufficiently precise estimate $\Eest$ for the quadrature error $\EQQ$ as defined in \cref{est:3D}.

Given an evaluation point $\v x \in \reals^3$, we start by
identifying $\v x^*\in \reals^3$, the closest discrete point on the surface
$\v \gamma$, and the parameters $t^*$, $\varphi^*$ such that $\v \gamma(t^*,
\varphi^*)=\v x^*$. This means that $t^*$ will be one of the $n_t$
Gauss-Legendre quadrature nodes, and $\varphi^*$ will be one of the
$n_\varphi$ (equidistant) trapezoidal rule quadrature nodes.
Loosely speaking, the contribution to the quadrature error will have a
peak around $\v x^*$. What this means is that
$\est^{TZ}(\varphi_0(t, \v x),n_{\varphi},p)$ in the integral over
$t$ in \eqref{eq:ETZdef} will have a peak close to $t=t^*$, and decay
rapidly away from $t^*$ due to the variation in $\varphi_0(t,\v
x)$. Similarly $\est^{GL}(t_0(\varphi,\v x),n_t,p)$ will have a peak
close to $\varphi=\varphi^*$, decaying rapidly away from  $\varphi^*$ due to the
variation in $t_0(\varphi,\v x)$.
Let us here remind that the roots $t_0(\varphi,\v x)$ and $\varphi_0(t,\v x)$ are roots
to $R^2(t,\varphi,\v x)$ with
one variable kept fixed, as defined in \cref{est:3D}, and further
denote 
\begin{align}
  t_0^*=t_0(\varphi^*,\v x), \quad \varphi_0^*=\varphi_0(t^*,\v x).
  \label{eqn:def_starroots}
\end{align}

We now assume that $\est^{TZ}$ is the most rapidly varying factor
in the integrand in \eqref{eq:ETZdef}, and similarly for $\est^{GL}$
in \eqref{eq:EGLdef}, and approximate 
\begin{align}
  E _{\v \gamma}^{TZ}(f,p,n_\varphi,\v x)& \approx
\abs{f\pars{t^*,\varphi_0^*} \geomfac{2}\pars{t^*,\varphi_0^* ,\v x}^p}
 \int_{E_1}  \est^{TZ}(\varphi_0(t, \v x),n_{\varphi},p) \dif t ,
 \label{eq:ETZ_GFconst}\\
  E _{\v \gamma}^{GL}(f,p,n_t,\v x) & \approx
  \abs{f\pars{t_0^*,\varphi^*} \geomfac{1}\pars{t_0^*,\varphi^* ,\v x}^p}
\int_{E_2}   \est^{GL}(t_0(\varphi,\v x),n_t,p) \dif \varphi. 
    \label{eq:EGL_GFconst}
\end{align}
This is typically a good approximation, unless the surface grid is
very stretched such that the grid resolutions on the surface in the two directions
are very different, in which case the geometry factors can vary
rapidly as well.

Now, we want to find a simple expression for how $t_0(\varphi,\v x)$
varies with $\varphi$ around $\varphi=\varphi^*$.
If we replace $\v \gamma$ with its bivariate linear approximation
around $(t^*,\varphi^*)$ in the 
definition of the squared distance function \eqref{eq:R2_def_surface},
we get a quadratic equation that we can solve to find the root. From
here, we find the approximation
\begin{align}
  t_0^L(\varphi) = t^* - \frac{b}{2c} 
  \pm i \frac{\sqrt{4ac-b^2}}{2c} ,
  \label{eq:t0_linear}
\end{align}
where, with $\Delta \varphi=\varphi-\varphi^*$, and $\v r=\v x^* - \v
x=\v \gamma(t^*,\varphi^*)-\v x$,
\begin{align*}
a=a(\Delta \varphi)&= \norm{\v r}^2   + 2 \pars{\v r \cdot \partial_\varphi\v\gamma(t^*,\varphi^*)}\Delta \varphi  + \norm{\partial_\varphi\v\gamma(t^*,\varphi^*)}^2 \Delta \varphi^2, \\
b=b(\Delta \varphi)&=2 \pars{ \v r \cdot \partial_t\v\gamma(t^*,\varphi^*)}  + 2 \pars{\partial_\varphi\v\gamma(t^*,\varphi^*)\cdot \partial_t\v\gamma(t^*,\varphi^*)}\Delta \varphi ,\\
c=c(\Delta \varphi)&= \norm{\partial_t\v\gamma(t^*,\varphi^*)}^2.
\end{align*}
The root $t_0^*$ is by definition the root at
$\varphi=\varphi^*$, and in practice, as will be discussed in the next
subsection, at least an accurate approximation to it, while
$t_0^L(\varphi^*)$ is a simpler approximation. In order to better capture the
magnitude of the peak of  $\est^{GL}(t_0(\varphi,\v x),n_t,p)$, we want to use this more accurate value, but we also want
to use the simple dependence on $\varphi$. This leads us to define
\begin{align}
  \tilde t_0(\varphi) = t_0^*  - t_0^L(\varphi^*) + t_0^L(\varphi).
  \label{eq:combinedapprox}
\end{align}
See \citep{AFKLINTEBERG20221} for more details and a discussion
regarding the effect of making these approximations.

The same approximations can naturally be made to define $\tilde
\varphi_0(t)$,  an approximation to $\varphi_0(t,\v x)$.
Away from $t=t^*$, we have
$\abs{Im \, \tilde \varphi_0(t)} \sim k~\abs{t-t^*}$, where 
$k=\norm{\partial_t\v\gamma(t^*,\varphi^*)}/\norm{\partial_\varphi\v\gamma(t^*,\varphi^*)}$,
and hence $\abs{Im \, \tilde t_0(\varphi)} \sim
k^{-1}~\abs{\varphi-\varphi^*}$. 
This means that $\est^{TZ}(\tilde \varphi_0(t),n_{\varphi},p)$ decays as
\begin{align}
e^{-n_\varphi\abs{Im \, \tilde \varphi_0(t)}} \sim e^{-n_\varphi k~\abs{t-t^*}} .
  \label{eq:trapz_decay}
\end{align}
Given this decay, it is a reasonable approximation to expand 
the interval of integration in \eqref{eq:ETZ_GFconst} from
$[-1,1]$ to $[-\infty,\infty]$, as the tails will be negligible, 
%%Using the approximation $\tilde \varphi_0(t)$
and we approximate the
integral in \eqref{eq:ETZ_GFconst} by
\begin{align}
  %%\int_{E_1}  \est^{TZ}(\varphi_0(t, \v x),n_{\varphi},p) \dif t \approx
  \int_0^\infty \est^{TZ}\pars{\tilde \varphi_0(t^* - s),n_\varphi,p} \dif s
  + \int_0^\infty \est^{TZ}\pars{\tilde \varphi_0(t^* + s),n_\varphi,p} \dif s.
  \label{eq:inf_expand}
\end{align}
With the variable transformation $x=n_\varphi ks$, we can write
\begin{align}
  \int_0^\infty
  \est^{TZ}\pars{\tilde \varphi_0(t^*\pm s)} \dif s  
  &=
    \frac{1}{n_\varphi k}
  \int_0^\infty
 %  \left[
 % \est^{TZ}\pars{\tilde \varphi_0(t^*\pm x/n_\varphi k)}   e^{x}
%    \right]
h^{TZ}_\pm(x)
    e^{-x}  \dif x ,
\end{align}
where $h^{TZ}_\pm(x)=\est^{TZ}\pars{\tilde \varphi_0(t^*\pm x/(n_\varphi k))}   e^{x}$.
Gauss-Laguerre quadrature is a Gaussian quadrature for integrals of
this type \cite[\S3.5(v)]{NIST:DLMF}, and we find that is is sufficiently accurate to
evaluate each of the integrals of $h^{TZ}_+(x)$ and $h^{TZ}_-(x)$ with 
8 quadrature nodes. 

For the Gauss-Legendre estimate, we have the bound
\cite{AfKlinteberg2016quad}
\begin{align}
 \est^{GL}(t_0,n,p) & =
  \frac{4\pi}{ \Gamma(p)}   \abs{ \frac{2n+1}{\sqrt{t_0^2-1}} }^{p-1}
\abs{t_0 + \sqrt{t_0^2-1}}^{-(2n+1)} \notag \\ &\leq  \frac{4\pi}{
                      \Gamma(p)}(2n)^{p-1}e^{-2n\abs{\Im t_0}}. 
\end{align}
Hence, with $t_0(\varphi,\v x)$ approximated with $\tilde
t_0(\varphi)$, we have an estimated decay $e^{-2n_t
k^{-1}~\abs{\varphi-\varphi^*}}$. 
Based on this decay we use the variable transformation
$x=2n_tk^{-1}s$, and write the approximation of the integral in \eqref{eq:EGL_GFconst} 
as
\begin{align}
  \frac{k}{2n_t}
  \left[ \int_0^\infty h^{GL}_-(x)  e^{-x}  \dif x + \int_0^\infty h^{GL}_+(x)  e^{-x} \dif x
  \right],
\end{align}
where  $h^{GL}_\pm(x)=\est^{GL}\pars{\tilde t_0(\varphi^*\pm xk/(2n_t)),n_t,p}   e^{x}$.
Again, each of these integrals is approximated with an $8$-point
Gauss-Laguerre quadrature rule. 

In \citep{AFKLINTEBERG20221}, we used this strategy for the global
trapezoidal rule discretization, and a different strategy for the panel
based Gauss-Legendre quadrature. Here, we have a mix of the two
quadrature rules, but both are used globally on the surface, and we have extended
this approach to be used for both contributions to the error
estimate.
It remains now to discuss the root finding, and the evaluation of the
factors in front of the integrals in 
\eqref{eq:ETZ_GFconst}-\eqref{eq:EGL_GFconst}.

\subsection{Root finding}
\label{sec:rootfinding}
To evaluate the error estimate as described in the previous
subsection, we need to determine
 $t_0^*=t_0(\varphi^*,\v x)$ and $\varphi_0^*=\varphi_0(t^*,\v x)$ as
 defined in \eqref{eqn:def_starroots}. For spherical topologies, these roots are not always well defined, as introduced in \ref{sec:estsymmax} for axisymmetric geometries (where $\varphi_0(\theta,\v x)$ is not defined for evaluation points on the symmetry axis) and further discussed at the beginning of section \ref{sec:numeval} for general surfaces. For axisymmetric geometries we identified the problematic region in the cone defined by eq. (\ref{eq:cdt_cone}): for these evaluation points we can ignore the trapezoidal error contribution and then we do not need to compute the roots $\varphi_0(\theta,\v x)$. We will then not consider these points in the following discussion. For a general surface, it is not so easy the determine a similar set, and we will proceed with a discrete approach as later discussed.

We will consider the $(\theta,\varphi)$ coordinate system, as also used in
  \eqref{eq:gamma_sph_harm} in the definition of a generic surface
  $\gamma^{\circ}(\theta,\varphi)$. In   \eqref{eq:gammast}, we define
$\gamma(t,\varphi)=\gamma^{\circ}(\theta(t),\varphi)$. Hence, once a
root $\theta_0$ has been determined, $t_0$ can be found using the
inverse map from $\theta$ to $t$. 
 
In \cref{sec:analytical}, we derived analytical expressions for the
roots of the squared distance function for special geometries: 
we have analytical expressions for $\theta_0$ only for a spherical
surface, and for $\varphi_0$ for any axisymmetric surface.
Hence, in general we need a numerical procedure to determine the
roots, and we will define this procedure in the $(\theta,\varphi)$
coordinate system.
Given an evaluation point $\v x=(x,y,z)$, we define
\begin{align}
 R^2(\theta,\varphi,\v x)=(\gamma^0_1(\theta,\varphi)-x)^2+(\gamma^0_2(\theta,\varphi)-y)^2+(\gamma^0_3(\theta,\varphi)
  -z)^2.
  \label{eq:R2circ}
\end{align}
Given a parametrization
$\v \gamma^{\circ}(\theta,\varphi)$, it is easy to solve
$R^2(\theta,\varphi,\v x)=0$ using a one dimensional Newton's method
to find a root $\varphi_0$ given $\theta$, or similarly, to
find a root $\theta_0$ given $\varphi$.
Specifically, in our setting, we need to determine 
$\theta_0^*=\theta_0(\varphi^*,\v x)$ and
$\varphi_0^*=\varphi_0(\theta^*,\v x)$, where $\theta^*=\theta(t^*)$. 
We typically use an  initial guess of $\theta^*+i/10$ for $\theta_0^*$, and
correspondingly for $\varphi_0^*$.
The iterations then usually converge with a strict tolerance in less
than five iterations. However in rare cases, usually for evaluation
points far away from the surface, it may happen that the iterations
fail to converge,
but most of the times it is sufficient to increase the magnitude of the
imaginary part of the initial guess for the iterations to converge
well. 
%change the
%initial guess using a larger imaginary part to avoid these problems.

%%If we are dealing with a general non-axisymmetric geometry, or i
If no parametrization is available and we know only the quadrature
node values of $\v \gamma^{\circ}$ at the $n_t \times n_\varphi$ nodes,
we need to define an approximation
$\v \tilde \gamma^{\circ}(\theta,\varphi)$ that can be evaluated at
different arguments of $\theta$ and $\varphi$, 
%either arbitrary argument of $\theta$ (at least in a neighbourhood of
%$\theta^*$) or of $\varphi$,
and that allows for differentiation to formulate Newton's method.
One way is to compute the spherical harmonics
coefficients using a discrete transform \cite{Mohlenkamp99}.
Evaluating the spherical harmonics expansion is however a global procedure with
$O(n_t \, n_\varphi)$ cost for arbitrary arguments, and this would
need to be done at each step in the Newton iteration.

We can however use a local approach. 
In the Newton iteration, one of the variables, $\theta$ or $\varphi$
will be fixed. Assume that $\varphi=\varphi^*$ and introduce a 
$q$th order Taylor expansion in $\theta$, around $\theta=\theta^*$, 
\begin{align}
  \v{\tilde\gamma^\circ}(\theta,\varphi^*) = 
  \sum_{j=0}^q   
  \frac{ \pars{\theta-\theta^*}^j }{j!}
  \dpd[j]{\v{\gamma^\circ}}{\theta} (\theta^*,\varphi^*).
  \label{eq:gamma_taylor_3d_theta}
\end{align}
This approximation can be used in Newton's method to determine an
approximation to the
root $\theta_0^*=\theta_0(\varphi^*,\v x)$. Similarly, a Taylor
expansion in $\varphi$ can be introduced to obtain an approximation to
$\varphi_0^*=\varphi_0(\theta^*,\v x)$.
This is a solid strategy that eludes the pole singularities for any
kind of geometry. Indeed, since the discretization is based on
Gauss-Legendre nodes in the polar angle, the closest grid point where
the expansion \eqref{eq:gamma_taylor_3d_theta} is centered, will never
be a pole, avoiding the above mentioned problems.

For this reason, we will use this approach for non-axisymmetric
surfaces, even if we have a parametrization of $\v \gamma^\circ$
available. In this case, the analytical expression for
$\v \gamma^\circ$ can however be used to determine the $q$ first
partial derivatives of $\v \gamma^\circ$.  When this is not the case,
the derivatives both with respect to $\theta$ and $\phi$, can be
evaluated e.g from a spherical harmonics expansion. The advantage
compared to using a global expansion is that we can evaluate these
derivatives at all grid points in one sweep
\cite{Sorgentone2018167,Schaeffer}, and then use different
local expansions when estimating the quadrature error for different
evaluation points.

Once the roots have been determined, these Taylor expansions can also
be used to evaluate the geometry factors in
\eqref{eq:ETZ_GFconst}-\eqref{eq:EGL_GFconst} as defined in
\eqref{eq:geometry_factor_one}-\eqref{eq:geometry_factor_two}.
The roots $t_0^*$ and $\varphi_0^*$ are also needed to evaluate $f$ in
\eqref{eq:ETZ_GFconst}-\eqref{eq:EGL_GFconst}. We recall that $f$
depends on the density $\sigma$, which may not be known analytically
but be available at the grid points only (e.g. if $\sigma$ is a
solution to a discretized integral equation). In this case, again, $f$ can
either be approximated locally by a Taylor expansion, or globally by a
spherical harmonics expansion.

\newpage

\section{Numerical experiments}
\label{sec:numer-exper}
In this section, we will compare the quadrature error estimate $\Eest$
defined in \eqref{eq:ETZplusEGL}, and evaluated as discussed in the
previous section, to the measured error $\EQQ$ as defined in
\eqref{eq:EQdef} for some different examples. The measured error $\EQQ$ will be computed by using a reference solution on an upsampled grid with upsampling rate set to five. 

As outlined in Remark
\ref{rem:n_inf}, we will see that the estimate provides a good
approximation of the error also for moderate values of $n_t$ and
$n_\varphi$ as long as the geometry and the layer density are well
resolved. We choose $n_t$ and $n_\varphi$ so that this is true for all
the following numerical examples. For the rootfinding procedure, we
follow the strategy presented in the previous section: the analytical
expression for $\v\gamma\circ$ is used when dealing with axisymmetric
surfaces (excluding the trapezoidal error contribution for target
points defined in \eqref{eq:cdt_cone}), and $\v\gamma^\circ$ is
locally approximated with a Taylor expansion for other geometries. In
the first case, the parameters defining the cone in
\eqref{eq:cdt_cone} will be kept fixed to $A=1$ and $K_C=10$. In the
latter case, the order of the Taylor expansion will be fixed as $q=4$
(see eq. \eqref{eq:gamma_taylor_3d_theta}), which is accurate enough
for our purposes; an analysis of how the choice of $q$ can affect the
accuracy of the roots can be found in \citep{AFKLINTEBERG20221}. In
all the presented numerical tests we will use the analytical
expression for the density.

%studying the effect of $\sigma$ and of its approximations goes beyond
%the scope of this paper, and it is left for future studies.
%Move this on the density to future prespectives?

%% \label{eq:EQdef} \label{eq:ETZplusEGL}    \EQwarg \Eestwarg 

\subsection{A sphere}
\label{sec:numer-sphere}
In the first example we consider the harmonic single layer potential
\begin{equation}
u(\v x) = \int_{\Surf} \frac{\sigma(\v y)
  }{\norm{\v y - \v x}} \dif S(\v y),
  \label{eq:harmonic_sl}
\end{equation} 
evaluated near a sphere of radius $a=1$ with unit density,
$\sigma(\bf{x})=1$. We want to compare the estimated error $\Eest$ to the
actual measured error $\EQQ$, using the full error estimate in 
\eqref{eq:ETZplusEGL}, approximated as described in the previous
section, and, for the cosine map, also the simplified error estimate \eqref{est:Esphere},
derived in Section \ref{sec:simplified_est}.
For this case, referring to eq. \eqref{eq:ETZplusEGL}, $p=1/2$ and $f$ simplifies to
$$f(t,\varphi)= \norm{\dpd{\v\gamma}{t} \times \dpd{\v\gamma}{\varphi}}=\begin{cases}
\frac{\pi}{2}\sin((t+1)\frac{\pi}{2}), \text{ if using the linear mapping \eqref{eq:linear_map}}\\
1, \text{ if using the cosine mapping \eqref{eq:cos_map}.}
\end{cases}$$

Fig. \ref{fig:fig1} shows the resulting surface grids and the
different behavior of the quadrature error exterior to the sphere
with discretizations using the linear and non-linear mapping
$\theta(t)$ as given in \eqref{eq:linear_map}-\eqref{eq:cos_map}, with
$n_t=30$ and $n_\varphi=60$ points.
\begin{figure}[htbp]
  % matlab/scripts/twodim_est_tests.m
  \centering
  \begin{subfigure}{.4\textwidth}
    \centering
    \includegraphics[width=\textwidth]{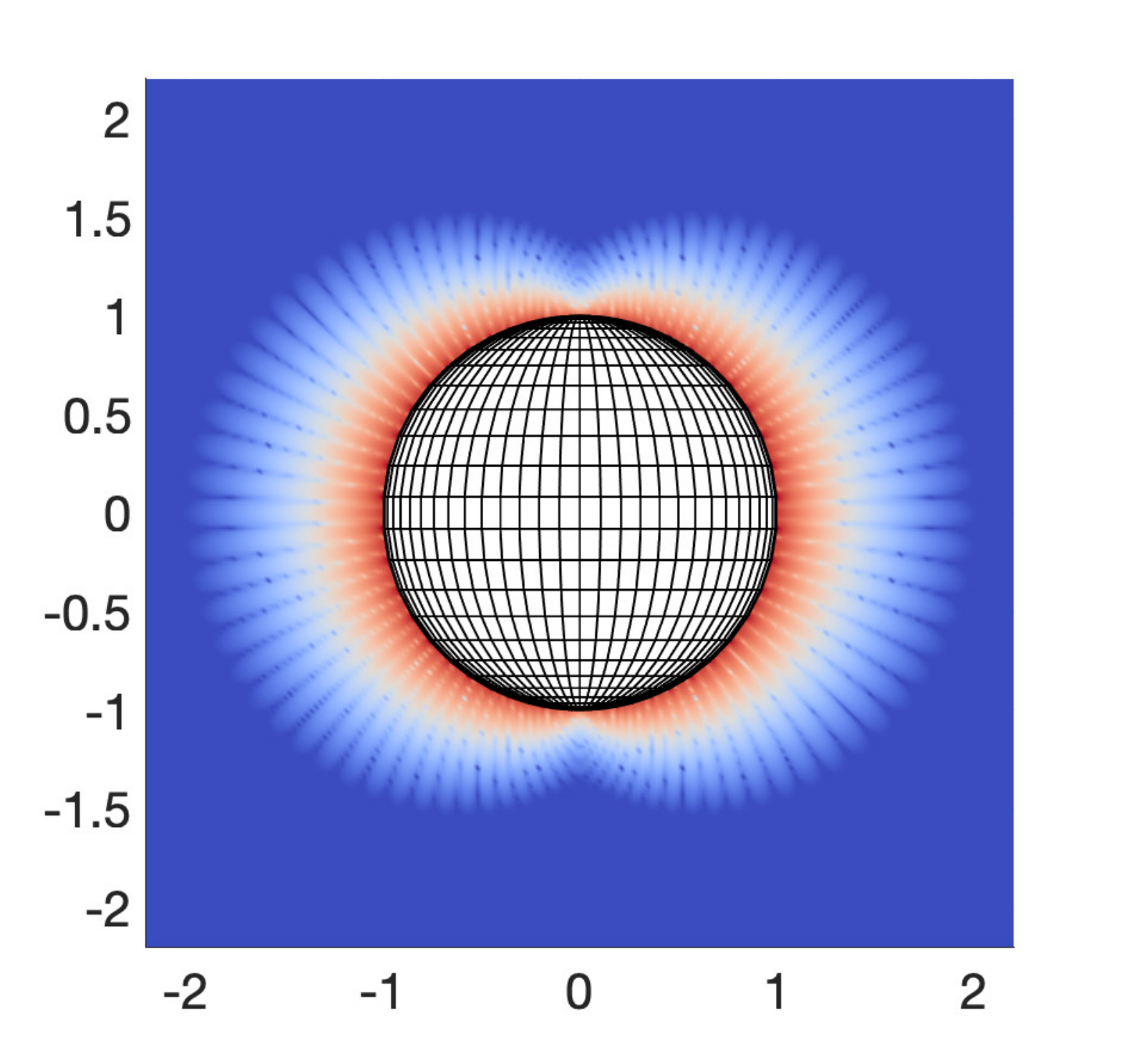}
    \caption{}
    \label{fig:fig1a}
  \end{subfigure}
   \begin{subfigure}{.4\textwidth}
    \centering
    \includegraphics[width=1\textwidth]{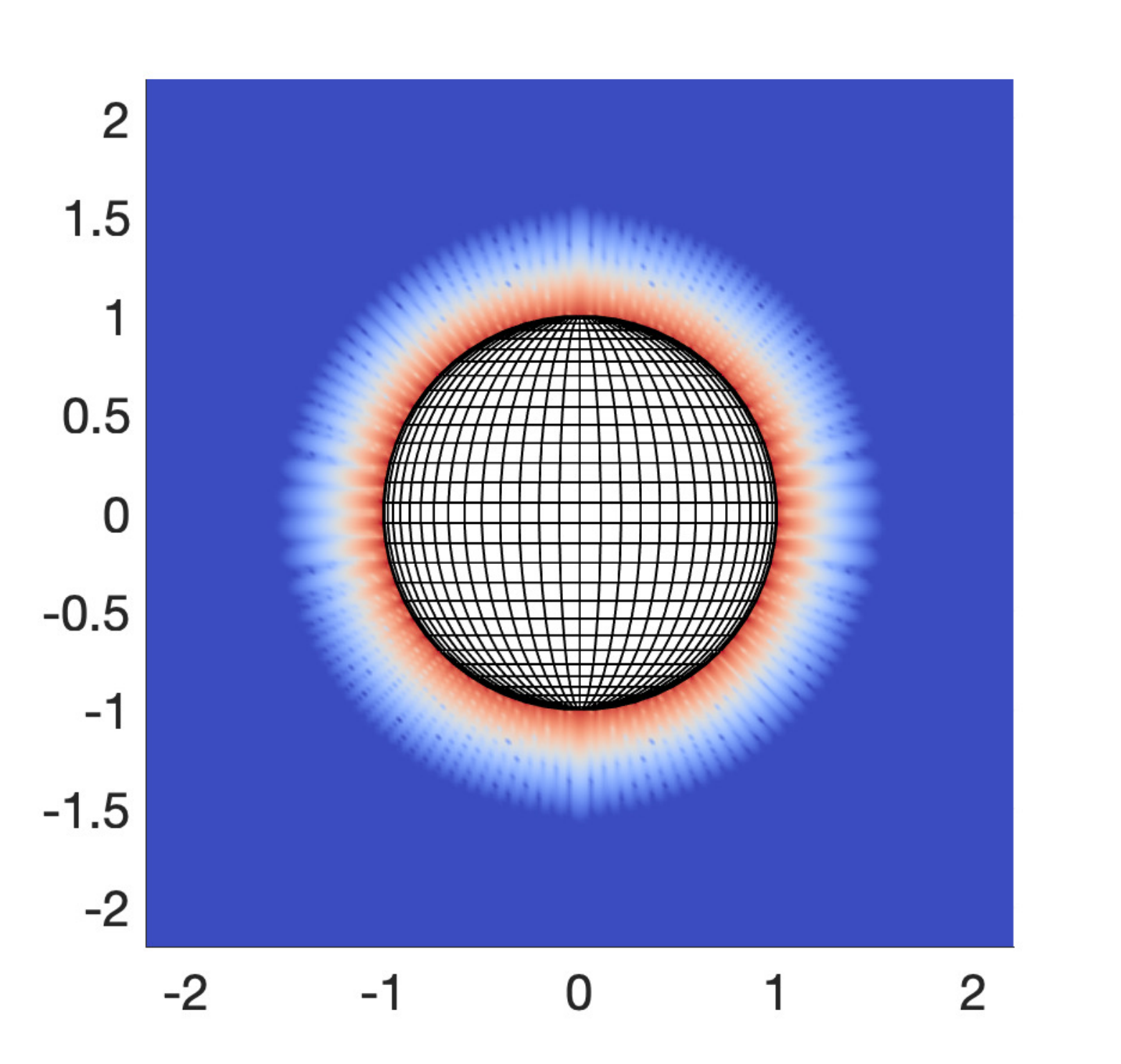}
    \caption{}
    \label{fig:fig1b}
  \end{subfigure}
  \begin{subfigure}{.0615\textwidth}
    %\centering
    \includegraphics[width=1.39\textwidth]{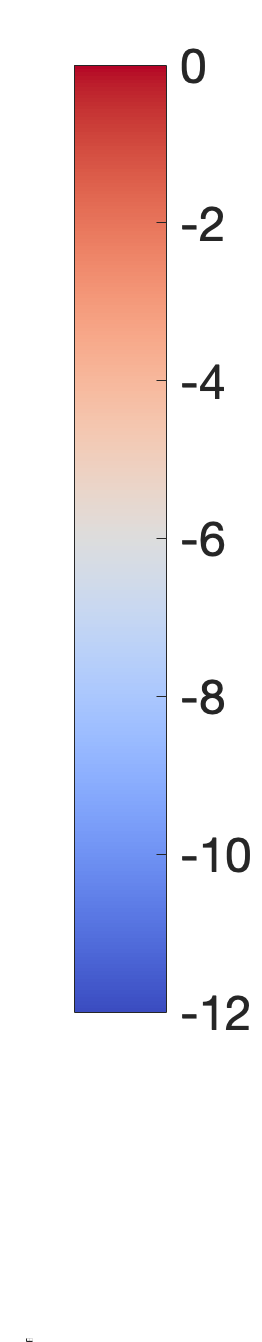}
    %\caption{ciao2}
   % \label{fig:fig1b}
  \end{subfigure}
 \caption{Error $\EQQ$ ($\log 10$ scale) in computing the harmonic single layer potential with
   unit density on a plane at $y=0$ cutting a sphere of radius 1
   discretized by using (a) the linear mapping, (b) the cosine
   mapping. In both cases, $n_t=30$ and $n_\varphi=60$. }
\label{fig:fig1}
\end{figure}
The linear map clusters the grid points more towards the poles, and at
a fixed distance from the sphere, the error is smaller in these
regions, while the cosine mapping yields a more even error. In both
cases we see oscillations in the error on a length scale of the grid
size.

In Fig. \ref{fig:fig1p2}, the full error estimate $\Eest$ from
\cref{est:3D}, approximated as described in the previous section, is
compared to the measured errors $\EQQ$. The measured errors ($\log 10$
scale) are shown in one selected plane as colored fields, with the
contours of the estimates drawn in black.
\begin{figure}[htbp]
  % matlab/scripts/twodim_est_tests.m
  \centering
  \begin{subfigure}{.43\textwidth}
    \centering
    \includegraphics[width=\textwidth]{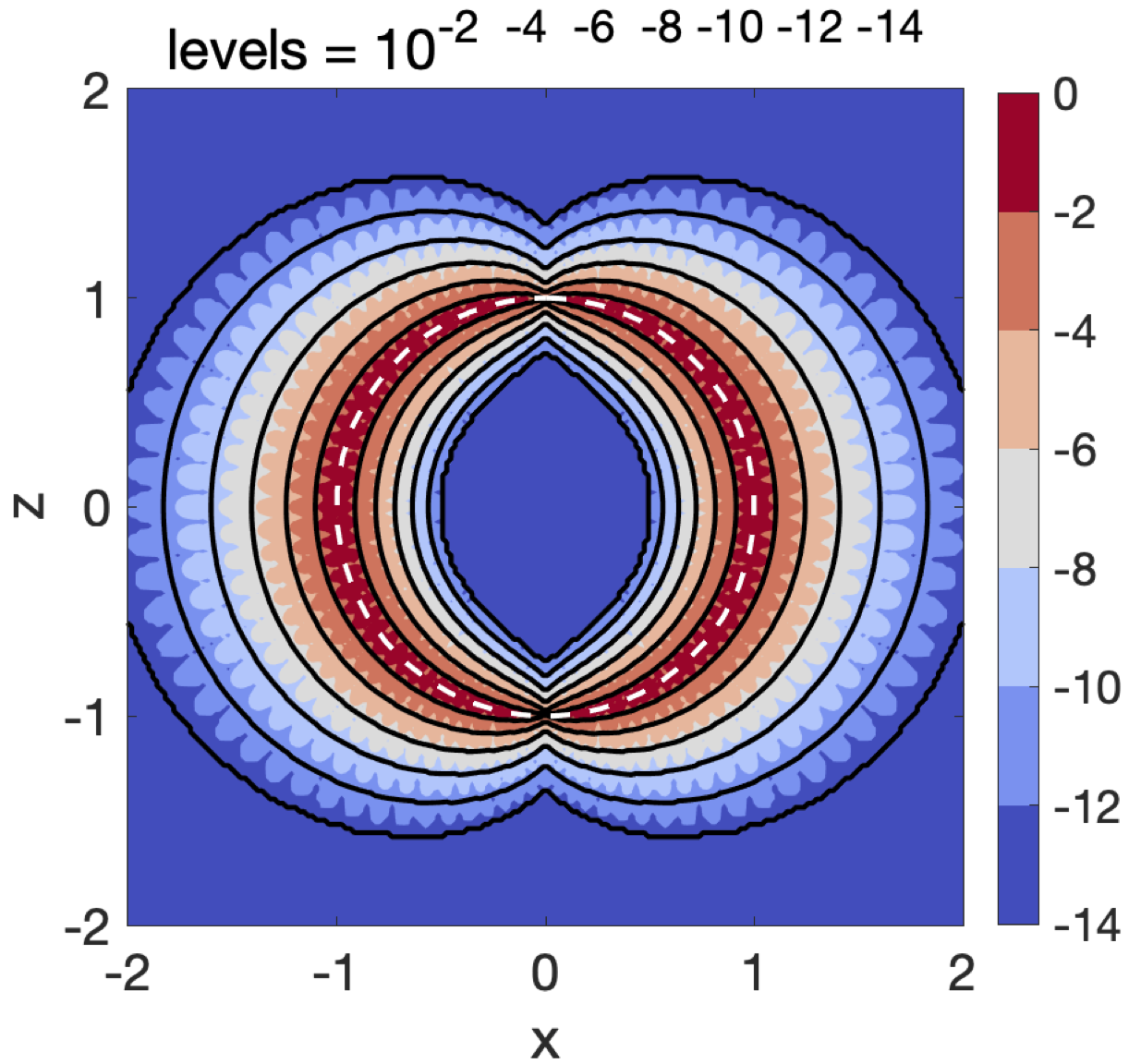}
    \caption{}
   \label{fig:fig_spherr2a}
  \end{subfigure}
   \begin{subfigure}{.43\textwidth}
    \centering
    \includegraphics[width=1\textwidth]{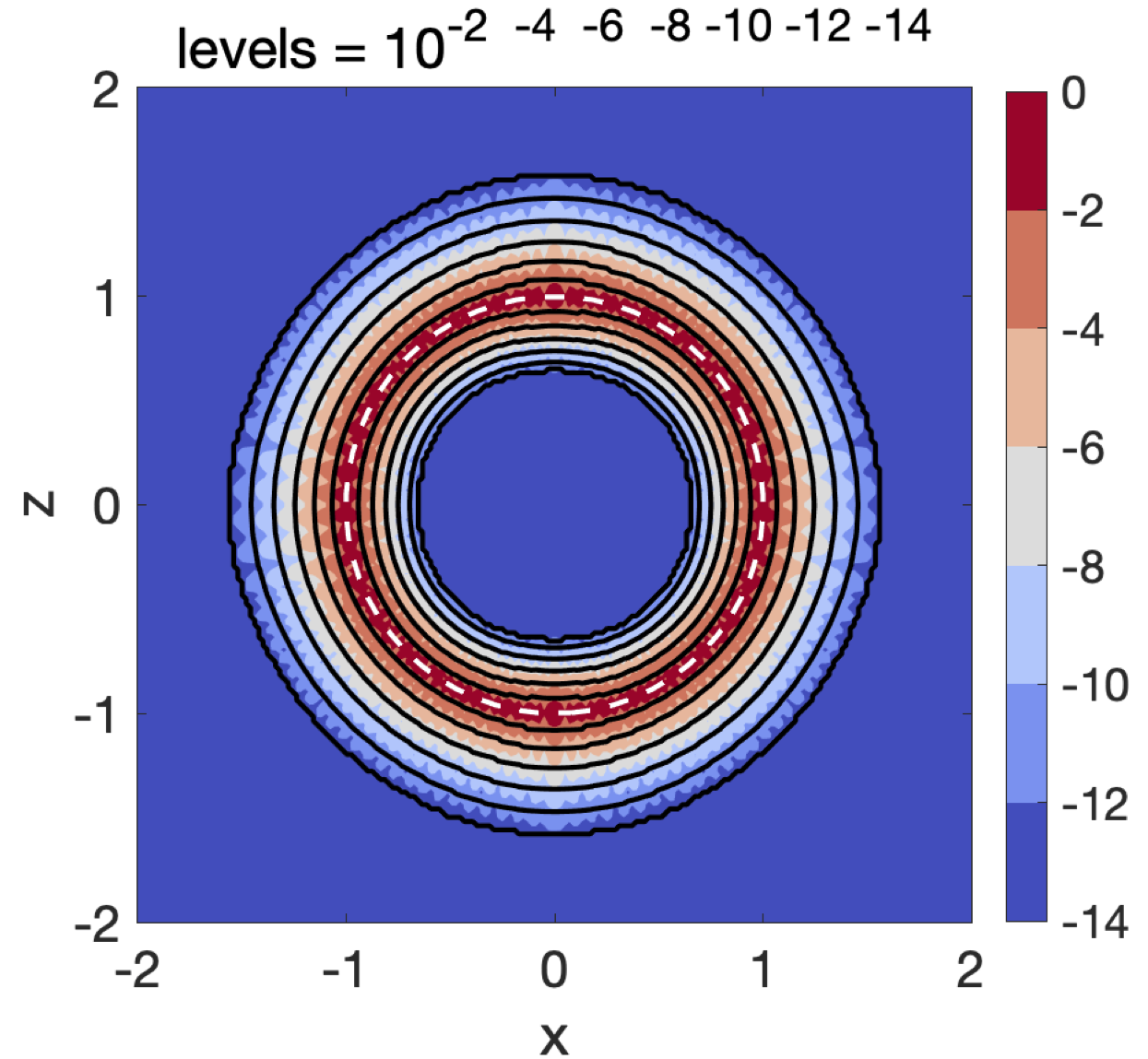}
    \caption{}
    \label{fig:fig_spherr2b}
  \end{subfigure}
\caption{Error $\EQQ$ (colors) and estimates $\Eest$ (black lines) plotted in $\log 10$ scale when computing the
  harmonic single layer potential with unit density over a sphere of radius 1 discretized by using (a) the
  linear mapping, (b) the cosine mapping.
The results are shown for evaluation points in the $xz$-plane for
$y=0$, both inside and outside of the sphere that is indicated with a dashed white line.
}
\label{fig:fig1p2}
\end{figure}
This is done for evaluation points both interior and exterior to the
sphere, and the error estimates can be seen to work well. The
contours of the estimate are smoothly enclosing the oscillatory error,
with an over-estimation that is larger further
out exterior to the sphere. Note however that the last contour is at
an error level of $10^{-14}$, which is a very low level. 

In \cref{est:Esphere}, we derived a simplified error estimate
applicable to this case (when using the cosine map) that depends only
on the radius $a$, the distance from the surface, the number of
discretization points and the half-integer $p$, where $p=1/2$ for the
integral in \eqref{eq:harmonic_sl}.  In Fig. \ref{fig:fig1p3}, we
compare the error measured in a set of evaluation points to the
simplified error estimate, both versus the distance to the surface
(here a negative distance $d$ is used for interior point) for a fixed
grid resolution, and versus the grid resolution for points at a fixed
distance to the sphere.
\begin{figure}[htbp]
  % matlab/scripts/twodim_est_tests.m
  \centering
  \begin{subfigure}{.43\textwidth}
    \centering
    \includegraphics[width=\textwidth]{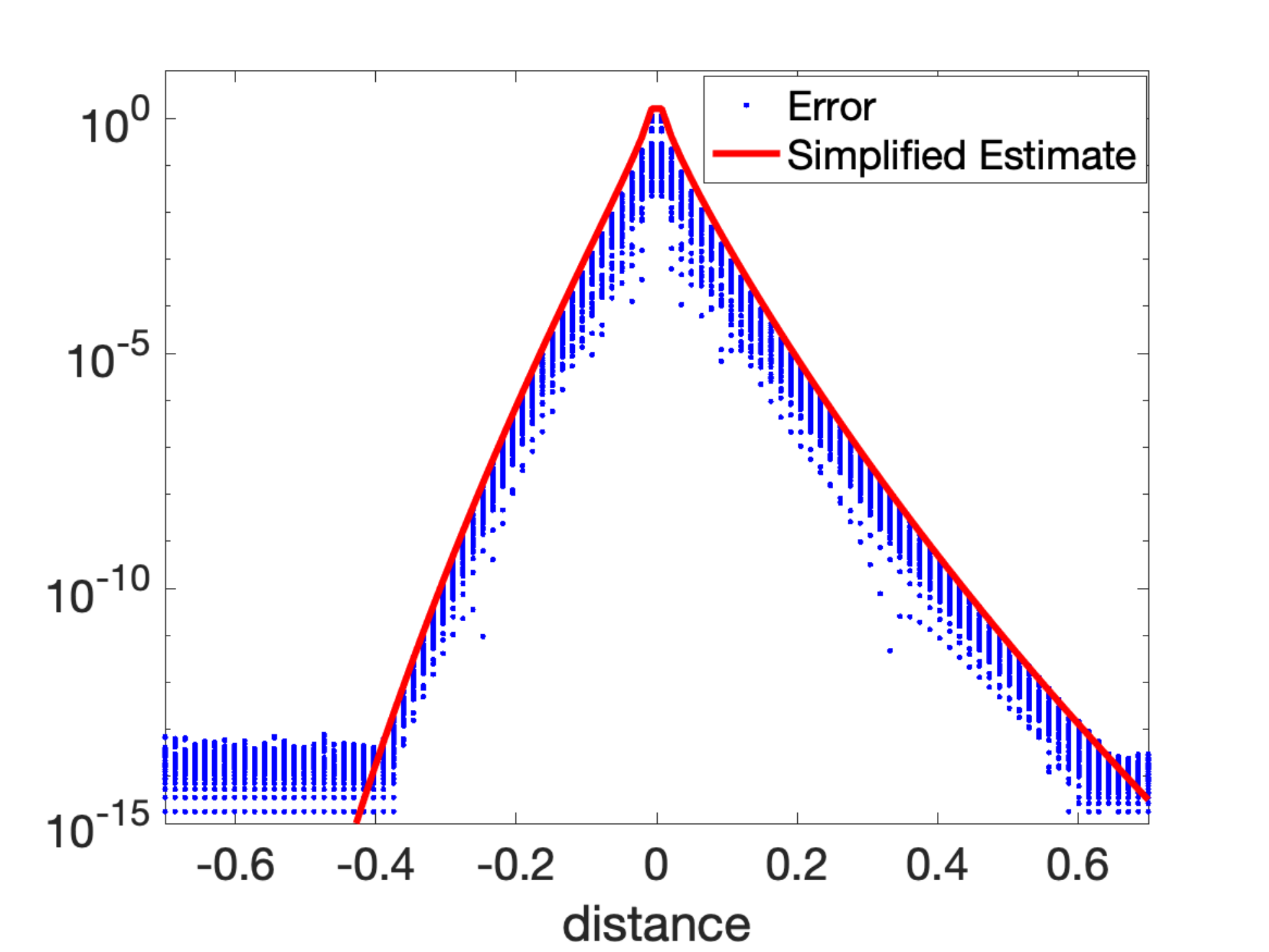}
    \caption{}
    \label{fig:fig_simpspherr2a}
  \end{subfigure}
   \begin{subfigure}{.43\textwidth}
    \centering
    \includegraphics[width=1\textwidth]{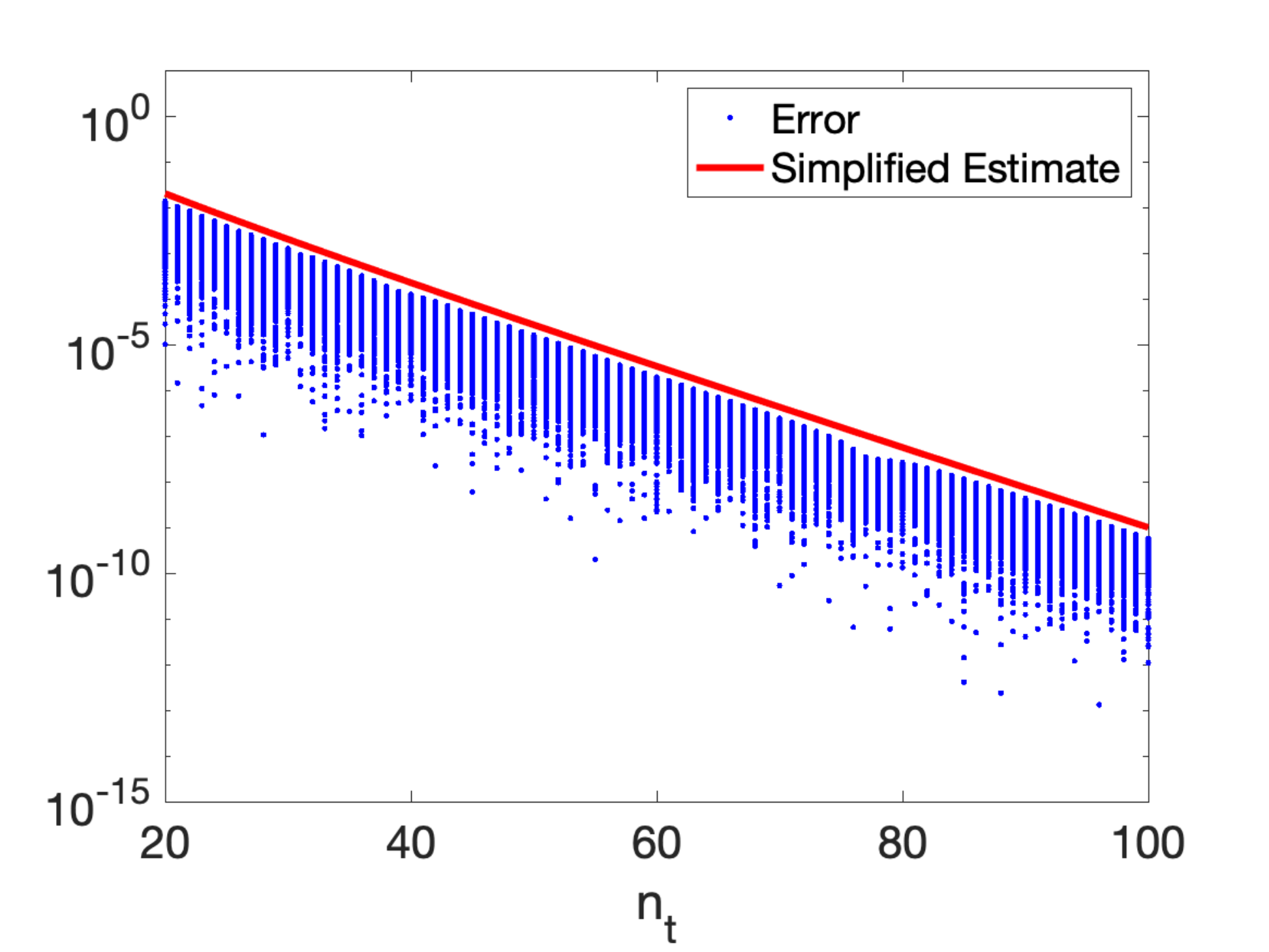}
    \caption{}
    \label{fig:fig_simpspherr2b}
  \end{subfigure}
\caption{Measured quadrature errors $\EQQ$ in evaluating
  \eqref{eq:harmonic_sl} (using the cosine map) at a set of discrete evaluation
  points $\v x \in \reals^3$ (blue dots) and the simplified estimate (red lines);
  (a) varying distance to the sphere (negative values for interior)  and
  fixed $n_t=30$ and (b) varying $n_t$ at the fixed distance
  $d=0.1$. Note that $n=n_\varphi=2n_t$ in the simplified estimate.} 
\label{fig:fig1p3}
\end{figure}
The discrete points are set using the parametrization for a sphere
with radius $(1+d)$, over the full range of the polar angle $0 \le
\theta \le \pi$, but only over an angle sector in the azimuthal angle,
$0 \le \varphi\le \pi/n_\varphi$. The actual errors do not depend on
$\varphi$, more than that there is an oscillation determined by the grid size, and
this range is sufficient to cover the range of errors. Under the
cosine-map, the error is much less dependent on $\theta$ compared to
the linear mapping, but there is a variation, and we include the full
range here. 
From the discrete dots, each representing a different
evaluation point, we can see the range of errors for evaluation points
at the same distance to the sphere. 
The simplified error estimate works better than we could expect, and
gives a rather tight upper bound of the error. 

\subsection{A prolate spheroid}
In the second example we consider an axisymmetric ellipsoid, a prolate
spheroid, with ratio 3-1 between the long and short semi axes. Here the density function is given by 
\begin{equation}
\label{eq:density1}
\sigma(\theta,\varphi)=1+\sin(6\varphi+\theta)\sin^2(\theta).
\end{equation} %1+\sin(6\varphi)\cos(\theta).
and is in Fig. \ref{fig:ell1a} visualized on the surface by the black
and white colormap. We can see how the varying density breaks the
geometric symmetry of the problem. We first consider the quadrature
error for evaluation points on a vertical wall placed at $y=1.02$,
Fig. \ref{fig:ell1a}-\ref{fig:ell2a},
and then we place random evaluation points around the spheroid
(Fig. \ref{fig:ell1b}), and plot the error vs the estimate in
Fig. \ref{fig:ell2b}. The latter is a simple way to indicate if the
estimate over or under estimate the actual error. The red line
indicates where error and estimate are equal, while the black lines
indicate where they differ by factors 10 and 1/10, respectively.

In Fig. \ref{fig:ell1a}-\ref{fig:ell2a} we are evaluating the harmonic
single layer potential, eq. \eqref{eq:harmonic_sl}. In this case
$p=1/2$ and $f(t,\varphi)= \sigma(t,\varphi) \norm{\dpd{\v\gamma}{t}
  \times \dpd{\v\gamma}{\varphi}}$, where $\sigma(t,\varphi)$ is
obtained by mapping eq. \eqref{eq:density1} with the cosine map.
In Fig. \ref{fig:ell1b}-\ref{fig:ell2b} we consider the harmonic
double layer potential
\begin{equation}
u(\v x) = \int_{\Surf} \frac{{\v n_y} \cdot  (\v y - \v x)\sigma(\v y)
  }{\norm{\v y - \v x}^{3}} \dif S(\v y),
  \label{eq:harmonic_dl}
\end{equation}
for which $p=3/2$ and
$f(t,\varphi)={\v n_\gamma(t,\varphi)} \cdot  (\v \gamma(t,\varphi) - \v x) \sigma(t,\varphi) \norm{\dpd{\v\gamma}{t} \times \dpd{\v\gamma}{\varphi}}$.\\
In the first case $n_t=40$, in the second case $n_t=60$, and for both we set $n_\varphi=2n_t$. In both cases the
estimates can predict very well the actual error. Moreover, it is clear that the density has an effect on the error, but
still the simplification made in
\eqref{eq:ETZ_GFconst}-\eqref{eq:EGL_GFconst} is good
enough to capture the behavior of the overall error.
\begin{figure}[htbp]
  \centering
  \begin{subfigure}{.43\textwidth}
    \centering
    \includegraphics[width=0.9\textwidth]{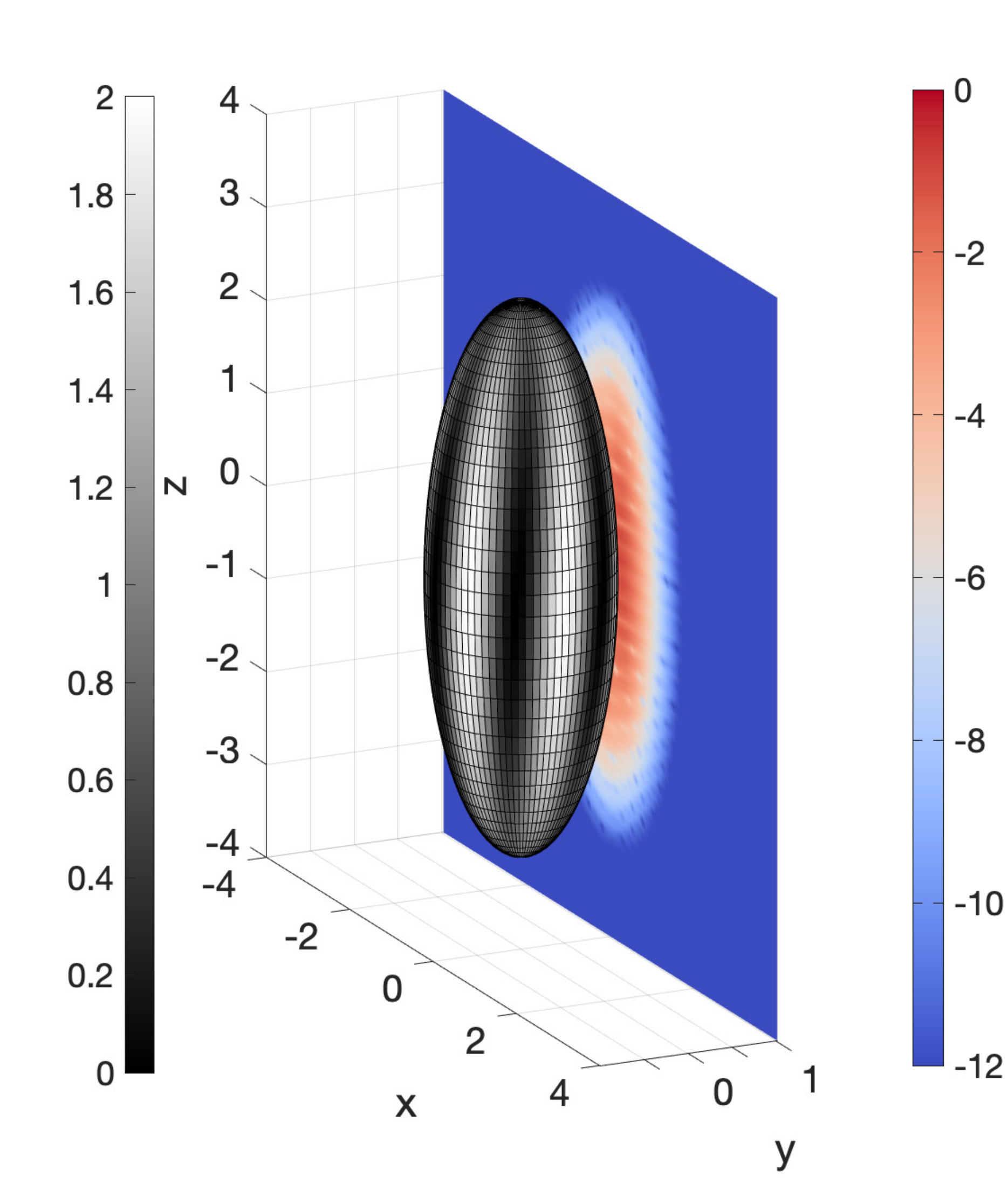}
    \caption{}
    \label{fig:ell1a}
  \end{subfigure}
   \begin{subfigure}{.43\textwidth}
    \centering
    \includegraphics[width=0.9\textwidth]{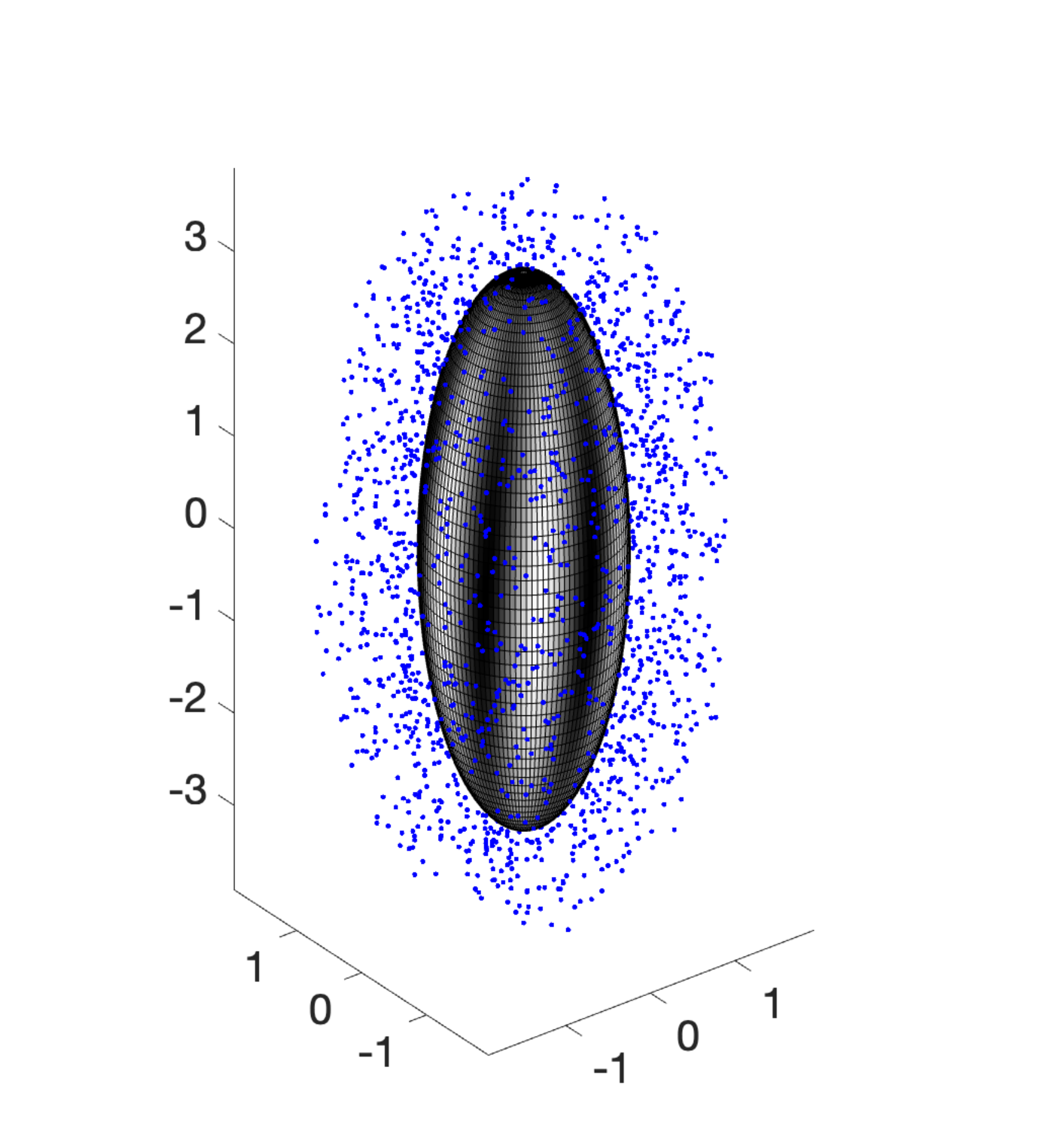}
    \caption{}
    \label{fig:ell1b}
  \end{subfigure}
\caption{Target points considered when computing the harmonic single (a) and the double (b) layer potentials evaluated near a prolate spheroid. The black and white colormap represents the
  density given by eq. \eqref{eq:density1}. The red and blue colormap represents the actual error evaluated on the target wall.}
\label{fig:fig1p5}
\end{figure}

\begin{figure}[htbp]
  \centering
  \begin{subfigure}{.45\textwidth}
    \centering
   \includegraphics[width=1\textwidth]{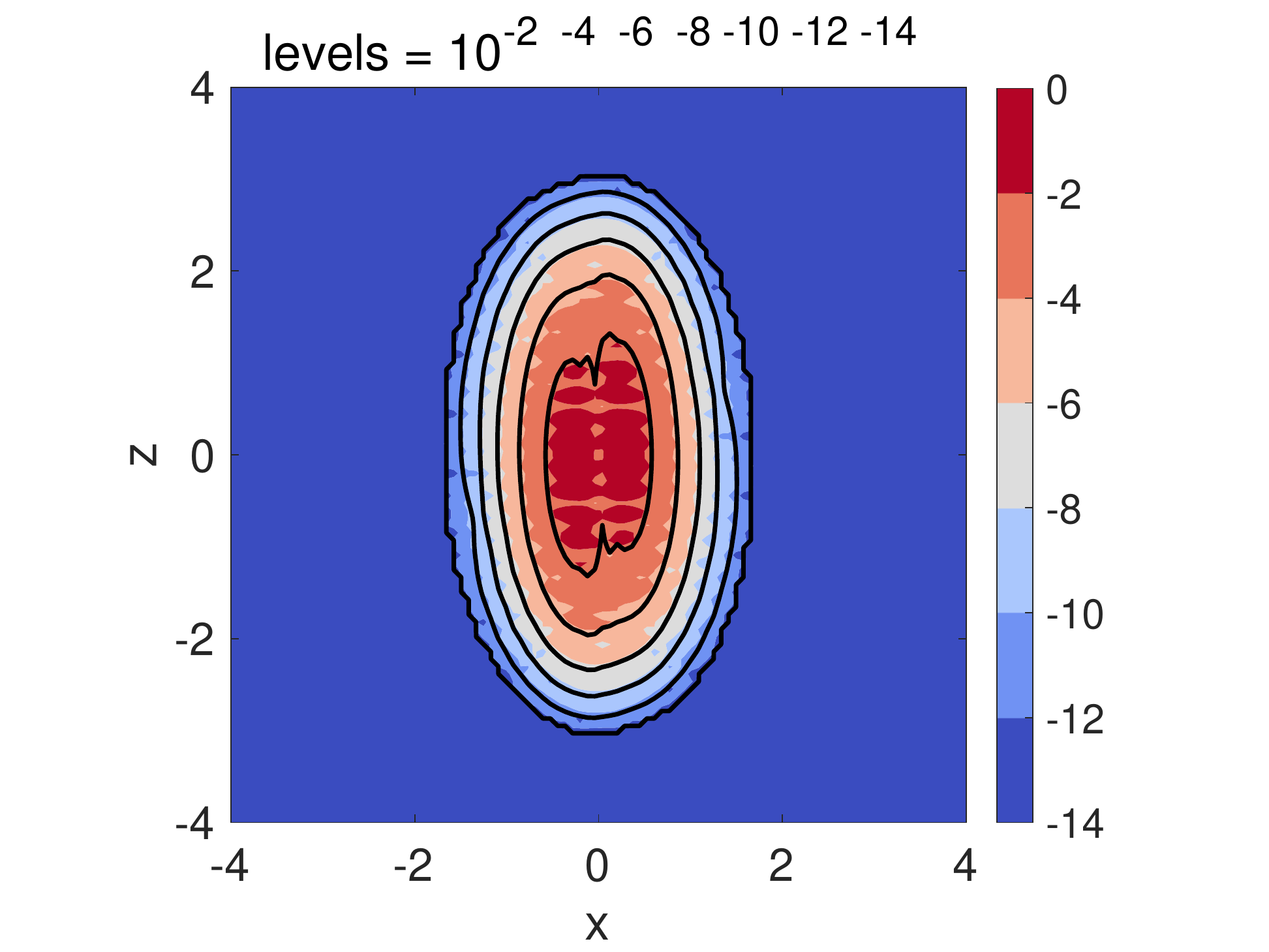}
    \caption{}
    \label{fig:ell2a}
  \end{subfigure}
   \begin{subfigure}{.45\textwidth}
   \centering
        \includegraphics[width=1\textwidth]{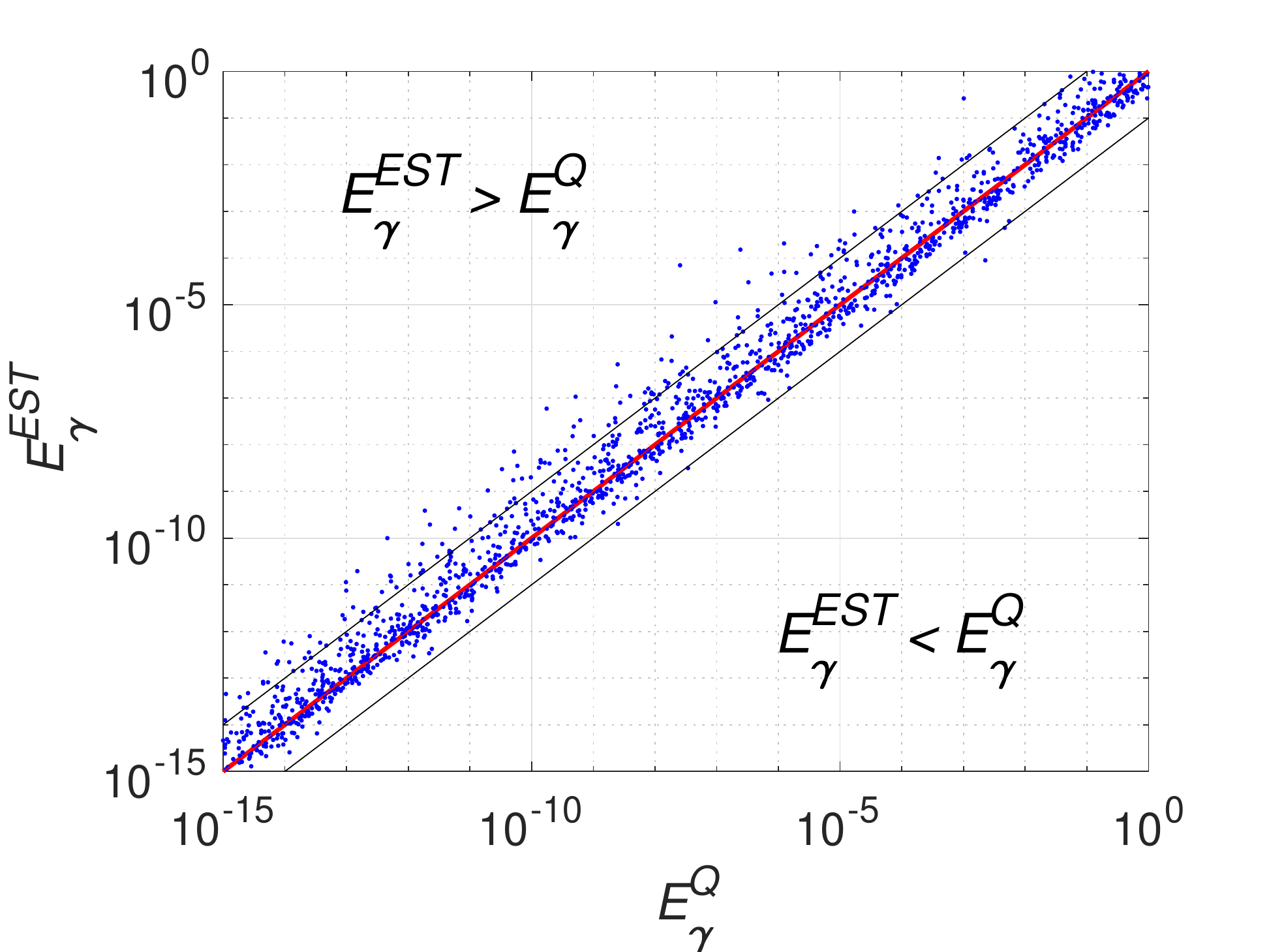}
    \caption{}
    \label{fig:ell2b}
  \end{subfigure}
\caption{(a) Error $\EQQ$ (colors) and estimates $\Eest$ (black lines) plotted in $\log 10$ scale when computing
  the harmonic single layer potential near the ellipsoid on a plane at
  $y=1.02$, shown in Fig. \ref{fig:ell1a}. (b) Estimates vs error in computing the harmonic double layer
  potential at random evaluation points showed in Fig. \ref{fig:ell1b}.
The three lines from top to bottom indicate where the estimate of
$\Eest$ is a factor of $10$, $1$ and $1/10$ times the measured value
of $\EQQ$. If the estimate was never underestimating the error, no
dot would fall below the red line.
}
\label{fig:fig1p6}
\end{figure}

\subsection{Non-axisymmetric geometry}
In the last example we consider a non-axisymmetric geometry given by
\begin{equation}
\v\gamma^\circ(\theta,\varphi)=
\begin{cases}
\rho(\theta,\varphi)\cos(\varphi)\sin(\theta)\\
\rho(\theta,\varphi)\sin(\varphi)\sin(\theta)\\
\rho(\theta,\varphi)\cos(\theta)
\end{cases}
\label{eq:nonaxisurface}
\end{equation}
with
$\rho(\theta,\varphi)=0.8+0.2e^{-3\text{Re}(Y_3^2(\theta,\varphi))}$ and $Y_{3}^{2}(\theta ,\varphi )={1 \over 4}{\sqrt {105 \over 2\pi }} e^{2i\varphi } \sin ^{2}(\theta) \cos(\theta) $.
The surface is enclosed in a spherical shell of radius $a=1.46$, as shown in Fig. \ref{fig:nonaxi_shell}. Here we consider the modified Helmholtz equation $(\Delta-\omega^2)u=0$, and compute the corresponding single layer potential:
 \begin{equation}
 u(\v x) = \int_{\Surf} \frac{e^{-\omega \norm{\v y - \v x}} \sigma(\v y)
  }{\norm{\v y - \v x}} \dif S(\v y).
\end{equation}
We use the cosine mapping and define $\v \gamma(t,\varphi)=\v\gamma^\circ(\cos^{-1}(-t),\varphi)$.
For this case, referring to eq. \eqref{eq:ETZplusEGL}, $p=1/2$ and $f(t,\varphi)= e^{-\omega \norm{\v \gamma(t,\varphi) - \v x}}\sigma(t,\varphi) \norm{\dpd{\v\gamma}{t} \times \dpd{\v\gamma}{\varphi}}$.
\begin{figure}[htbp]
\centering
    \includegraphics[width=0.5\textwidth]{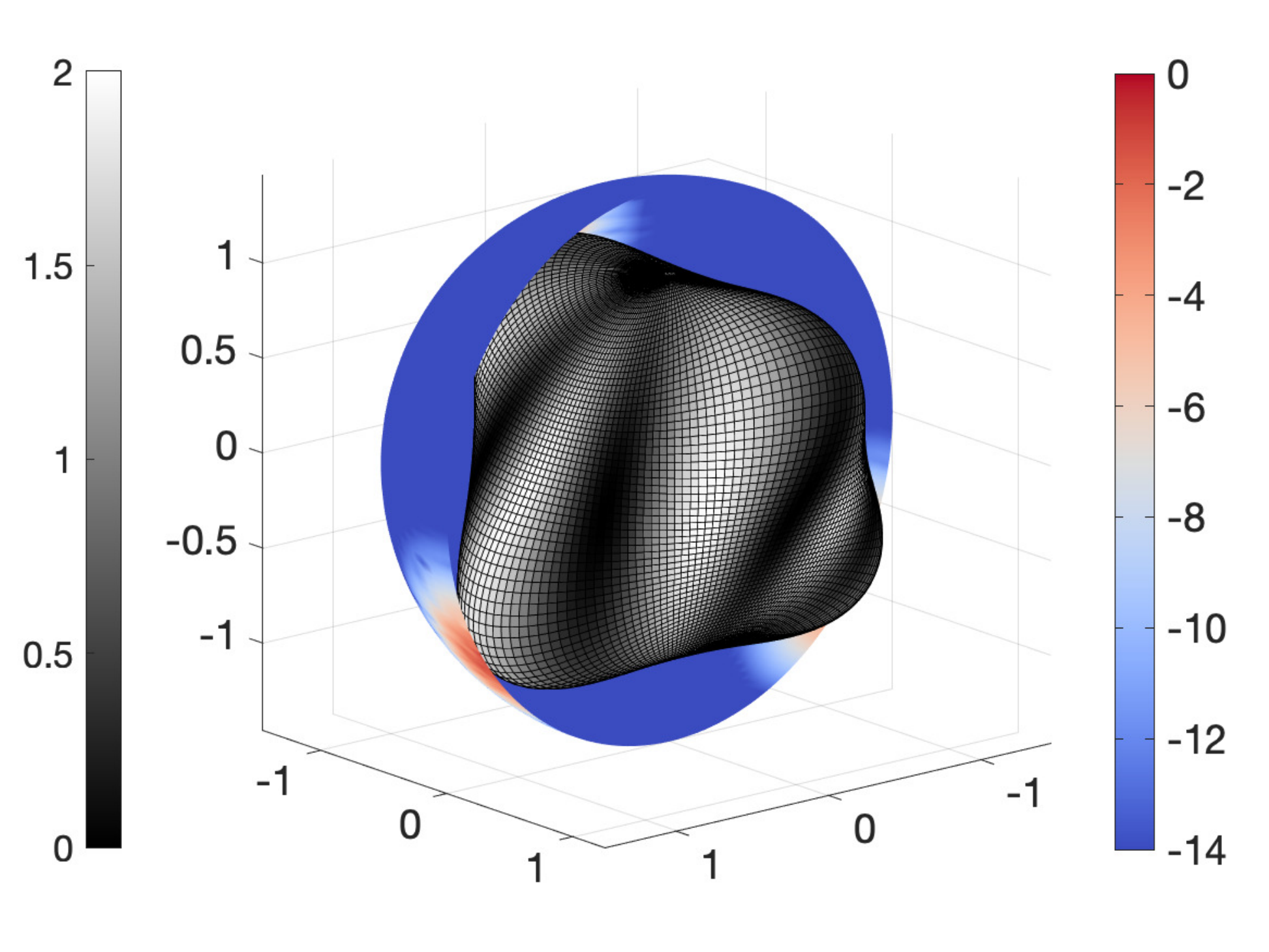}
    \caption{Half of the spherical shell enclosing the non
      axisymmetric shape defined in \eqref{eq:nonaxisurface}. The
      black and white colormap represents the density given by
      eq. \eqref{eq:density1}. }
    \label{fig:nonaxi_shell}
\end{figure}
We consider the case $\omega=3$ and the density function $\sigma$
given by eq. \eqref{eq:density1}. In Fig. \ref{fig:nonaxia} we show
the error and the estimates computed on the whole spherical shell,
plotted here with the horizontal axis being the the azimuthal angle
and the vertical axis being the polar angle. In Fig. \ref{fig:nonaxib}
we zoom in on the white rectangle highlighted in Fig. \ref{fig:nonaxia},
to better show the agreement between estimate and error.
\begin{figure}[htbp]
  % matlab/scripts/twodim_est_tests.m
  \centering
  \begin{subfigure}{.6\textwidth}
    \centering
    \includegraphics[height=3.5cm]{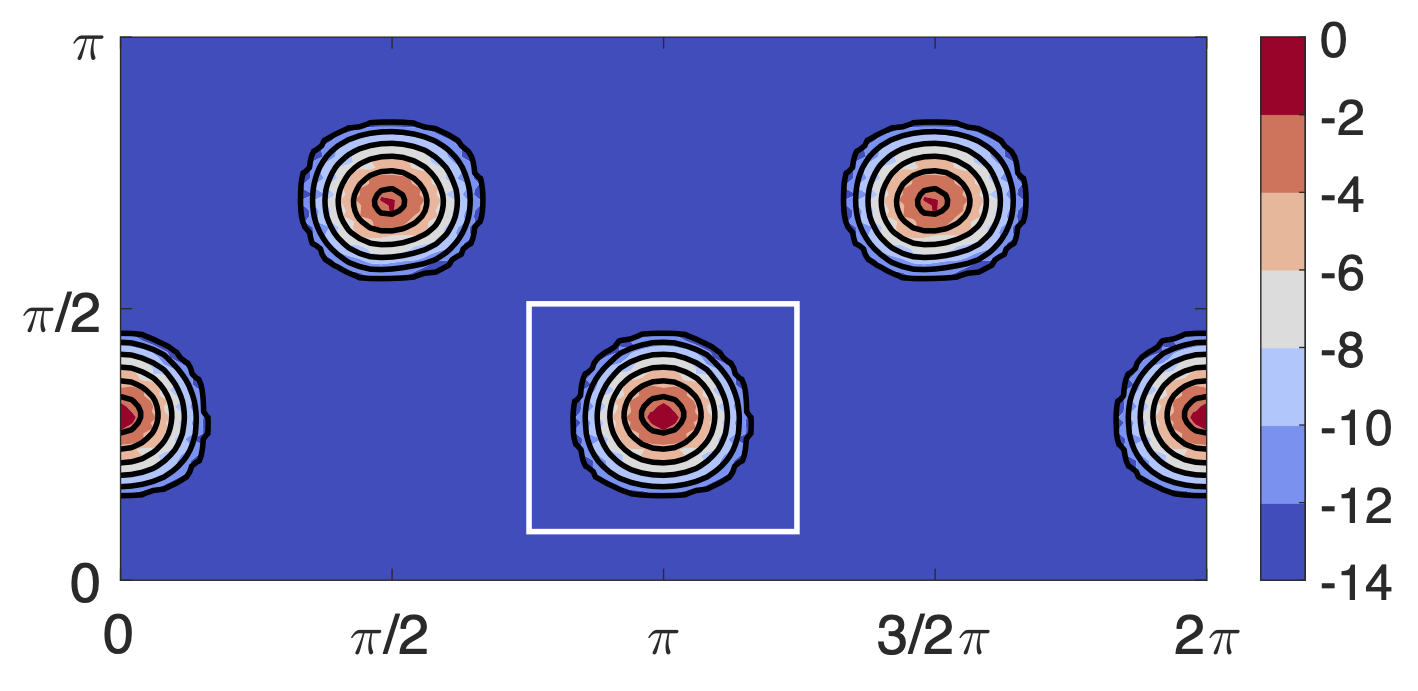}
    \caption{}
    \label{fig:nonaxia}
  \end{subfigure}
   \begin{subfigure}{.38\textwidth}
    \centering
    \includegraphics[height=3.5cm]{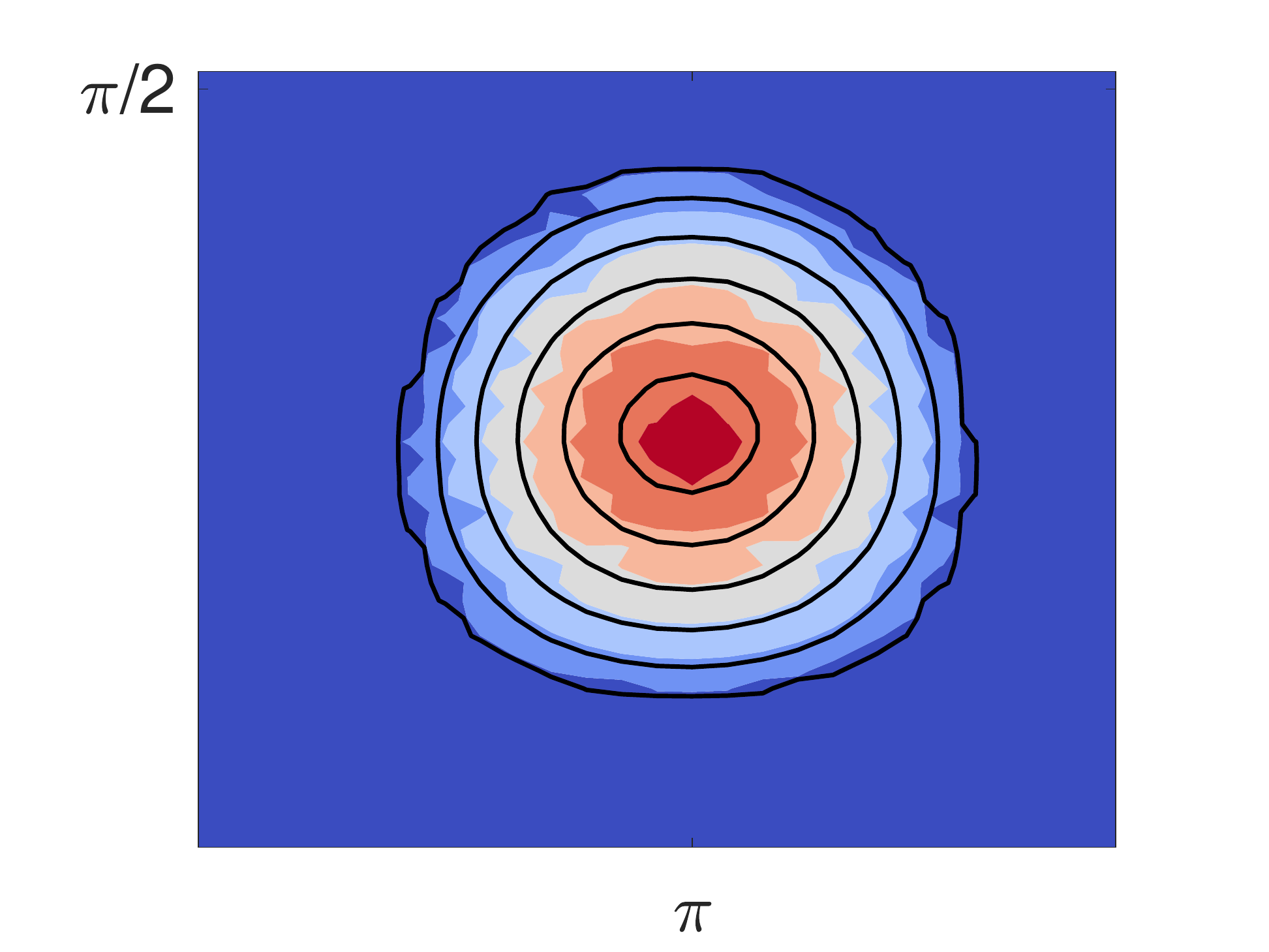}
    \caption{}
    \label{fig:nonaxib}
  \end{subfigure}
\caption{Error $\EQQ$ (colors) and estimate $\Eest$ (black lines) in computing the
  modified Helmholtz potential with $\omega=3$ on the shell enclosing
  the non axisymmetric geometry eq. \eqref{eq:nonaxisurface}. The
  levels for the estimates contours are $10^{-2}, 10^{-4}, 10^{-6},
  10^{-8}, 10^{-10}, 10^{-12}, 10^{-14}$. In (b) we zoom in on the white
  rectangle drawn in (a).
}
\label{fig:fig1p6vfvf}
\end{figure}

\section{Conclusions}
\label{sec:conclusions}

In this paper, we studied the error incurred by numerically
approximating layer potentials over surfaces of spherical topology.
We have derived error estimates for discretizations with the
trapezoidal rule in the azimuthal angle, and a Gauss-Legendre rule in a
variable that maps to the polar angle.  The framework for
the derivation of the error estimates, and the practical evaluation
there of, were introduced in \cite{AFKLINTEBERG20221} for surfaces of
genus 1, discretized by either a global trapezoidal rule in both
directions, or a panel based Gauss-Legendre rule.
Here we extended this approach with special attention to the global
parametrization. There is one component of the error estimate that
cannot be directly evaluated for evaluation points on the symmetry
axis of an axisymmetric surface. Starting by deriving analytical expressions for
the roots of a squared distance function for a sphere, we were able to
prove that this contribution vanishes at these points. We could also
derive a simplified error estimate for the sphere, that shows the
decay in error with the distance of the evaluation point to the sphere
with a simple formula.  Some analytical results were also extended to
the more general case of an axisymmetric surface, and we devised a
strategy for evaluating the error estimate for general surfaces,
avoiding the difficulties associated with the discretization around
the poles. 

The error estimate does not have any unknown coefficients, but for
each evaluation point for a general surface, two complex roots to the squared distance
function must be computed using one-dimensional root finding.
In numerical experiments, we have shown that the error estimate indeed
estimates the actual error quite well, also for moderate numbers of
discretization points. This is true for different layer potentials,
various surfaces, and with a variable layer density. The simplified
error estimate for the sphere is shown to give a tight upper bound for
the error at a given distance from the spherical surface.

\begin{appendices}
\section{Derivations for the Gauss-Legendre error} 
\label{app:GLders}
 We consider a sphere of radius $a$ and the associated
  Gauss-Legendre error as defined in \cref{eq:EGLdefwEfac} with
  $E_{fac}^{GL}(\v x,\varphi)$ defined in \cref{eq:EfacGL}.
Under the map $t=-\cos \theta$, the squared distance function 
evaluates as
\[R^2(t,\varphi,\v x)=a^2-2a(\sqrt{1-t^2}(x\cos \varphi +
  y \sin \varphi)+tz)+x^2+y^2+z^2.
\]
and $\geomfac{1}\pars{t,\varphi ,\v x}=(\partial R^2/\partial t)^{-1}$.
The root $\theta_0(\varphi,\v x)$ is given in
\cref{eqn:thetaroot_given_phi2}, and
$t_0(\varphi,\v x)=-\cos(\theta_0(\varphi,\v x))$.

We start by considering an evaluation point at the symmetry axis,
i.e. $\v x=(0,0,z)$, $z \ne a$. 
For this case we get $\theta_0=\pm i \ln(\abs{z}/a)$ as given in 
\cref{corr:thetaroot_given_phi_on_zaxis}.
We get $t_0=(\delta+1/\delta)/2$, where we let $\delta=a/\abs{z}$ if
$\abs{z}>a$, and $\delta=\abs{z}/a$ if $\abs{z}<a$, such that $\delta>1$.
Hence, $\abs{\sqrt{t_0^2-1}}=(\delta-1/\delta)/2$, and 
$\abs{t_0+\sqrt{t_0^2-1}}=\delta$.
Finally, we have $\geomfac{1}\pars{t,\varphi ,\v x}=1/(2a\abs{z})$, and combined
this yields the expression  for $E_{fac}^{GL}(\v x,\varphi)$ given in 
  \cref{eqn:EfacGLzaxis}.

  Now, we instead consider an evaluation point at the equator,
  $\v x =(0,y,0)$. We could however equally well pick $\v x= (x,0,0)$,
  or $\v x= (x,y,0)$ and would obtain the same final result
  with $\| \v x\|=\rho$.  With $\v x =(0,y,0)$ we get
\begin{align}
  E_{fac}^{GL}(\v x,\varphi) =\frac{1}{(2a\abs{y}\abs{\sin \varphi})^p}
  \frac{\abs{\sqrt{t_0(\varphi)^2-1}}}{\abs{t_0(\varphi)}^p}
\abs{ \frac{1}{t_0(\varphi) + \sqrt{t_0(\varphi)^2-1}}}^{2n+1}.
\label{eq:EfacGLsimpl}
\end{align}

The root $t_0(\varphi)=-\cos(\theta_0(\varphi))$ where
$\theta_0(\varphi)$ is defined
in \cref{lemma:thetaroot_given_phi}, in \cref{eqn:thetaroot_given_phi}.
With  $\v x =(0,y,0)$, $\lambda$ in that expression simplifies to
$\lambda=(\abs{y}/a+a/\abs{y})/(2\abs{\sin \varphi}=(\delta+1/\delta)/ (2 \abs{\sin \varphi})$
  where we let $\delta=\abs{y}/a$ if $\abs{y}>a$ and $\delta=a/\abs{y}$ if
  $\abs{y}<a$, such that $\delta>1$. 
The expression for $\lambda^2-1$ then becomes the same as in 
\cref{eqn:lambdasq_m1_tz}, but with $\varphi$ instead of $\theta$. The
peak of the error is at the closest point to $\v x=(0,y,0)$, i.e. at
$\varphi=\pi/2$, and also here, we ignore the last term in the 
expression for $\lambda^2-1$. With this we get that
$\lambda+\sqrt{\lambda^2-1} \approx \delta/\abs{\sin
\varphi}$. Introducing $\tilde{\delta}=\delta/\abs{\sin
\varphi}$, and noting that the square roots are evaluated at points
away from the branch cut, we have 
\[
  t_0=i(\tilde{\delta}-1/\tilde{\delta})/2
  \qquad \sqrt{t_0^2-1}=i(\tilde{\delta}+1/\tilde{\delta})/2
  \qquad  t_0+\sqrt{t_0^2-1}=i\tilde{\delta}.
  \]
Similarly to the derivation based on the trapezoidal error, we evaluate all
terms in  \cref{eq:EfacGLsimpl} except the last term at
$\varphi=\pi/2$. We then have

\begin{align}
  E_{fac}^{GL}(\v x,\varphi) \approx \frac{1}{2} \frac{1}{(a\abs{y})^p}
  \frac{\abs{\delta+1/\delta}}{\abs{\delta-1/\delta}^p}
\frac{\abs{\sin \varphi}^{2n+1}}{\delta^{2n+1}}.
\label{eq:EfacGLsimpl2}
\end{align}
Inserting into \cref{eq:EGLdefwEfac} and using
\[
\int_0^{2\pi} \abs{\sin \varphi}^{2n_t+1} \dif \varphi =\int_0^{2\pi}
\abs{\cos \varphi}^{2n_t+1} \dif \varphi =2 \int_{-\pi/2}^{\pi/2} (\cos \varphi)^{2n_t+1} \dif \varphi 
  \]
we can identify the integral in  \cref{eqn:cosbetaint}. With $f=a^2$ the total
result becomes $E^{GL}(\v\gamma,a^2,p,2n_t,\v x)$ as given in  \cref{eqn:sph_EGL_approx}.
\end{appendices}

%%===========================================================================================%%
%% If you are submitting to one of the Nature Portfolio journals, using the eJP submission   %%
%% system, please include the references within the manuscript file itself. You may do this  %%
%% by copying the reference list from your .bbl file, paste it into the main manuscript .tex %%
%% file, and delete the associated \verb+\bibliography+ commands.                            %%
%%===========================================================================================%%
\section*{Acknowledgments}
A.-K.T. acknowledges the support by the Swedish Research Council
 under grant no 2019-05206.

%\bibliography{sn-bibliography}% common bib file
\bibliography{library,library-local}

%% BioMed_Central_Bib_Style_v1.01

\begin{thebibliography}{19}
% BibTex style file: bmc-mathphys.bst (version 2.1), 2014-07-24
\ifx \bisbn   \undefined \def \bisbn  #1{ISBN #1}\fi
\ifx \binits  \undefined \def \binits#1{#1}\fi
\ifx \bauthor  \undefined \def \bauthor#1{#1}\fi
\ifx \batitle  \undefined \def \batitle#1{#1}\fi
\ifx \bjtitle  \undefined \def \bjtitle#1{#1}\fi
\ifx \bvolume  \undefined \def \bvolume#1{\textbf{#1}}\fi
\ifx \byear  \undefined \def \byear#1{#1}\fi
\ifx \bissue  \undefined \def \bissue#1{#1}\fi
\ifx \bfpage  \undefined \def \bfpage#1{#1}\fi
\ifx \blpage  \undefined \def \blpage #1{#1}\fi
\ifx \burl  \undefined \def \burl#1{\textsf{#1}}\fi
\ifx \doiurl  \undefined \def \doiurl#1{\url{https://doi.org/#1}}\fi
\ifx \betal  \undefined \def \betal{\textit{et al.}}\fi
\ifx \binstitute  \undefined \def \binstitute#1{#1}\fi
\ifx \binstitutionaled  \undefined \def \binstitutionaled#1{#1}\fi
\ifx \bctitle  \undefined \def \bctitle#1{#1}\fi
\ifx \beditor  \undefined \def \beditor#1{#1}\fi
\ifx \bpublisher  \undefined \def \bpublisher#1{#1}\fi
\ifx \bbtitle  \undefined \def \bbtitle#1{#1}\fi
\ifx \bedition  \undefined \def \bedition#1{#1}\fi
\ifx \bseriesno  \undefined \def \bseriesno#1{#1}\fi
\ifx \blocation  \undefined \def \blocation#1{#1}\fi
\ifx \bsertitle  \undefined \def \bsertitle#1{#1}\fi
\ifx \bsnm \undefined \def \bsnm#1{#1}\fi
\ifx \bsuffix \undefined \def \bsuffix#1{#1}\fi
\ifx \bparticle \undefined \def \bparticle#1{#1}\fi
\ifx \barticle \undefined \def \barticle#1{#1}\fi
\bibcommenthead
\ifx \bconfdate \undefined \def \bconfdate #1{#1}\fi
\ifx \botherref \undefined \def \botherref #1{#1}\fi
\ifx \url \undefined \def \url#1{\textsf{#1}}\fi
\ifx \bchapter \undefined \def \bchapter#1{#1}\fi
\ifx \bbook \undefined \def \bbook#1{#1}\fi
\ifx \bcomment \undefined \def \bcomment#1{#1}\fi
\ifx \oauthor \undefined \def \oauthor#1{#1}\fi
\ifx \citeauthoryear \undefined \def \citeauthoryear#1{#1}\fi
\ifx \endbibitem  \undefined \def \endbibitem {}\fi
\ifx \bconflocation  \undefined \def \bconflocation#1{#1}\fi
\ifx \arxivurl  \undefined \def \arxivurl#1{\textsf{#1}}\fi
\csname PreBibitemsHook\endcsname

%%% 1
\bibitem{AFKLINTEBERG20221}
\begin{barticle}
\bauthor{\bsnm{{af Klinteberg}}, \binits{L.}},
\bauthor{\bsnm{Sorgentone}, \binits{C.}},
\bauthor{\bsnm{Tornberg}, \binits{A.-K.}}:
\batitle{Quadrature error estimates for layer potentials evaluated near curved
  surfaces in three dimensions}.
\bjtitle{Computers \& Mathematics with Applications}
\bvolume{111},
\bfpage{1}--\blpage{19}
(\byear{2022}).
\doiurl{10.1016/j.camwa.2022.02.001}
\end{barticle}
\endbibitem

%%% 2
\bibitem{AfKlinteberg2016qbx}
\begin{barticle}
\bauthor{\bparticle{af} \bsnm{Klinteberg}, \binits{L.}},
\bauthor{\bsnm{Tornberg}, \binits{A.-K.}}:
\batitle{{A fast integral equation method for solid particles in viscous flow
  using quadrature by expansion}}.
\bjtitle{Journal of Computational Physics}
\bvolume{326},
\bfpage{420}--\blpage{445}
(\byear{2016}).
\doiurl{10.1016/j.jcp.2016.09.006}
\end{barticle}
\endbibitem

%%% 3
\bibitem{Corona2017504}
\begin{barticle}
\bauthor{\bsnm{Corona}, \binits{E.}},
\bauthor{\bsnm{Greengard}, \binits{L.}},
\bauthor{\bsnm{Rachh}, \binits{M.}},
\bauthor{\bsnm{Veerapaneni}, \binits{S.}}:
\batitle{An integral equation formulation for rigid bodies in {Stokes} flow in
  three dimensions}.
\bjtitle{Journal of Computational Physics}
\bvolume{332},
\bfpage{504}--\blpage{519}
(\byear{2017}).
\doiurl{10.1016/j.jcp.2016.12.018}
\end{barticle}
\endbibitem

%%% 4
\bibitem{Sorgentone2018167}
\begin{barticle}
\bauthor{\bsnm{Sorgentone}, \binits{C.}},
\bauthor{\bsnm{Tornberg}, \binits{A.-K.}}:
\batitle{A highly accurate boundary integral equation method for
  surfactant-laden drops in {3D}}.
\bjtitle{Journal of Computational Physics}
\bvolume{360},
\bfpage{167}--\blpage{191}
(\byear{2018}).
\doiurl{10.1016/j.jcp.2018.01.033}
\end{barticle}
\endbibitem

%%% 5
\bibitem{Sorgentone2022}
\begin{botherref}
\oauthor{\bsnm{Sorgentone}, \binits{C.}},
\oauthor{\bsnm{Vlahovska}, \binits{P.M.}}:
Tandem droplet locomotion in a uniform electric field.
Journal of Fluid Mechanics
\textbf{951}
(2022).
\doiurl{10.1017/jfm.2022.875}
\end{botherref}
\endbibitem

%%% 6
\bibitem{Rahimian2015766}
\begin{barticle}
\bauthor{\bsnm{Rahimian}, \binits{A.}},
\bauthor{\bsnm{Veerapaneni}, \binits{S.K.}},
\bauthor{\bsnm{Zorin}, \binits{D.}},
\bauthor{\bsnm{Biros}, \binits{G.}}:
\batitle{Boundary integral method for the flow of vesicles with viscosity
  contrast in three dimensions}.
\bjtitle{Journal of Computational Physics}
\bvolume{298},
\bfpage{766}--\blpage{786}
(\byear{2015}).
\doiurl{10.1016/j.jcp.2015.06.017}
\end{barticle}
\endbibitem

%%% 7
\bibitem{Veerapaneni2016278}
\begin{barticle}
\bauthor{\bsnm{Veerapaneni}, \binits{S.}}:
\batitle{Integral equation methods for vesicle electrohydrodynamics in three
  dimensions}.
\bjtitle{Journal of Computational Physics}
\bvolume{326},
\bfpage{278}--\blpage{289}
(\byear{2016}).
\doiurl{10.1016/j.jcp.2016.08.052}
\end{barticle}
\endbibitem

%%% 8
\bibitem{Donaldson1972}
\begin{barticle}
\bauthor{\bsnm{Donaldson}, \binits{J.D.}},
\bauthor{\bsnm{Elliott}, \binits{D.}}:
\batitle{{A unified approach to quadrature rules with asymptotic estimates of
  their remainders}}.
\bjtitle{SIAM Journal on Numerical Analysis}
\bvolume{9}(\bissue{4}),
\bfpage{573}--\blpage{602}
(\byear{1972}).
\doiurl{10.1137/0709051}
\end{barticle}
\endbibitem

%%% 9
\bibitem{Elliott2008}
\begin{barticle}
\bauthor{\bsnm{Elliott}, \binits{D.}},
\bauthor{\bsnm{Johnston}, \binits{B.M.}},
\bauthor{\bsnm{Johnston}, \binits{P.R.}}:
\batitle{{Clenshaw-Curtis and Gauss-Legendre quadrature for certain boundary
  element integrals}}.
\bjtitle{SIAM Journal on Scientific Computing}
\bvolume{31}(\bissue{1}),
\bfpage{510}--\blpage{530}
(\byear{2008}).
\doiurl{10.1137/07070200X}
\end{barticle}
\endbibitem

%%% 10
\bibitem{AfKlinteberg2016quad}
\begin{barticle}
\bauthor{\bparticle{af} \bsnm{Klinteberg}, \binits{L.}},
\bauthor{\bsnm{Tornberg}, \binits{A.-K.}}:
\batitle{{Error estimation for quadrature by expansion in layer potential
  evaluation}}.
\bjtitle{Advances in Computational Mathematics}
\bvolume{43}(\bissue{1}),
\bfpage{195}--\blpage{234}
(\byear{2017}).
\doiurl{10.1007/s10444-016-9484-x}
\end{barticle}
\endbibitem

%%% 11
\bibitem{Trefethen2014}
\begin{barticle}
\bauthor{\bsnm{Trefethen}, \binits{L.N.}},
\bauthor{\bsnm{Weideman}, \binits{J.A.C.}}:
\batitle{{The exponentially convergent trapezoidal rule}}.
\bjtitle{SIAM Review}
\bvolume{56}(\bissue{3}),
\bfpage{385}--\blpage{458}
(\byear{2014}).
\doiurl{10.1137/130932132}
\end{barticle}
\endbibitem

%%% 12
\bibitem{Barnett2014}
\begin{barticle}
\bauthor{\bsnm{Barnett}, \binits{A.H.}}:
\batitle{{Evaluation of layer potentials close to the boundary for Laplace and
  Helmholtz problems on analytic planar domains}}.
\bjtitle{SIAM Journal of Scientific Computing}
\bvolume{36}(\bissue{2}),
\bfpage{427}--\blpage{451}
(\byear{2014}).
\doiurl{10.1137/120900253}
\end{barticle}
\endbibitem

%%% 13
\bibitem{AfKlinteberg2018}
\begin{barticle}
\bauthor{\bparticle{af} \bsnm{Klinteberg}, \binits{L.}},
\bauthor{\bsnm{Tornberg}, \binits{A.-K.}}:
\batitle{{Adaptive quadrature by expansion for layer potential evaluation in
  two dimensions}}.
\bjtitle{SIAM Journal of Scientific Computing}
\bvolume{40}(\bissue{3}),
\bfpage{1225}--\blpage{1249}
(\byear{2018}).
\doiurl{10.1137/17M1121615}
\end{barticle}
\endbibitem

%%% 14
\bibitem{Palsson2019}
\begin{barticle}
\bauthor{\bsnm{P{\aa}lsson}, \binits{S.}},
\bauthor{\bsnm{Siegel}, \binits{M.}},
\bauthor{\bsnm{Tornberg}, \binits{A.-K.}}:
\batitle{{Simulation and validation of surfactant-laden drops in
  two-dimensional Stokes flow}}.
\bjtitle{Journal of Computational Physics}
\bvolume{386},
\bfpage{218}--\blpage{247}
(\byear{2019}).
\doiurl{10.1016/j.jcp.2018.12.044}
\end{barticle}
\endbibitem

%%% 15
\bibitem{Elliott2011}
\begin{barticle}
\bauthor{\bsnm{Elliott}, \binits{D.}},
\bauthor{\bsnm{Johnston}, \binits{P.R.}},
\bauthor{\bsnm{Johnston}, \binits{B.M.}}:
\batitle{{Estimates of the error in Gauss-Legendre quadrature for double
  integrals}}.
\bjtitle{Journal of Computational and Applied Mathematics}
\bvolume{236}(\bissue{6}),
\bfpage{1552}--\blpage{1561}
(\byear{2011}).
\doiurl{10.1016/j.cam.2011.09.019}
\end{barticle}
\endbibitem

%%% 16
\bibitem{Elliott2015}
\begin{barticle}
\bauthor{\bsnm{Elliott}, \binits{D.}},
\bauthor{\bsnm{Johnston}, \binits{B.M.}},
\bauthor{\bsnm{Johnston}, \binits{P.R.}}:
\batitle{{A complete error analysis for the evaluation of a two-dimensional
  nearly singular boundary element integral}}.
\bjtitle{Journal of Computational and Applied Mathematics}
\bvolume{279},
\bfpage{261}--\blpage{276}
(\byear{2015}).
\doiurl{10.1016/j.cam.2014.11.015}
\end{barticle}
\endbibitem

%%% 17
\bibitem{NIST:DLMF}
\begin{botherref}
\oauthor{\bsnm{NIST}}:
{Digital Library of Mathematical Functions}.
Release 1.0.16 of 2017-09-18.
\url{http://dlmf.nist.gov/}
\end{botherref}
\endbibitem

%%% 18
\bibitem{Mohlenkamp99}
\begin{barticle}
\bauthor{\bsnm{Mohlenkamp}, \binits{M.J.}}:
\batitle{A fast transform for spherical harmonics}.
\bjtitle{The Journal of Fourier Analysis and Applications}
\bvolume{5},
\bfpage{159}--\blpage{184}
(\byear{1999}).
\doiurl{10.1007/BF01261607}
\end{barticle}
\endbibitem

%%% 19
\bibitem{Schaeffer}
\begin{barticle}
\bauthor{\bsnm{Schaeffer}, \binits{N.}}:
\batitle{Efficient spherical harmonic transforms aimed at pseudospectral
  numerical simulations}.
\bjtitle{Geochemistry, Geophysics, Geosystems}
\bvolume{14}(\bissue{3}),
\bfpage{751}--\blpage{758}
(\byear{2013}).
\doiurl{10.1002/ggge.20071}
\end{barticle}
\endbibitem

\end{thebibliography}
%% if required, the content of .bbl file can be included here once bbl is generated
%%\input sn-article.bbl

%% Default %%
%%\input sn-sample-bib.tex%

\end{document}